\newtheorem{theorem}{Theorem}
\newtheorem{corollary}[theorem]{Corollary}
\newtheorem{definition}[theorem]{Definition}
\newtheorem{lemma}[theorem]{Lemma}
\newtheorem{proposition}[theorem]{Proposition}
\begin{document}

\title{Zeta measures  and Thermodynamic Formalism for temperature zero}

\author{Artur O. Lopes (*)\,and Jairo K. Mengue (**)}

\date{\,\, \,Instituto de Matem\'atica, UFRGS - Porto Alegre, Brasil.}

\maketitle

\begin{abstract}

We address  the analysis of the following problem: given a real H\"older potential $f$ defined on the Bernoulli  space and $\mu_f$ its equilibrium state, it is known  that this shift-invariant probability can be weakly approximated by probabilities in periodic orbits associated to certain zeta functions.

Given a H\"older function $f>0$ and a value $s$ such that $0<s<1$,
we can associate a shift-invariant probability
$\nu_{s}$ such that for each continuous function $k$ we have
\[\int\, k \,d\nu_{s}=\frac{\sum_{n=1}^{\infty}\sum_{x\in Fix_{n}}e^{sf^{n}(x)-nP(f)}\frac{k^{n}(x)}{n}}{\sum_{n=1}^{\infty}\sum_{x\in Fix_{n}}e^{sf^{n}(x)-nP(f)}},\]
where $P(f)$ is the pressure of $f$, $Fix_n$ is the set of solutions of $\sigma^n(x)=x$, for any $n\in \mathbb{N}$, and $f^{n}(x) = f(x) + f(\sigma(x)) +... + f(\sigma^{n-1} (x)).$

We call  $\nu_{s}$ a zeta probability for $f$ and $s$, because
it can be obtained in a natural way from the dynamical zeta-functions.
From the work of W. Parry and M. Pollicott it is known  that $\nu_s \to \mu_{ f}$, when $s \to 1$.
We consider for each value $c$ the  potential $c\, f$ and the corresponding equilibrium state $\mu_{c f}$. What happens with $\nu_{s}$ when $c$ goes to infinity and $s$ goes to one? This question is related to the problem of  how to approximate the maximizing probability for $f$ by probabilities on periodic orbits. We study this question and also present here  the deviation function $I$ and Large Deviation Principle for this limit $c\to \infty,\, s\to 1$.
We will make an assumption: for some fixed $L$ we have  $\lim_{c\to \infty,\,s\to 1}\, c(1-s)= L>0$. We do not assume here the maximizing probability for $f$  is unique in order to get the L.  D. P.
The deviation function described here is different from the one obtained by A. Baraviera, A. O. Lopes and Ph. Thieullen.
\vspace{0.2cm}
 
to appear in Bull. of the Bras. Math. Soc.
\vspace{0.2cm}

\end{abstract}

\maketitle

\section{Introduction}

We denote by  $X=\{1,...,d\}^\mathbb{N}$ the Bernoulli space with $d$ symbols. This is a compact metric space when one considers the usual metric
\[d(x,y) = d_{\theta}(x,y)=\theta^{N},\ \ x_{1}=y_{1},...,x_{N-1}=y_{N-1},x_{N}\neq y_{N},\]
with $\theta$ fixed $0<\theta<1$.

The results we will derive here are also true for shifts of finite type, but in order to simplify the notation, we will consider here in our proofs just case of the full Bernoulli space $X$.

We consider the Borel $\sigma-$algebra over $X$ and  denote by $\cal{ M}$ the set of invariant probabilities for the  shift. $F_{\theta}$ denotes the set of real Lipschitz functions over $X$. There is no big difference between H\"older and Lipschitz in this case (see page 16 in \cite{PP}).

Here we work with a strictly positive function  $f$ in $F_{\theta}$. We denote respectively
$$\beta(f):=\sup_{\nu \in \cal{M}}\int f d\nu,$$
$$M_{max}(f):=\{\nu \in \cal{M}: \int \text{\it{f}\,  d}\nu = \beta(\text{{\it f}\,\,})\},$$
$$h_{f}:= \sup\{h_{\nu} :\nu \in M_{max}({\it f})\}.$$

A probability $ \mu_\infty$ which $f$-integral attains the maximum value $M_{max}(f)$ will be called a $f$-maximizing probability. We refer the reader to \cite{Jenkinson1} \cite{CLT} \cite{HY} \cite{Le} \cite{BLT} \cite{Bousch1} for general properties of such probabilities.

As usual, given a real continuous function $k$ over $X$, and $x\in X$, the number $k^n(x)$ denotes
$k(x)+ k(\sigma(x))+ k(\sigma^2 (x)) +... +k(\sigma^{n-1} (x)) .$

We denote by $\text{Fix}_n$ the set of solutions $x$ to the equation $\sigma^n (x)=x$ and $P(f)$ is the pressure for $f$.

We address the reader to \cite{P1} \cite{PP} for
general properties of Thermodynamic Formalism, zeta functions and the procedure of approximating Gibbs states by probabilities with support on periodic orbits.

Following  \cite{P1} \cite{PP} (see also the last section) we consider probabilities $\mu_{c,s}$ in ${\cal M}$ such that for any continuous function $k:X \rightarrow \mathbb{R}$
\[\int\, k \,d\mu_{c,s}=\frac{\sum_{n=1}^{\infty}\sum_{x\in Fix_{n}}e^{c\,s\,f^{n}(x)-n\,P(c\,f)}\frac{k^{n}(x)}{n}}{\sum_{n=1}^{\infty}\sum_{x\in Fix_{n}}e^{c\,s\,f^{n}(x)-n\,P(c\,f)}},\]
where $c>0$, $s\in(0,1)$.

The above expression is well defined because $P(csf-P(cf))<0,$ and appear naturally when we work with the zeta function
\[\zeta(s,z)=\exp \left(\sum_{n=1}^{\infty}\frac{1}{n}\sum_{x\in Fix_{n}}e^{c\,s\,f^{n}(x)-n\,P(c\,f) + zk^{n}(x)}\right).\]

Following W. Parry and M. Pollicott \cite{PP} it is known that when $c$ is fixed and $s\to 1$, one gets that $\mu_{c,s}$ weakly converges to the Gibbs state for $cf$. We prove this here (see Lemma \ref{sto1}), but we will be really interested in analyzing $\mu_{c,s}$, when  $s\rightarrow 1$ and $c\rightarrow\infty$. Such limit is the maximizing probability $\mu_\infty$, when this is unique in $M_{max}(f)$ (see next theorem).
\bigskip

In order to simplify the notation $k$ will also represent the characteristic function of a general cylinder which will be also called $k$.

In the present setting, a Large Deviation Principle should be the identification  of a function $I:X \to \mathbb{R}$, which is non-negative, lower semi-continuous and such that, for any cylinder $k\subset X$, we have
\[\lim_{c\to\infty, \,s\to 1} \frac{1}{c}\log(\mu_{c,s}(k))=-\inf_{x\in k}I(x).\]

We point out that we will need same  care in the way we consider the limits $s\to 1$ and $c \to \infty$. We will assume that in the above limit the values $c$ and $s$ are related by some constrains (which are in a certain sense natural).

A general reference for Large Deviation properties and theorems is \cite{DZ}.

We point out that
\[P(cf)=c\beta(f)+\epsilon_{c},\]
where $\epsilon_{c}$ decreases to $h_{f}$ when $c\to \infty$ (which was defined above) \cite{CG}.

In Thermodynamic Formalism and Statistical Mechanics $c=\frac{1}{T}$, where $T$ is temperature.  In this sense, to analyze the limit behavior of Gibbs states $\mu_{cf}$ when $c\to \infty$, corresponds to analyze a system under temperature zero for the potential $f$ (see also \cite{LMST}).

It is known that there exists certain Lipschitz potentials $f$ such that the sequence $\mu_{c\, f}$
does not converge to any probability when $c \to \infty$ \cite{CH}. We will not assume the maximizing probability is unique for the potential  $f$ in order to  get  the L. D. P.

\begin{definition}
We define the function $I(x)$ in the periodic points $x\in \text{PER}$ by:
\[I(x):=n_{x}\left(\beta(f) - \frac{f^{n_{x}}(x)}{n_{x}}\right),\]
where $n_{x}$ is the minimum period of  $x$, and $f>0$ is Lipschitz.
\end{definition}

We need some properties of  $I$. We show in section 2

\begin{lemma}\label{lema}

\[\inf_{x\in \text{PER}}I(x)=0.\]
\end{lemma}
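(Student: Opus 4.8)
The plan is to establish both inequalities. First, $I(x)\ge 0$ for every periodic point $x$: if $x$ has minimal period $n_x$, then the probability $\mu_x$ equidistributed along the orbit of $x$ is shift-invariant, and $\int f\,d\mu_x = f^{n_x}(x)/n_x \le \beta(f)$ by the definition of $\beta(f)$ as a supremum over $\mathcal M$; multiplying by $n_x>0$ gives $I(x)\ge 0$. Hence $\inf_{x\in\mathrm{PER}}I(x)\ge 0$, so it remains to produce periodic points $x$ with $I(x)$ arbitrarily small.

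For the reverse inequality I would use the variational characterization of $\beta(f)$ together with a periodic-orbit approximation of invariant measures. Fix $\varepsilon>0$ and pick $\nu\in M_{\max}(f)$ (or merely $\nu\in\mathcal M$ with $\int f\,d\nu > \beta(f)-\varepsilon/2$). Since $f$ is Lipschitz on the Bernoulli space, there is $N$ such that $f$ depends, up to an error $<\varepsilon/4$, only on the first $N$ coordinates. By Birkhoff's ergodic theorem (applied to an ergodic component of $\nu$, which still lies in $\mathcal M$ and has $f$-integral $\ge$ that of $\nu$ if we pick the right component, or simply by the fact that ergodic measures are weak-$*$ dense among the relevant ones) there is a point $y$ and a length $n$ with $\frac{1}{n}f^{n}(y)$ close to $\int f\,d\nu$. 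Now close up the orbit segment $y,\sigma(y),\dots,\sigma^{n-1}(y)$: because $X$ is the full shift, the point $x$ that is the periodic repetition of the word $y_1y_2\cdots y_n$ satisfies $\sigma^{n}(x)=x$, and $\sigma^{j}(x)$ agrees with $\sigma^{j}(y)$ on the first $N$ coordinates for all $0\le j\le n-N$. Comparing $f^{n}(x)$ with $f^{n}(y)$ term by term and using the Lipschitz control of $f$, one gets $\bigl|\frac{1}{n}f^{n}(x)-\frac{1}{n}f^{n}(y)\bigr|<\varepsilon/2$ provided $n$ is large (the $N$ boundary terms contribute $O(N\|f\|_\infty/n)$). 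Hence $\frac{1}{n}f^{n}(x) > \beta(f)-\varepsilon$, and for this $x$ we have $I(x)\le n\bigl(\beta(f)-\frac{1}{n}f^{n}(x)\bigr)<n\varepsilon$ — wait, that is the wrong normalization, so one must instead say directly $I(x)\le n_x(\beta(f)-f^{n_x}(x)/n_x)=n_x\beta(f)-f^{n_x}(x)$; since $n_x\le n$ divides $n$ and $f^{n}(x)=(n/n_x)f^{n_x}(x)$, this quantity equals $\frac{n_x}{n}\bigl(n\beta(f)-f^{n}(x)\bigr)\le n\beta(f)-f^{n}(x)<n\varepsilon$. This still has a factor $n$.

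The fix — and the real content of the argument — is that one must track the \emph{per-symbol} deficit rather than the total. So I would instead quantify things as: choose $\nu\in\mathcal M$ ergodic with $\int f\,d\nu>\beta(f)-\delta$, and produce, for every large $n$, a periodic point $x$ of period dividing $n$ with $\beta(f)-\frac{1}{n}f^{n}(x)<\delta + C N/n$. Then $I(x)=n_x\bigl(\beta(f)-\frac1{n_x}f^{n_x}(x)\bigr)$, and since $f^{n_x}(x)/n_x=f^{n}(x)/n$, we get $I(x)=n_x\bigl(\beta(f)-\tfrac1n f^{n}(x)\bigr)\le n_x(\delta+CN/n)$. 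This is small only if $n_x$ is small. So the cleanest route is: take $\nu$ to be supported arbitrarily close (weak-$*$) to a periodic measure on a \emph{short} orbit with $f$-average near $\beta(f)$ — i.e. first approximate the maximizing measure weakly by a measure on a periodic orbit using the standard specification/shadowing density of periodic measures in $\mathcal M$ (valid on the full shift), giving a periodic point $x$ with $\bigl|\int f\,d\mu_x - \beta(f)\bigr|<\varepsilon/n_x$ is too much to ask; rather, periodic measures are weak-$*$ dense, so we get $x$ with $\int f\,d\mu_x>\beta(f)-\eta$ for any $\eta>0$, hence $I(x)=n_x\bigl(\beta(f)-\int f\,d\mu_x\bigr)$.

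The honest conclusion: the main obstacle is precisely that $I$ carries the factor $n_x$, so weak-$*$ approximation of the maximizing measure by periodic measures is \emph{not} by itself enough — one needs the approximation quality to beat $1/n_x$. I would resolve this by invoking the known fact (see \cite{Jenkinson1}, \cite{Bousch1}, \cite{CLT}) that $\beta(f)$ is approximated by the periodic averages $\sup_{n}\max_{x\in\mathrm{Fix}_n}\frac1n f^{n}(x)$ \emph{and} that one can extract a sequence $x^{(k)}\in\mathrm{Fix}_{n_k}$ with $n_k\bigl(\beta(f)-\tfrac{1}{n_k}f^{n_k}(x^{(k)})\bigr)\to 0$; the latter is exactly the statement that the "periodic deviation" vanishes, which follows from comparing the transfer-operator/pressure estimate $P(cf)=c\beta(f)+\epsilon_c$ with the periodic-orbit lower bound $\frac1n\log\sum_{x\in\mathrm{Fix}_n}e^{cf^n(x)}\to P(cf)$: picking the dominant periodic point for large $c$ forces $c\bigl(\beta(f)-\tfrac1n f^n(x)\bigr)$ bounded, and a diagonal choice of $c,n$ then drives $I(x^{(k)})=n_x(\beta(f)-\tfrac1{n_x}f^{n_x})$ to $0$. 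Combining with $I\ge 0$ from the first paragraph yields $\inf_{x\in\mathrm{PER}}I(x)=0$. I would present the argument in exactly this order: (1) $I\ge 0$; (2) the pressure/periodic-orbit asymptotic $P(cf)=c\beta(f)+\epsilon_c$ with $\epsilon_c\downarrow h_f$; (3) extraction of a minimizing sequence of periodic points with $I\to 0$; (4) conclude.
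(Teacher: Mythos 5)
Your opening step is fine ($I\ge 0$, since the orbital measure $\mu_x$ is invariant and so $f^{n_x}(x)/n_x\le\beta(f)$), and you correctly identify the real obstruction: $I$ is an \emph{unnormalized} deficit, so Birkhoff-plus-closing or weak-$*$ density of periodic measures only gives $\beta(f)-\tfrac1n f^n(x)=o(1)$, which says nothing after multiplication by the period. But the fix you then propose does not close the gap and is essentially circular: the ``known fact'' that one can extract $x^{(k)}\in \mathrm{Fix}_{n_k}$ with $n_k\bigl(\beta(f)-\tfrac1{n_k}f^{n_k}(x^{(k)})\bigr)\to 0$ \emph{is} the lemma to be proved, and the pressure sketch offered in its support does not survive quantification. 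From $Z_n(cf)=\sum_{x\in\mathrm{Fix}_n}e^{cf^n(x)}\le d^n e^{c\max_{\mathrm{Fix}_n}f^n}$ and $\tfrac1n\log Z_n(cf)\to P(cf)=c\beta(f)+\epsilon_c$ you do get a periodic point of period $n$ with $c\bigl(\beta(f)-\tfrac1n f^n(x)\bigr)\le\log d+\delta$, but only for $n$ beyond a threshold $N(c,\delta)$ depending on $c$; the resulting bound $I(x)\le n(\log d+\delta)/c$ is useful only in the regime $n/c\to 0$, and nothing in your argument shows $N(c,\delta)=o(c)$. In fact the natural quantitative lower bound $\log Z_n(cf)\ge nP(cf)-2c\,\mathrm{Lip}(f)\tfrac{\theta}{1-\theta}$ (concatenation of periodic words, with the closing error proportional to the Lipschitz constant of $cf$) makes $N(c,\delta)$ of order $c$, and in the regime $n\lesssim c$ it only yields a deficit bounded by the fixed constant $2\,\mathrm{Lip}(f)\tfrac{\theta}{1-\theta}$; that proves $\inf_{\mathrm{PER}}I<\infty$, not $\inf_{\mathrm{PER}}I=0$.

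The missing ingredient is a recurrence statement that controls the ergodic \emph{sum} rather than the average, and this is exactly how the paper argues: it invokes the Atkinson--Ma\~n\'e lemma (\cite{Atkinson}, \cite{Mane}) — for an ergodic $\nu\in M_{max}(f)$ and any set $A$ with $\nu(A)>0$ there exist $p\in A$ and $N$ with $\sigma^N(p)\in A$ and $\bigl|f^N(p)-N\int f\,d\nu\bigr|<\epsilon/2$. Taking $A$ to be a cylinder $k_j$ of length $j$ with $\nu(k_j)>0$ and $j$ so large that the Lipschitz shadowing error $C\theta^{j}\tfrac{\theta}{1-\theta}$ is below $\epsilon/2$, and closing the orbit segment into the periodic point $x=\overline{p_1\dots p_N}$ (the return occurs inside the length-$j$ cylinder, so $p$ and $x$ agree on the first $N+j$ symbols), one gets $\bigl|f^N(x)-N\beta(f)\bigr|\le\epsilon$, and since the minimal period of $x$ divides $N$ and $I\ge 0$, also $I(x)\le\epsilon$. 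This lemma is strictly stronger than Birkhoff's theorem (it is Atkinson's recurrence-of-cocycles result), and without it — or an equivalent, e.g. an argument through sub-actions and the Aubry set — your proposal does not prove the statement.
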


The next result is not a surprise:

\begin{theorem}\label{teo}
Suppose $f>0$ is Lipschitz.
 When $c\rightarrow\infty$ and $s\rightarrow 1$, any accumulation point of $\mu_{c,s}$ is in $M_{max}(f)$.
Moreover, if $g:X\rightarrow \mathbb{R}$ is a continuous function $c_{j}\rightarrow\infty$ and $s_{j}\rightarrow 1$ are such that there exist
\[\lim_{j\rightarrow\infty}\mu_{c_{j},s_{j}}(g),\]
then this limit is $\int g d\mu$ for some accumulation point $\mu$ of  $\mu_{c,s}$ in the weak* topology.
\newline

In particular, if $\mu_\infty$ is unique, then for any continuous  $g:X\rightarrow \mathbb{R}$
\[\lim_{c\rightarrow\infty, s\rightarrow 1}\int g\, d\mu_{c,s}=\int g d\mu_\infty.\]
\end{theorem}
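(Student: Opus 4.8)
The plan is to realise $\mu_{c,s}$ as a weighted average of periodic-orbit measures, to prove that $\int f\,d\mu_{c,s}\to\beta(f)$ in the prescribed limit, and then to deduce the rest from weak$^*$ compactness of ${\cal M}$. For $x\in Fix_n$ set $\mu_x:=\frac1n\sum_{i=0}^{n-1}\delta_{\sigma^i(x)}\in{\cal M}$, so that $\frac{k^n(x)}{n}=\int k\,d\mu_x$ for any continuous $k$. With
\[Z_{c,s}:=\sum_{n=1}^{\infty}\sum_{x\in Fix_{n}}e^{c\,s\,f^{n}(x)-n\,P(c\,f)},\qquad
w_{c,s}(x):=\frac{e^{c\,s\,f^{n}(x)-n\,P(c\,f)}}{Z_{c,s}}\,,\]
one gets $\mu_{c,s}=\sum_{n}\sum_{x\in Fix_n}w_{c,s}(x)\,\mu_x$, an average of shift-invariant probabilities, so $\mu_{c,s}\in{\cal M}$. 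Using $P(cf)=c\beta(f)+\epsilon_c$ and, for $x$ of minimal period $n_x$ with $n=m\,n_x$, the identity $n\beta(f)-f^n(x)=m\,I(x)\ge0$, I would first record the elementary rewriting
\[c\,s\,f^{n}(x)-n\,P(c\,f)=-\,c(1-s)\,n\,\beta(f)\;-\;c\,s\,m\,I(x)\;-\;n\,\epsilon_c\,,\]
which separates the three competing effects.

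The core step is $\int f\,d\mu_{c,s}\to\beta(f)$. Since $\mu_{c,s}\in{\cal M}$ we always have $\int f\,d\mu_{c,s}\le\beta(f)$, so it suffices to bound the defect $\beta(f)-\int f\,d\mu_{c,s}=\sum_n\sum_{x\in Fix_n}w_{c,s}(x)\big(\beta(f)-f^n(x)/n\big)\ge0$. Fix $\delta>0$ and split $\bigcup_n Fix_n$ into $G_\delta=\{x:\beta(f)-f^n(x)/n\le\delta\}$, whose total contribution is at most $\delta$ (and which is non-empty by Lemma \ref{lema}, since $I(x)$ small forces $\beta(f)-f^{n_x}(x)/n_x=I(x)/n_x$ small), and the complement $B_\delta$. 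On $B_\delta$ one has $f^n(x)<n(\beta(f)-\delta)$, and since $\epsilon_c\ge h_f\ge0$ and $c(1-s)n\beta(f)\ge0$, dropping these non-positive terms from the rewriting above gives $e^{c\,s\,f^{n}(x)-n\,P(c\,f)}\le e^{-c\,s\,n\,\delta}$. Because $\#Fix_n=d^n$, the numerator of $\sum_{x\in B_\delta}w_{c,s}(x)$ is at most $\sum_{n\ge1}\big(d\,e^{-cs\delta}\big)^n$, a geometric series which converges and tends to $0$ as soon as $c\,s\,\delta>\log d$, hence for $c$ large. It remains to bound $Z_{c,s}$ away from $0$; here the hypothesis $\lim c(1-s)=L>0$ is essential, because it gives
\[P(csf)-P(cf)=-\,c(1-s)\beta(f)+\big(\epsilon_{cs}-\epsilon_c\big)\longrightarrow -\,L\,\beta(f)\in(-\infty,0),\]
with $\epsilon_{cs}:=P(csf)-cs\beta(f)\to h_f$, so the free energy $P(g)$ of $g:=csf-P(cf)$ stays strictly between $-\infty$ and $0$; plugging this into the Ruelle--Perron--Frobenius / zeta-function estimates of Parry--Pollicott (\cite{P1},\cite{PP}), which control $\sum_{x\in Fix_n}e^{g^n(x)}$ by $e^{nP(g)}$, keeps $Z_{c,s}$ bounded away from $0$. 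Then $\sum_{x\in B_\delta}w_{c,s}(x)\to0$, so $\limsup(\beta(f)-\int f\,d\mu_{c,s})\le\delta$, and letting $\delta\downarrow0$ proves the claim.

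The hard part will be precisely this lower bound on $Z_{c,s}$: the Hölder constant of $g=csf-P(cf)$ grows linearly in $c$, so the spectral estimates cannot be invoked with a fixed potential, and one has to verify that the attendant constants (Gibbs constant, spectral gap) do not destroy the bound — the constraint $c(1-s)\to L>0$ is there exactly to offset this. Granting the core step, the rest is routine. If $\mu=\lim_j\mu_{c_j,s_j}$ is any weak$^*$ accumulation point of $(\mu_{c,s})$, then $\mu\in{\cal M}$ (a weak$^*$ closed set) and $\int f\,d\mu=\lim_j\int f\,d\mu_{c_j,s_j}=\beta(f)$, so $\mu\in M_{max}(f)$: this is the first assertion. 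For the second, given a continuous $g$ and $c_j\to\infty$, $s_j\to1$ with $\mu_{c_j,s_j}(g)\to\ell$, weak$^*$ compactness of ${\cal M}$ yields a subsequence with $\mu_{c_j,s_j}\to\mu$ for some accumulation point $\mu$, whence $\ell=\int g\,d\mu$. Finally, if $M_{max}(f)=\{\mu_\infty\}$ the first assertion makes $\mu_\infty$ the only accumulation point of $(\mu_{c,s})$; if $\mu_{c,s}$ did not converge weakly$^*$ to $\mu_\infty$, some sequence $\mu_{c_j,s_j}$ would stay a fixed positive distance from $\mu_\infty$ and, by compactness, admit a subsequential limit $\neq\mu_\infty$, contradicting the first assertion. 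Hence $\int g\,d\mu_{c,s}\to\int g\,d\mu_\infty$ for every continuous $g$.
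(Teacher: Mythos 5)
Your reduction of the theorem to the single claim $\int f\,d\mu_{c,s}\to\beta(f)$, and the soft part afterwards (weak$^*$ compactness of $\mathcal M$, extraction of a convergent subsequence, uniqueness giving full convergence), match the paper's structure; the paper's proof of the second assertion is just a Cantor diagonal argument over cylinders, which is the same compactness fact you invoke. The problem is the core step itself. You bound the contribution of the bad set $B_\delta$ by a geometric series in the numerator, but then you need the denominator $Z_{c,s}$ not to decay faster, and here you only gesture at the Parry--Pollicott spectral estimates ``controlling $\sum_{Fix_n}e^{g^n(x)}$ by $e^{nP(g)}$'' for $g=csf-P(cf)$ --- and you yourself concede that the Gibbs/spectral constants for $g$ blow up with the Lipschitz norm $c\|f\|_{Lip}$, so these estimates cannot be applied uniformly in $c$, and you finish only ``granting the core step''. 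That is a genuine gap: the lower bound on $Z_{c,s}$ (or on the good part of the sum) is exactly the nontrivial point, and it is left unproved. It is also not true in general that $Z_{c,s}$ stays bounded away from $0$ unless a maximizing periodic orbit exists, so the statement you are trying to establish is stronger than what is needed and possibly false as stated.

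The paper's Lemma \ref{aux} avoids all of this with an elementary device you could have used: by Lemma \ref{lema} pick \emph{one fixed} periodic point $x$ with $I(x)<\epsilon/2$ and lower-bound the good-part sum by the single term
$e^{csf^{n_x}(x)-n_xP(cf)}=e^{-csI(x)+c(s-1)n_x\beta(f)-n_x\epsilon_c}\geq e^{-cs\epsilon/2+c(s-1)n_x\beta(f)-n_x\epsilon_c}$.
Comparing this with your geometric bound $e^{-cs\epsilon+\log d}\big/\bigl(1-e^{-cs\epsilon+\log d}\bigr)$ for the bad part gives a ratio of order $e^{-c\left(s\epsilon/2+(s-1)n_x\beta(f)\right)+\log d+n_x\epsilon_c}$, and since $n_x$ is fixed (independent of $c,s$), the term $(1-s)n_x\beta(f)$ is eventually smaller than $s\epsilon/4$ as $s\to1$, so the ratio tends to $0$ with \emph{no} relation imposed between $c$ and $s$. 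Then one writes $\liminf\mu_{c,s}(f)$ as the liminf of the quotient restricted to the good part and gets $\geq\beta(f)-\epsilon$. Note in particular that your extra hypothesis $c(1-s)\to L>0$ is not in Theorem \ref{teo} and is not needed for it (it is only used later, for the large deviation Theorem \ref{ldt}); as written, your argument would prove only a weakened version of the statement even if the denominator bound were supplied. Replace the spectral-estimate step by the single-orbit lower bound and drop the constraint, and your proof becomes essentially the paper's.
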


The main result of our paper is a Large Deviation Principle for $\mu_{c,s}$:

\begin{theorem}\label{ldt}
Suppose $f>0$ is Lipschitz. Then, for any fixed $L>0$ (it is allowed $L=\infty$), and for all cylinder $k\subset X$
\[\lim_{c(1-s)\rightarrow L} \frac{1}{c}\log(\mu_{c,s}(k))=-\inf_{x\in k, \,x \in \text{Per}}I(x).\]
The same is true if we have:
\[\liminf_{c\to\infty, \, s\to1}c(1-s)=L>0.\]
\end{theorem}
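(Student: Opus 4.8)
The strategy is to obtain matching upper and lower bounds on $\frac{1}{c}\log(\mu_{c,s}(k))$ by estimating numerator and denominator of the defining expression for $\mu_{c,s}$ separately. Write $N_{c,s}(k)=\sum_{n\ge1}\sum_{x\in Fix_n}e^{c\,s\,f^n(x)-nP(cf)}\frac{k^n(x)}{n}$ for the numerator with test function the characteristic function of the cylinder $k$, and $Z_{c,s}=N_{c,s}(1)$ for the denominator. Since $P(cf)=c\beta(f)+\epsilon_c$ with $\epsilon_c\downarrow h_f$, each term carries a factor $e^{c(sf^n(x)-n\beta(f))-n\epsilon_c}=e^{-c[(1-s)f^n(x)+(n\beta(f)-f^n(x))]-n\epsilon_c}$. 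For a periodic point $x$ of minimal period $n_x$ contributing to the cylinder $k$, running over $n=m\,n_x$, the exponent of the dominant block is essentially $-c[(1-s)f^n(x)+I(\text{orbit})\cdot(n/n_x)]$. Thus as $c\to\infty$ with $c(1-s)\to L$, the leading exponential rate of the single orbit through $x$ is $-\big(I(x)+L\,\overline{f}(x)\big)$ where $\overline{f}(x)=f^{n_x}(x)/n_x$, while the counting/combinatorial factors $\frac1n$, $\epsilon_c$, and the number of periodic points of period $n$ only contribute subexponentially in $c$ (they are $O(e^{\delta c})$ for every $\delta>0$, since they do not depend on $c$; one must be a little careful with the tail in $n$, which is controlled because $(1-s)f^n(x)\ge (1-s)(\inf f) n$ forces geometric decay once $n\gtrsim 1/(1-s)$).

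The lower bound on $\mu_{c,s}(k)$ comes from keeping, in the numerator, just the contributions of one well-chosen periodic orbit inside $k$ — or a sequence of orbits inside $k$ whose $I$-values approach $\inf_{x\in k\cap\text{Per}}I(x)$ — and bounding the denominator from above by summing the same rate functional over all orbits; the key point is that $\sup_{x\in\text{Per}}(-I(x)-L\overline f(x)) = -L\beta(f)$ up to $o(1)$, using Lemma \ref{lema} ($\inf I=0$) together with $\overline f(x)\le\beta(f)$, so the denominator has rate exactly $-L\beta(f)$ (here I will need $L>0$; this is exactly where the hypothesis is used, and it is why $L=0$ is excluded). Dividing, the $L\beta(f)$ cancels against the $L\overline f(x)$ only if $\overline f(x)=\beta(f)$, i.e. the optimal orbits for the quotient are the near-maximizing ones; a short convexity/optimization argument shows $\inf_{x\in k\cap\text{Per}}\big(I(x)+L\overline f(x)-L\beta(f)\big)=\inf_{x\in k\cap\text{Per}} I(x)$, because along near-maximizing orbits the extra term $L(\overline f(x)-\beta(f))$ is $o(1)$ and may be absorbed, while for orbits far from maximizing the term $I(x)$ already dominates. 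For the upper bound on $\mu_{c,s}(k)$ one reverses the roles: bound the numerator above by the sum of the orbit rates over $x\in k\cap\text{Per}$ and the denominator below by a single near-optimal orbit (anywhere in $X$), giving the same value $-\inf_{x\in k\cap\text{Per}}I(x)$.

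Two technical points must be handled with care. First, the interchange of ``sum over $n$'' with ``limit in $c$'': I will split each sum at $n\le R/(1-s)$ for a large constant $R$ and show the tail is exponentially negligible in $c$ uniformly, using $sf^n(x)-n\beta(f)\le -\tfrac12 n\beta(f)$ (say) for $s$ near $1$ combined with $\#Fix_n\le d^n$, so the tail rate is $\le c(-\tfrac12\beta(f)+\tfrac1c\log d)<0$ for $c$ large. Second, the case $L=\infty$: here the term $L\overline f(x)$ would blow up, so instead one observes directly that both numerator and denominator are dominated (on the exponential scale in $c$, after the $c(1-s)\to\infty$ limit has been taken inside) by orbits with $\overline f(x)$ as close to $\beta(f)$ as possible, and the quotient again picks out $\inf I$; concretely, divide through by $e^{c(1-s)n\beta(f)}$-type normalizations before taking limits. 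Finally, the ``$\liminf c(1-s)=L>0$'' version follows by a standard subsequence argument: any subsequence of $(c,s)$ realizing the liminf satisfies the hypothesis of the first part (after passing to a further subsequence on which $c(1-s)$ actually converges to some $L'\ge L>0$, possibly $L'=\infty$), the limiting value $-\inf_{x\in k\cap\text{Per}}I(x)$ is the same for every such $L'$, and an arbitrary sequence is controlled by noting the estimates above only ever used $c(1-s)$ bounded away from $0$, not convergent.

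The main obstacle I anticipate is making the ``subexponential in $c$'' claims fully rigorous uniformly in $n$ — in particular, showing that the $n$-tails of numerator and denominator do not interact badly with the $c(1-s)\to L$ constraint, and that the passage to the best periodic orbit in the denominator (which a priori might require period growing with $c$) still yields exactly rate $-L\beta(f)$; this is where Lemma \ref{lema} and a careful choice of a sequence of periodic orbits whose averages $\overline f(x)\to\beta(f)$ and whose $I(x)\to 0$ simultaneously will be essential, and it may require invoking the standard fact that periodic orbit measures are weak\*-dense in $\mathcal M$ together with continuity of $f$ to produce such a sequence.
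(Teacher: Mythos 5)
Your overall architecture (lower bound by keeping a single periodic orbit inside $k$, upper bound by playing numerator against the denominator, which is just the case $k=X$) is the same as the paper's, but two of your central quantitative claims are wrong, and they sit exactly at the heart of the upper bound. First, the per-orbit rate: for a fixed periodic point $x$ the corresponding term is $e^{csf^{n_x}(x)-n_xP(cf)}=e^{-cI(x)-c(1-s)f^{n_x}(x)-n_x\epsilon_c}$, and since $c(1-s)\to L<\infty$ the middle term is $O(1)$, not $O(c)$; hence $\frac1c\log$ of this term tends to $-I(x)$, not to $-(I(x)+L\overline{f}(x))$. The $L\overline{f}(x)$ contribution lives at scale $O(1)$ and is invisible at the large-deviation scale $1/c$. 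Consequently the denominator has rate $0$ (by Lemma \ref{lema}, $\inf_{\text{PER}}I=0$), not $-L\beta(f)$, and the cancellation identity you invoke, $\inf_{x\in k\cap \text{Per}}\bigl(I(x)+L\overline{f}(x)-L\beta(f)\bigr)=\inf_{x\in k\cap \text{Per}}I(x)$, is false: since $\overline{f}(x)-\beta(f)=-I(x)/n_x$, the left-hand side equals $\inf_{x\in k\cap\text{Per}} I(x)\bigl(1-L/n_x\bigr)$, which can be strictly smaller than $\inf I$ and even negative (take a fixed point in $k$ with $I(x)>0$ and $L>1$). So the ``short convexity/optimization argument'' you plan cannot exist; the premise it rests on is already incorrect.

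Second, your tail control does not close. The inequality $sf^n(x)-n\beta(f)\le-\tfrac12 n\beta(f)$ fails precisely for near-maximizing orbits (there $sf^n-n\beta(f)\approx-(1-s)n\beta(f)$), and with only $\#\text{Fix}_n\le d^n$ plus the $(1-s)$-decay, the terms with large $n$ are of size $e^{n(\log d-c(1-s)|f|_{-})}$, which is not summable when $L|f|_{-}\le\log d$. This is exactly the difficulty the paper's upper bound is built to handle, and it is where $\liminf c(1-s)=L>0$ really enters: after extracting the factor $e^{-c(1-\delta)I}$, where $I=\inf_{k\cap\text{Per}}I$ (using that $n\beta(f)-f^n(y)\ge I$ whenever the orbit of $y\in\text{Fix}_n$ meets $k$), one keeps a $\delta$-fraction of the deficit together with the term $c(1-s)n|f|_{-}\ge 2\psi n|f|_{-}$ and dominates the remaining sum by $\sum_{n}\sum_{\text{Fix}_n}e^{c_0\delta f^n-nP(c_0\delta f)-n\psi|f|_{-}}$, which converges because $P\bigl(c_0\delta f-P(c_0\delta f)-\psi|f|_{-}\bigr)=-\psi|f|_{-}<0$; this pressure comparison controls entropy and weights simultaneously and uniformly in $c\ge c_0$, yielding $\mu$-numerator $\le e^{-c(1-\delta)I}T$. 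Your lower-bound step and the subsequence reduction for the $\liminf$ version are fine (and, as in the paper, the lower bound needs only $c\to\infty$, $s\to1$), but without the correct per-orbit rate and a pressure-based (or equivalent) control of the sum over all periods, the upper bound is not established.
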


We going to extend $I$ (which was defined just for periodic orbits) to $\widetilde{I}$, defined on the all set $X$, which preserve the infimum of $I$ in  each cylinder, and, which is also lower semi-continuous and non-negative (see sections 4 and 5).

Finally, we can get the following result:
\begin{corollary}
Suppose $f>0$ is Lipschitz. Then, there is a Large Deviation Principle with deviation function $\widetilde{I}$:
for fixed $L>0$, and for any cylinder  $k\subset X$
\[\lim_{c(1-s)\rightarrow L}\frac{1}{c}\log \mu_{c,s}(k) = - \inf_{x\in k}\widetilde{I}(x),\]
 where $\widetilde{I}$ is lower semi-continuous and non-negative.
 \newline
 The same is true if we have:
\[\liminf_{c\to\infty, \, s\to1}c(1-s)=L>0.\]
\end{corollary}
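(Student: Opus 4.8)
The plan is to derive the Corollary from Theorem \ref{ldt} by constructing the extension $\widetilde{I}$ and checking that passing from $I$ to $\widetilde{I}$ does not change any of the infima that appear in the limits. First I would recall that Theorem \ref{ldt} already gives, for every cylinder $k$,
\[
\lim_{c(1-s)\to L}\frac{1}{c}\log\mu_{c,s}(k)=-\inf_{x\in k,\,x\in\mathrm{Per}}I(x),
\]
so the only thing the Corollary adds is a genuine function $\widetilde{I}:X\to\mathbb{R}$ (defined everywhere, not merely on periodic points), which is non-negative and lower semi-continuous, and for which $\inf_{x\in k}\widetilde{I}(x)=\inf_{x\in k\cap\mathrm{Per}}I(x)$ for every cylinder $k$. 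Given the sentence preceding the Corollary, $\widetilde{I}$ is the extension built in Sections 4 and 5; in the proof I would simply invoke that construction and record its three properties: (i) $\widetilde{I}\ge 0$, (ii) $\widetilde{I}$ is lower semi-continuous on $X$, and (iii) $\inf_{x\in k}\widetilde{I}(x)=\inf_{x\in k,\,x\in\mathrm{Per}}I(x)$ for every cylinder $k$. The natural formula to use is
\[
\widetilde{I}(y):=\sup_{k\ni y}\ \inf_{x\in k,\,x\in\mathrm{Per}}I(x),
\]
the supremum being over all cylinders $k$ containing $y$; this is the standard "lower semicontinuous regularization through the cylinder filtration'' and it is immediate that it is non-negative (since $I\ge 0$ by Lemma \ref{lema} and the definition of $I$, as $\beta(f)\ge f^{n_x}(x)/n_x$) and that for a periodic point it returns a value $\ge I(y)$; one must still check it does not overshoot, which is where the density of periodic points and the Lipschitz control on $f^{n}$ enter.

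The key steps, in order, are: (1) verify property (iii), namely that for each cylinder $k$ the sup-inf formula satisfies $\inf_{y\in k}\widetilde{I}(y)=\inf_{x\in k\cap\mathrm{Per}}I(x)$ — the inequality $\ge$ is trivial because taking $k$ itself as the test cylinder in the sup shows $\widetilde{I}(y)\ge\inf_{x\in k\cap\mathrm{Per}}I(x)$ for all $y\in k$, and the inequality $\le$ follows because for a periodic $x\in k$ one has $\widetilde I(x)\le I(x)$ (every cylinder containing $x$ still contains $x\in\mathrm{Per}$, so the inner infima are all $\le I(x)$); (2) verify lower semicontinuity: $\{y:\widetilde{I}(y)>\alpha\}$ is open because if $\widetilde I(y)>\alpha$ then some cylinder $k\ni y$ has $\inf_{x\in k\cap\mathrm{Per}}I(x)>\alpha$, and then every $y'\in k$ also satisfies $\widetilde I(y')\ge\inf_{x\in k\cap\mathrm{Per}}I(x)>\alpha$, so $k$ is an open neighborhood of $y$ inside the superlevel set; (3) non-negativity is inherited directly from $I\ge 0$; (4) finally, combine (iii) with Theorem \ref{ldt} to get
\[
\lim_{c(1-s)\to L}\frac{1}{c}\log\mu_{c,s}(k)=-\inf_{x\in k,\,x\in\mathrm{Per}}I(x)=-\inf_{y\in k}\widetilde{I}(y),
\]
which is exactly the claimed Large Deviation Principle; and (5) observe that the alternative hypothesis $\liminf_{c\to\infty,s\to1}c(1-s)=L>0$ is handled identically because Theorem \ref{ldt} already covers it, so nothing new is needed.

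The main obstacle is step (1), specifically the inequality showing $\widetilde{I}$ does not exceed the periodic infimum in a way that would break the identity $\inf_{y\in k}\widetilde I(y)=\inf_{x\in k\cap\mathrm{Per}}I(x)$; concretely one needs that the sup-inf regularization of $I$ restricted to periodic orbits still "sees'' the correct infimum on every cylinder, which hinges on the fact that cylinders form a neighborhood basis of each point and that $\mathrm{Per}$ is dense, together with the uniform Lipschitz estimates on the Birkhoff sums $f^{n}$ that were presumably established in Sections 2 and 4 in order to prove Lemma \ref{lema} and control $I$. If instead the construction of $\widetilde I$ in Sections 4–5 is a different one (for instance obtained via an explicit calibrated sub-action or via $\widetilde I(y)=\lim_{n}\inf\{I(x): x\in\mathrm{Per}, \ d(x,y)<\theta^n\}$), the proof is structurally the same: cite that $\widetilde I$ is lsc, non-negative, and agrees with $I$ on cylinder-infima, then quote Theorem \ref{ldt}. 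I would therefore keep the write-up short — one paragraph invoking the Section 4–5 construction and its three properties, one paragraph doing the two-line substitution into Theorem \ref{ldt}, and a closing remark that the $\liminf$ version follows verbatim.
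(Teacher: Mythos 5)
Your proposal is correct, and its overall shape — build a non-negative, lower semi-continuous $\widetilde I$ on all of $X$ whose infimum over every cylinder equals $\inf_{k\cap \mathrm{Per}}I$, then substitute into Theorem \ref{ldt} (which already covers the $\liminf$ variant) — is the same as the paper's. The difference lies in how the key lemma is verified. The paper defines $\widetilde I(x)=\lim_{\epsilon\to 0}\inf\{I(y):d(y,x)\le\epsilon\}$, which on the Bernoulli space coincides with your supremum over cylinders containing $x$ (closed balls are exactly the cylinders $[x_1\ldots x_m]$); but it first proves the stronger pointwise statement $\widetilde I=I$ on $\mathrm{Per}$, via Lipschitz estimates showing that near a periodic $x$ with $I(x)\neq 0$ all other periodic points have arbitrarily large $I$, and then obtains $\inf_k\widetilde I\ge\inf_{k\cap\mathrm{Per}}I$ through a compactness/lower-semicontinuity argument with a minimizing sequence. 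Your verification is more elementary and entirely formal: the inequality $\ge$ is immediate because $k$ itself is one of the cylinders in the supremum defining $\widetilde I(y)$ for $y\in k$, and $\le$ follows from the trivial bound $\widetilde I\le I$ on $\mathrm{Per}$; no Lipschitz control, density argument, or compactness is needed, and your open-superlevel-set proof of lower semicontinuity is clean. What your route buys is brevity; what it gives up is the pointwise identity $\widetilde I\vert_{\mathrm{Per}}=I$ (you only establish $\widetilde I\le I$ there), which the Corollary as stated does not need, but which the paper uses elsewhere (e.g. in the support corollaries and in the comparison with $I_{BLT}$). One small slip in your overview paragraph: you say the sup-inf formula returns a value $\ge I(y)$ at periodic points and worry about "overshooting" via density of periodic points and Lipschitz bounds on $f^n$ — the immediate inequality is in fact $\widetilde I(y)\le I(y)$, and, as your own step (1) shows, nothing more is required; note also that every cylinder of the full shift contains a periodic point, so the infima in question are finite.
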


In  \cite{BLT} it is assumed that the maximizing probability $\mu_\infty$ is unique.
The equilibrium probabilities $\mu_{c\, f}$ for the real Lipschitz potential $cf$ converge to $\mu_\infty$ and it is presented in \cite{BLT} a L.D.P. for such setting (a different deviation function). The deviation function $I_{BLT}$ in that paper is lower semi-continuous but can attains the value $\infty$ in some periodic points. Under the assumption  $\lim_{c\to \infty,\,s\to 1}\, c(1-s)= L>0$, we can show that the deviation rates in cylinders described here by $I$ are different from the ones in \cite{BLT} which are described by $I_{BLT}$. This is  described in  section 5 Proposition 15.
Finally, Proposition $\ref{zero}$ in section 6 shows that if $c(1-s)\rightarrow 0$ with a certain speed, then $\mu_{c,s}$ have a L.D.P., but the deviation function is $I_{BLT}$ (not  $I$).  We want to present a sufficient  analytic estimate that allows  one to find $s_c$ as a function of $c$ in such way this happens.

In the last section we also study the invariant probabilities $\pi_{c,N}$ and $\eta_{c,N}$ given respectively by

$$
\int k d\pi_{c,N}= \frac{\sum_{n=1}^{N}\sum_{x\in Fix_{n}}e^{c\,f^{n}(x)-n\,P(c\,f)}\frac{k^{n}(x)}{n}}{\sum_{n=1}^{N}\sum_{x\in Fix_{n}}e^{c\,f^{n}(x)-n\,P(c\,f)}},$$
and
$$\int k d\eta_{c,N}= \frac{\sum_{n=1}^{N}\sum_{x\in Fix_{n}}e^{c\,f^{n}(x)}\frac{k^{n}(x)}{n}}{\sum_{n=1}^{N}\sum_{x\in Fix_{n}}e^{c\,f^{n}(x)}},$$

where $c>0$, $N\in \mathbb{N}$.

We show that when $N \to \infty$ these probabilities converge weakly to $\mu_{cf}$ and when $c,N \to \infty$ they satisfies a result analogous to Theorem \ref{teo}, with small modifications on the proof. Also, if $N/c \to 0$, then $\pi_{c,N}$ have a L.D.P. which the same deviation function $\widetilde{I}$ above.

We point out that it follows from the methods we describe in this paper the following property:
given $f, \, f_{\lambda} \in F_{\theta}$, such that $f_{\lambda}\to f$ uniformly when $\lambda\to \infty$, suppose
there exist the weak$^*$ limit
$$\lim_{j\rightarrow\infty} \pi_{c_{j},N_{j},f_{\lambda_{j}}}= \nu,$$
$c_{j},N_{j},f_{\lambda_{j}}\to \infty$, then $\nu$ is a maximizing measure for $f$. Moreover, if we take first $N\to \infty$ and after $c_j, \lambda_j\to \infty$, then we get: any weak$^*$ accumulation point of of $\mu_{c_\lambda f_\lambda}$ is a maximizing probability for $f$.

In particular given $f$ one can consider for each $m$ a $f_m$ approximation which depend just on the first $m$ coordinates as in Proposition 1.3  \cite{PP}. We point out that for each $f_m$ the eigenvalues, eigenvectors, pressure,  etc... can be obtained via the classical Perron Theorem for positive  matrices \cite{PP} \cite{Sp}.

In the same way, if when $c_{j} ,\lambda_{j}\rightarrow \infty$, $s_j \to 1$, there exists the limit
 $$ \lim_{j\rightarrow\infty}\mu_{c_{j}f_{\lambda_{j}},s_j}=\nu,$$
 then $\nu$ is maximizing for $f$.

This work is part of the thesis dissertation of the second author in Prog. Pos-Grad. Mat. - UFRGS.

\section{Proof of Lemma \ref{lema}}

We want to show that
\[\inf_{x\in \text{PER}}I(x)=0.\]

We will need the following lemma (\cite{Atkinson} \cite{Mane}):
\begin{lemma}

Given a Borel measurable set $A$, a continuous $f:X \to \mathbb{R}$, and an ergodic probability $\nu$, with $\nu(A)>0$, there exists $p\in A$ such that for all $\epsilon>0$, there exists an integer $N>0$ which satisfies $\sigma^{N}(p)\in A$ and
\[\left|\sum_{i=0}^{N-1}f(\sigma^{i}(p)) - N\int f d\nu \right|<\epsilon.\]

The set of such $p\in A$ has full measure.
\end{lemma}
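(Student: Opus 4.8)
The plan is to run Atkinson's recurrence argument through a skew product. First I would reduce to the mean-zero case: replacing $f$ by $\tilde f := f - \int f\,d\nu$ turns the desired estimate $\bigl|f^N(p) - N\!\int f\,d\nu\bigr| < \epsilon$ into $\bigl|\tilde f^{\,N}(p)\bigr| < \epsilon$, with the same $N$, so we may assume $\int f\,d\nu = 0$. Now consider the skew product
\[
S \colon X\times\mathbb{R}\to X\times\mathbb{R},\qquad S(x,t) = (\sigma x,\ t + f(x)),
\]
which preserves the $\sigma$-finite product measure $\nu\times m$ ($m$ = Lebesgue measure on $\mathbb{R}$) and satisfies $S^n(x,t) = (\sigma^n x,\ t + f^n(x))$. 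Fix $\epsilon>0$ and set $B_\epsilon := A\times(-\epsilon/2,\epsilon/2)$, which has positive finite $\nu\times m$-measure because $\nu(A)>0$. The set
\[
B_0 := \{\, y\in B_\epsilon : S^n y\notin B_\epsilon \text{ for all } n\ge 1 \,\}
\]
is wandering for $S$ (if $y\in B_0\cap S^{-j}B_0$ for some $j\ge 1$, then $S^j y\in B_0\subseteq B_\epsilon$, contradicting $y\in B_0$). Hence, granting that $S$ is conservative, $\nu\times m(B_0)=0$; that is, for $\nu\times m$-a.e.\ $(x,t)\in B_\epsilon$ there is $N\ge 1$ with $S^N(x,t)\in B_\epsilon$, which unwinds to $\sigma^N x\in A$ and $|f^N(x)|<\epsilon$. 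By Fubini this holds for $\nu$-a.e.\ $x\in A$, and intersecting these full-measure subsets of $A$ over $\epsilon=1/k$, $k\in\mathbb{N}$, gives one full-measure set of points $p\in A$ that work for every $\epsilon$ --- which is the assertion.

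So the whole matter reduces to the conservativity of $S$, and this is where the real work is. I would argue by contradiction: if $S$ is not conservative it has a wandering set $W$ with $0<\nu\times m(W)<\infty$, and intersecting with a horizontal strip we may take $W\subseteq X\times[-R,R]$. Write $\alpha := \nu\times m(W)>0$ and $g(x) := m(W_x)$ for the vertical slice $W_x=\{t:(x,t)\in W\}$, so $g\in L^1(\nu)$ with $\int g\,d\nu=\alpha$. The crux is that, for a fixed $x$, the translated slices $W_{\sigma^n x} - f^n(x)$, $0\le n<N$, are pairwise disjoint in $\mathbb{R}$ --- a common point would force some $S^n(x,t)\in W\cap S^{-k}W = \emptyset$ --- and are all contained in the interval $[-R-M_N(x),\,R+M_N(x)]$, where $M_N(x):=\max_{0\le n<N}|f^n(x)|$. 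Since $m$ is translation invariant, this yields
\[
\sum_{n=0}^{N-1} g(\sigma^n x)\ =\ \sum_{n=0}^{N-1} m\bigl(W_{\sigma^n x}\bigr)\ \le\ 2R + 2M_N(x).
\]
Dividing by $N$ and letting $N\to\infty$: the left side converges to $\int g\,d\nu = \alpha>0$ for $\nu$-a.e.\ $x$ by Birkhoff's ergodic theorem (here $\nu$ is ergodic), whereas the right side converges to $0$, because $f^n(x)/n\to 0$ $\nu$-a.e.\ (Birkhoff again, now using $\int f\,d\nu=0$) forces $M_N(x)/N\to 0$. This contradiction proves $S$ conservative, and with it the lemma.

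The remaining ingredients are routine and I would not dwell on them: the $S$-invariance and $\sigma$-finiteness of $\nu\times m$; the elementary fact that $a_n/n\to 0$ implies $\max_{n<N}a_n/N\to 0$; and the Fubini and countable-intersection bookkeeping. One technical point worth a sentence: since $\sigma$ is the non-invertible one-sided shift, ``wandering set'' should be read in its non-invertible form (the sets $S^{-n}W$, $n\ge 0$, pairwise disjoint), under which $W\cap S^{-k}W=\emptyset$ and both uses of Birkhoff's theorem still apply verbatim; alternatively one passes to the natural extension. This argument is precisely the cited theorem of Atkinson and Mañé, so within the paper one may simply quote it.
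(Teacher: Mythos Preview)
Your argument is correct and self-contained; it is exactly the Atkinson--Ma\~n\'e recurrence argument run through the skew product $S(x,t)=(\sigma x,\,t+f(x))$, with conservativity deduced from the Birkhoff theorem and the disjointness of the translated slices $W_{\sigma^n x}-f^n(x)$. The bookkeeping (Fubini, the reduction to mean zero, the non-invertible reading of ``wandering set'', and the $M_N(x)/N\to 0$ step) is all handled properly.

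By contrast, the paper does not prove this lemma at all: it simply states it and cites \cite{Atkinson} and \cite{Mane}. So you are supplying strictly more than the paper does. Within the logic of the article this is unnecessary --- the authors treat the result as a known black box and only need its statement to feed into the proof of Lemma~\ref{lema} --- but your write-up is a faithful and compact account of the cited references, and would make the exposition self-contained at essentially no cost. If you keep it, you might shorten the closing paragraph: the remark that ``within the paper one may simply quote it'' already signals that the proof is optional.
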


Now we will present the proof of Lemma $\ref{lema}$

\begin{proof} $M_{max}(f)$ is a compact convex set which contains at least one ergodic probability $\nu$.
\newline
Then,
\[\beta(f)=\int f d\nu \ \ \ \ \text{and} \ \ \ \ I(x) = |f^{n_{x}}(x)-n_{x}\int fd\nu|. \]

It is enough to show that for any  $\epsilon>0$, there exists $x\in \text{PER}$ such that
\[I(x) = |f^{n_{x}}(x)-n_{x}\int fd\nu|<\epsilon.\]

As $f\in F_{\theta}$, there exists a constant $C>0$ such that for any  $x,y \in X$:
\[|f(x)-f(y)|<Cd(x,y).\]

We fix $j$ such that
\[C\theta^{j}\frac{\theta}{1-\theta}<\epsilon/2.\]

 There exists a cylinder $k_{j}$ of size $j$ such that $\nu(k_{j})>0$. Using the last lemma with $A=k_{j}$ we are able to get a point $p\in k_{j}$ and an integer $N>0$ such that $\sigma^{N}(p) \in k_{j}$ and
\[ \left|\sum_{i=0}^{N-1}f(\sigma^{i}(p)) - N\int f d\nu \right|<\epsilon/2.\]

 It follows that $p$ is of the form $p=p_{1}...p_{N}p_{1}...p_{j}...$. Now if we consider the periodic point $x$ given by   repeating successively  the word
\[x=\overline{p_{1}...p_{N}},\]
then we get

\begin{align*}
\left|\sum_{i=0}^{N-1}f(\sigma^{i}(p)) - \sum_{i=0}^{N-1}f(\sigma^{i}(x))\right| &\leq \sum_{i=0}^{N-1} \left|f(\sigma^{i}(p)) - f(\sigma^{i}(x))\right| \\
&\leq C(d_{\theta}(p,x) +...+ d_{\theta}(\sigma^{N-1}(p),\sigma^{N-1}(x)))\\
&\leq C(\theta^{N+j} +...+ \theta^{j})\\
&< C\theta^{j}(1+\theta+...)\\
&= C\theta^{j}\frac{\theta}{1-\theta} < \epsilon/2.
\end{align*}

It follows that
\begin{align*}
I(x) &= \left|\sum_{i=0}^{n_{x}-1}f(\sigma^{i}(x)) - n_{x}\int f d\nu \right|\\
&\leq \left|\sum_{i=0}^{N-1}f(\sigma^{i}(x)) - N\int f d\nu \right|\\
&\leq \left|\sum_{i=0}^{N-1}f(\sigma^{i}(p)) - \sum_{i=0}^{N-1}f(\sigma^{i}(x))\right| + \left|\sum_{i=0}^{N-1}f(\sigma^{i}(p)) - N\int f d\nu \right|\\
&\leq \epsilon. \ \ \ \ \ \
\end{align*}
\end{proof}
%%%%%%%%%%%%%%%%%%%%%%%%%%%%%%%%%%%%%
%%%%%%%%%%%%%%%%%%%%%%%%%%%%%%%%%%%%%%%%%%%
%%%%%%%%%%%%%%%%%%%%%%%%%%%%%%%%%%%%%%%%%%
\section{Proof of Theorem $\ref{teo}$}

We begin with an auxiliary lemma:

\begin{lemma}\label{aux}
Suppose $f>0$ is Lipschitz. Then,
\begin{equation}
\liminf_{c\rightarrow\infty, s\rightarrow 1}\mu_{c,s}(f) \geq \beta(f). \label{1}
\end{equation}
\end{lemma}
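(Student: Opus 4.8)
The plan is to write $\mu_{c,s}(f)$ as a weighted average of the orbit averages $a_{n}(x):=\frac{f^{n}(x)}{n}$ over periodic points, namely
\[\mu_{c,s}(f)=\frac{\sum_{n\geq 1}\sum_{x\in Fix_{n}}e^{w_{n}(x)}\,a_{n}(x)}{\sum_{n\geq 1}\sum_{x\in Fix_{n}}e^{w_{n}(x)}},\qquad w_{n}(x):=c\,s\,f^{n}(x)-n\,P(c\,f),\]
and to show that, as $c\to\infty$ and $s\to1$, this weight concentrates on periodic points whose average is close to $\beta(f)$. Fix $\delta>0$; it suffices to prove $\liminf_{c\to\infty,s\to1}\mu_{c,s}(f)\geq\beta(f)-\delta$, since $\delta$ is arbitrary. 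Since $f>0$ we have $a_{n}(x)>0$, so splitting the numerator according to whether $a_{n}(x)\geq\beta(f)-\delta$ (``good'') or $a_{n}(x)<\beta(f)-\delta$ (``bad'') and discarding the bad part of the numerator gives
\[\mu_{c,s}(f)\geq(\beta(f)-\delta)\,(1-r_{c,s}),\qquad r_{c,s}:=\frac{D_{\mathrm{bad}}}{D},\]
where $D:=\sum_{n,x}e^{w_{n}(x)}$ (finite and positive, by the remark following the definition of $\mu_{c,s}$) and $D_{\mathrm{bad}}$ is the part of this sum over bad points. Everything reduces to proving $r_{c,s}\to0$.

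For the denominator I would retain a single term. By Lemma \ref{lema}, fix once and for all a periodic point $x_{0}$ of minimal period $m_{0}$ with $I(x_{0})\leq\delta/2$, equivalently $a_{m_{0}}(x_{0})\geq\beta(f)-\delta/(2m_{0})$. Writing $P(c\,f)=c\,\beta(f)+\epsilon_{c}$ with $\epsilon_{c}$ bounded (it decreases to $h_{f}$), we get
\[D\;\geq\;e^{w_{m_{0}}(x_{0})}\;=\;e^{-c\,m_{0}\,\beta(f)(1-s)\;-\;c\,s\,I(x_{0})\;-\;m_{0}\,\epsilon_{c}}.\]
For the bad part I would use only the trivial bound $|Fix_{n}|=d^{n}$ together with the defining inequality $f^{n}(x)<n(\beta(f)-\delta)$ on bad points (and $c\,s>0$): each bad term in $Fix_{n}$ satisfies $e^{w_{n}(x)}<e^{\,n(-c\,\beta(f)(1-s)-c\,s\,\delta-\epsilon_{c})}$, so that, for $c$ large enough that $q:=e^{\log d-c\,\beta(f)(1-s)-c\,s\,\delta-\epsilon_{c}}<1$,
\[D_{\mathrm{bad}}\;\leq\;\sum_{n\geq 1}d^{\,n}\,e^{\,n(-c\,\beta(f)(1-s)-c\,s\,\delta-\epsilon_{c})}\;=\;\sum_{n\geq 1}q^{\,n}\;=\;\frac{q}{1-q}.\]
Dividing the two estimates,
\[r_{c,s}\;\leq\;(1-q)^{-1}\,e^{\,\log d\;+\;c\,\beta(f)(1-s)(m_{0}-1)\;+\;c\,s\,(I(x_{0})-\delta)\;+\;(m_{0}-1)\epsilon_{c}}.\]

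The only delicate point is that this last bound must tend to $0$ for the \emph{unconstrained} double limit $c\to\infty,\ s\to1$, where a priori $c(1-s)$ may tend to $\infty$. The key observation is that $c(1-s)=o(c)$ whenever $s\to1$ (indeed $c(1-s)/c=1-s\to0$), so the dangerous term $c\,\beta(f)(1-s)(m_{0}-1)$ in the exponent is $o(c)$; on the other hand $c\,s\,(I(x_{0})-\delta)\leq-\tfrac{1}{2}\,c\,s\,\delta$ is negative of order exactly $c$, the terms $\log d$, $(m_{0}-1)\epsilon_{c}$ and $-\log(1-q)$ are $O(1)$, and $q\to0$ so $(1-q)^{-1}$ stays bounded. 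Dividing the whole exponent by $c$ therefore yields a lim sup at most $I(x_{0})-\delta\leq-\delta/2<0$, so the exponent tends to $-\infty$ and $r_{c,s}\to0$. Hence $\liminf_{c\to\infty,s\to1}\mu_{c,s}(f)\geq\beta(f)-\delta$, and $\delta\downarrow0$ gives the lemma. (Conversely, since each $a_{n}(x)$ is the $f$-average over the orbit of $x$, we always have $a_{n}(x)\leq\beta(f)$ and hence $\mu_{c,s}(f)\leq\beta(f)$; combined with the lemma this actually identifies the limit as $\beta(f)$, though only the lower bound is needed here.)
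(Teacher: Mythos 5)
Your proof is correct and follows essentially the same route as the paper's: the same splitting of $Fix_n$ according to whether the orbit average exceeds $\beta(f)-\delta$, the same geometric-series bound using $|Fix_n|\le d^n$ for the bad part, the same lower bound from a single near-maximizing periodic point supplied by Lemma \ref{lema} together with $P(cf)=c\beta(f)+\epsilon_c$, and the same observation that the term involving $c(1-s)$ is harmless in the unconstrained double limit. The only cosmetic difference is that you discard the bad part of the numerator using $f>0$ and normalize by the full denominator, while the paper compares the bad and good sums (both with and without the weight $f^n/n$); the substance of the argument is identical.
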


\begin{proof}
We write
\[P(cf)=c\beta(f) + \epsilon_{c},\]
where $\epsilon_{c}$ decrease to $h_{f}$ when $c\to \infty$ \cite{CG}.

Fix $\epsilon>0$.
We define
\[A_{n}=\{x\in Fix_{n}: \frac{f^{n}(x)}{n} < \beta(f) - \epsilon\},\]
\[B_{n}=\{x\in Fix_{n}: \frac{f^{n}(x)}{n} \geq \beta(f) - \epsilon\}.\]

It follows that for $c>>0$:
\begin{align*}
\sum_{n=1}^{\infty}\sum_{x\in A_{n}}e^{csf^{n}-nP(cf)} &\leq \sum_{n=1}^{\infty}\sum_{x\in A_{n}}e^{csn(\beta(f) - \epsilon)-nc\beta(f) -n\epsilon_{c}}\\
&=  \sum_{n=1}^{\infty}\sum_{x\in A_{n}}e^{nc(s-1)\beta(f) - ncs\epsilon -n\epsilon_{c}}\\
&\leq \sum_{n=1}^{\infty}\sum_{x\in A_{n}}e^{-ncs\epsilon}\\
&\leq  \sum_{n=1}^{\infty}e^{- ncs\epsilon + n\log(d)}\\
&= \frac{e^{- cs\epsilon + \log(d)}}{1-e^{- cs\epsilon + \log(d)}},
\end{align*}
and, with a similar reasoning,
\[\sum_{n=1}^{\infty}\sum_{x\in A_{n}}e^{csf^{n}-nP(cf)}\frac{f^{n}}{n} \leq \frac{e^{- cs\epsilon + \log(d)}}{1-e^{- cs\epsilon + \log(d)}}\left(\beta(f) - \epsilon\right).\]

By the other side, by lemma $\ref{lema}$, there exists a periodic point $x$ such that:
$$I(x)=n_{x}\left(\beta(f) - \frac{f^{n_{x}}(x)}{n_{x}}\right)<\epsilon/2.$$
Therefore,
\begin{align}
\sum_{n=1}^{\infty}\sum_{x\in B_{n}}e^{csf^{n}-nP(cf)} &\geq e^{csf^{n_{x}}(x)-n_{x}P(cf)} \nonumber \\
&=  e^{csf^{n_{x}}(x)-n_{x}c\beta(f) -n_{x}\epsilon_{c}} \label{2.1}\\
&= e^{-csn_{x}\left(\beta(f) - \frac{f^{n_{x}}(x)}{n_{x}}\right)+c(s-1)n_{x}\beta(f) -n_{x}\epsilon_{c}} \label{2.2}\\
&= e^{-csI(x)+c(s-1)n_{x}\beta(f) -n_{x}\epsilon_{c}} \label{2.3} \\
&\geq e^{-cs\epsilon/2 +c(s-1)n_{x}\beta(f) -n_{x}\epsilon_{c}}, \nonumber
\end{align}

and, with a similar reasoning,
\[\sum_{n=1}^{\infty}\sum_{x\in B_{n}}e^{csf^{n}-nP(cf)}\frac{f^{n}}{n} \geq e^{-cs\epsilon/2 +c(s-1)n_{x}\beta(f) -n_{x}\epsilon_{c}}\left(\beta(f) - \epsilon\right).\]

It follows that
\begin{align*}
\frac{\sum_{n=1}^{\infty}\sum_{x\in A_{n}}e^{csf^{n}-nP(cf)}\frac{f^{n}}{n}}{\sum_{n=1}^{\infty}\sum_{x\in B_{n}}e^{csf^{n}-nP(cf)}\frac{f^{n}}{n}}
& \leq \frac{e^{- cs\epsilon + \log(d)}}{1-e^{- cs\epsilon + \log(d)}}\frac{1}{e^{-cs\epsilon/2 +c(s-1)n_{x}\beta(f) -n_{x}\epsilon_{c}}}\\
&= \frac{e^{-cs\epsilon + \log(d) + cs\epsilon/2 -c(s-1)n_{x}\beta(f) +n_{x}\epsilon_{c}}}{1-e^{- cs\epsilon + \log(d)}}\\
&= \frac{e^{-c(s\epsilon/2 +(s-1)n_{x}\beta(f)) +\log(d) + n_{x}\epsilon_{c}}}{1-e^{- cs\epsilon + \log(d)}}\stackrel{s\to1, c\to\infty}{\to} 0.
\end{align*}

Finally, in the same way
\[\lim_{c\rightarrow\infty, s\rightarrow 1}
\frac{\sum_{n=1}^{\infty}\sum_{x\in A_{n}}e^{csf^{n}-nP(cf)}}{\sum_{n=1}^{\infty}\sum_{x\in B_{n}}e^{csf^{n}-nP(cf)}} = 0.\]

It follows that
\begin{align*}
\liminf_{c\rightarrow\infty, s\rightarrow 1} \frac{\sum_{n=1}^{\infty}\sum_{x\in Fix_{n}}e^{csf^{n}-nP(cf)}\frac{f^{n}}{n}}{\sum_{n=1}^{\infty}\sum_{x\in Fix_{n}}e^{csf^{n}-nP(cf)}}
&= \liminf_{c\rightarrow\infty, s\rightarrow 1} \frac{\sum_{n=1}^{\infty}\sum_{x\in B_{n}}e^{csf^{n}-nP(cf)}\frac{f^{n}}{n}}{\sum_{n=1}^{\infty}\sum_{x\in B_{n}}e^{csf^{n}-nP(cf)}}\\
&\geq \beta(f) - \epsilon.
\end{align*}

As we consider a general $\epsilon > 0$, then we get

\[\liminf_{c\rightarrow\infty, s\rightarrow 1} \frac{\sum_{n=1}^{\infty}\sum_{x\in Fix_{n}}e^{csf^{n}-nP(cf)}\frac{f^{n}}{n}}{\sum_{n=1}^{\infty}\sum_{x\in Fix_{n}}e^{csf^{n}-nP(cf)}} \geq \beta(f).\]
\end{proof}

Now we can show the proof of Theorem $\ref{teo}$.

\begin{proof}
 Suppose $\mu$ is an accumulation point of  $\mu_{c,s}$. Then, $\mu$ is a $\sigma-$invariant probability and by last lemma
\[\mu(f)\geq \beta(f),\]
and from this follows that $\mu \in M_{max}(f)$.
\newline

Now we fix a continuous function $g$ and sequences $c_{j}\rightarrow\infty$ and
$s_{j}\rightarrow 1$, such that there exists
\[\lim_{j\rightarrow\infty}\mu_{c_{j},s_{j}}(g).\]

By the diagonal Cantor argument, there exists a subsequence $\{j_{i}\}$, such that there exists
\[\mu(k) := \lim_{i\rightarrow\infty}\mu_{c_{j_{i}},s_{j_{i}}}(k),\]
for any cylinder $k$.

We will show that for any $h$, there exists the limit
\[\lim_{i\rightarrow\infty}\mu_{c_{j_{i}},s_{j_{i}}}(h).\]

 Given $\epsilon>0$, as $X$ is compact, there exists functions $k_{1}$ and $k_{2}$, that can be written as linear combinations of  characteristic functions of   cylinders, such that for all $x\in X$
\[k_{1}(x) \leq h(x) \leq k_{2}(x) \leq k_{1}(x) + \epsilon.\]
It follows that
\begin{align*}
\limsup_{i\rightarrow\infty}\mu_{c_{j_{i}},s_{j_{i}}}(h) &\leq \lim_{i\rightarrow\infty}\mu_{c_{j_{i}},s_{j_{i}}}(k_{2}) \leq \lim_{i\rightarrow\infty}\mu_{c_{j_{i}},s_{j_{i}}}(k_{1}) + \epsilon\\
&\leq \liminf_{i\rightarrow\infty}\mu_{c_{j_{i}},s_{j_{i}}}(h) + \epsilon.
\end{align*}
Therefore
\[\liminf_{i\rightarrow\infty}\mu_{c_{j_{i}},s_{j_{i}}}(h) = \limsup_{i\rightarrow\infty}\mu_{c_{j_{i}},s_{j_{i}}}(h).\]

It follows that for any continuous function $h$ there exists the limit
\[\mu(h):=\lim_{i\rightarrow\infty}\mu_{c_{j_{i}},s_{j_{i}}}(h).\]

Therefore, $\mu$ is an accumulation point of the  $\mu_{c,s}$. Moreover,
\[\lim_{j\rightarrow\infty}\mu_{c_{j},s_{j}}(g)=\lim_{i\rightarrow\infty}\mu_{c_{j_{i}},s_{j_{i}}}(g)=\mu(g).\]
\end{proof}

%$\blacksquare$

%%%%%%%%%%%%%%%%%%%%%%%%%%%%%%%%%%%%%%%%%%%%%%%%%%%%%%%%%%%%%%%%%%%%%%%%%%%%%%%%%%%%%%%%%%%%
%%%%%%%%%%%%%%%%%%%%%%%%%%%%%%%%%%%%%%%%%%%%%%%%%%%%%%%%%%%%%%%%%%%%%%%%%%%%%%%%%%%%%%%%5555
\section{Proof of theorem $\ref{ldt}$}

We will show that: for any fixed $L>0$ (it can be that $L=\infty$), and any cylinder $k$
\[\lim_{c(1-s)\rightarrow L} \frac{1}{c}\log(\mu_{c,s}(k))=-\inf_{x\in k, \,x \in \text{PER}}I(x).\]
\textbf{Remark:}
 As we point out in the introduction we have to consider $c\rightarrow\infty$ and $s\rightarrow 1$. The hypothesis $c(1-s)\rightarrow L$ can be understood as a constraint on the speed such that simultaneously  $c\rightarrow\infty$ and $s\rightarrow 1$: that is, $c(1-s)\rightarrow L$.

The proof presented here also covers the case where we assume
\[\liminf_{c\rightarrow\infty, s\rightarrow 1} c(1-s) = L > 0,\]
and, it is not really necessary that $c(1-s)\rightarrow L$.

Now we will present the proof of theorem $\ref{ldt}$
\begin{proof}

Remember that we denote a cylinder $k$ and the indicator function of this set also by $k$.

It is enough to show that for any fixed cylinder $k$
\[\lim_{c(1-s)\rightarrow L} \frac{1}{c}\log \sum_{n=1}^{\infty}\sum_{y\in Fix_{n}}e^{csf^{n}(y)-nP(cf)}\frac{k^{n}(y)}{n} = -\inf_{x\in k, \, x \in \text{PER}}I(x),\]
because, by taking $k=X$, we will get
\[\lim_{c(1-s)\rightarrow L} \frac{1}{c}\log \sum_{n=1}^{\infty}\sum_{y\in Fix_{n}}e^{csf^{n}-nP(cf)} = -\inf_{x\in \text{PER}}I(x) = 0.\]

First we will show the lower (large deviation) inequality
\[\liminf_{c(1-s)\rightarrow L} \frac{1}{c}\log \sum_{n=1}^{\infty}\sum_{y\in Fix_{n}}e^{csf^{n}-nP(cf)}\frac{k^{n}}{n} \geq -\inf_{x\in k\,,\, x \in \text{PER}}I(x),\]
(for this part it is just enough to assume $c\rightarrow\infty$ and $s\rightarrow 1$).
\newline
Consider a generic  point  $x\in k$ which is part of a periodic orbit $\{x,...,\sigma^{(n_{x}-1)}x\}$. Therefore,
\begin{align*}
\sum_{n=1}^{\infty}\sum_{y\in Fix_{n}}e^{csf^{n}(y)-nP(cf)}\frac{k^{n}(y)}{n} &\geq
 \sum_{\{x,...,\sigma^{(n_{x}-1)}x\}}e^{csf^{n_{x}}-n_{x}P(cf)}\frac{k^{n_{x}}}{n_{x}}\\
&\geq  e^{csf^{n_{x}}(x)-n_{x}P(cf)}k^{n_{x}}(x)\\
&\geq  e^{csf^{n_{x}}(x)-n_{x}P(cf)}\\
&= e^{-csI(x) + n_{x}c(s-1)\beta(f) - n_{x}\epsilon_{c}} \ \ \ \ (\text{by} \ \ (\ref{2.1}), (\ref{2.2}), (\ref{2.3})).
\end{align*}

From this follows that
\begin{align*}
\liminf_{c(1-s)\rightarrow L} & \frac{1}{c}\log \sum_{n=1}^{\infty}\sum_{y\in Fix_{n}}e^{csf^{n}-nP(cf)}\frac{k^{n}}{n}\\
&\geq \liminf_{c(1-s)\rightarrow L} \frac{1}{c}\log \left(e^{-csI(x) + n_{x}c(s-1)\beta(f) - n_{x}\epsilon_{c}}\right)\\
&\geq \liminf_{c(1-s)\rightarrow L}-sI(x) + n_{x}(s-1)\beta(f) - \frac{n_{x}\epsilon_{c}}{c}\\
&= -I(x).
\end{align*}

As we take  $x$ as a generic periodic point in  $k$ we finally get

\[\liminf_{c(1-s)\rightarrow L} \frac{1}{c}\log \sum_{n=1}^{\infty}\sum_{y\in Fix_{n}}e^{csf^{n}-nP(cf)}\frac{k^{n}}{n}\geq-\inf_{x\in k, \, x \in \text{PER}}I(x).\]

Now we will show the upper (large deviation) inequality
\[\limsup_{c(1-s)\rightarrow L} \frac{1}{c}\log \sum_{n=1}^{\infty}\sum_{y\in Fix_{n}}e^{csf^{n}-nP(cf)}\frac{k^{n}}{n}\leq-\inf_{x\in k, \, x \in \text{PER}}I(x).\]
We will denote the value $\inf_{x\in k, \, x \in \text{PER}}I(x)$ by $I$.

 Consider a fixed $\delta>0$. As $f>0$ and $f$ is continuous, there exists a constant $|f|_{-}>0$ such that $f>|f|_{-}$.
As $c(1-s)\rightarrow L>0$ (or just considering $\liminf c(1-s) = L$) there exists $\psi>0$
such that for $c$ big enough  $c(1-s)> 2\psi$. As $\epsilon_{c}=P(cf)-c\beta(f)$ decrease to
$h_{f}$, we can also suppose that $c$ is such that  $\epsilon_{c\delta}<h_{f}+\psi|f|_{-}$.
Therefore, there exists $c_{0}$ such that for $c\geq c_{0}$
$$ c(1-s)|f|_{-}+h_{f}> h_{f} + 2\psi|f|_{-} > \epsilon_{c_{0}\delta} + \psi|f|_{-}.$$
The conclusion is that for such  $c\geq c_{0}$:
\begin{align*}
\sum_{n=1}^{\infty}\sum_{y\in Fix_{n}}e^{csf^{n}-nP(cf)}\frac{k^{n}(y)}{n} &=
\sum_{n=1}^{\infty}\sum_{y\in Fix_{n}}e^{cf^{n}(y) + c(s-1)f^{n}(y) - cn\left(\beta(f)\right) -n\epsilon_{c}}\frac{k^{n}(y)}{n}\\
&\leq \sum_{n=1}^{\infty}\sum_{y\in Fix_{n}}e^{-cn\left(\beta(f) -\frac{f^{n}(y)}{n}\right)-c(1-s)n|f|_{-}-nh_{f}}\frac{k^{n}(y)}{n}\\
&\leq \sum_{n=1}^{\infty}\sum_{y\in Fix_{n}}e^{-cn\left(\beta(f)-\frac{f^{n}(y)}{n}\right)((1-\delta)+\delta)-c(1-s)n|f|_{-}-nh_{f}}\frac{k^{n}(y)}{n}\\
&\leq \sum_{n=1}^{\infty}\sum_{y\in Fix_{n}}e^{-cI(1-\delta)-cn\left(\beta(f)-\frac{f^{n}(y)}{n}\right)\delta - c(1-s)n|f|_{-}-nh_{f}}\frac{k^{n}(y)}{n}\\
&\leq e^{-cI(1-\delta)}\sum_{n=1}^{\infty}\sum_{y\in Fix_{n}}e^{-nc\delta\left(\beta(f) - \frac{f^{n}(y)}{n}\right) -n\epsilon_{c_{0}\delta} - n\psi|f|_{-} }\\
&\leq e^{-cI(1-\delta)}\sum_{n=1}^{\infty}\sum_{y\in Fix_{n}}e^{-nc_{0}\delta\left(\beta(f)-\frac{f^{n}(y)}{n}\right) -n\epsilon_{c_{0}\delta} - n\psi|f|_{-}  }\\
&\leq e^{-cI(1-\delta)}\sum_{n=1}^{\infty}\sum_{y\in Fix_{n}}e^{c_{0}\delta f^{n}(y) -nP(c_{0}\delta f)-n\psi|f|_{-}}.
\end{align*}

As
\[P\left(c_{0}\delta f-P(c_{0}\delta f)-\psi|f|_{-}\right) = -\psi|f|_{-} < 0,\]
the series
\[\sum_{n=1}^{\infty}\sum_{y\in Fix_{n}}e^{c_{0}\delta f^{n}(y) -nP(c_{0}\delta f)-n\psi|f|_{-}}\]
converges to a constant $T<\infty$ (\cite{PP} cap 5).
It follows that
\begin{align*}
\limsup_{c(1-s)\rightarrow L} & \left(\frac{1}{c}\log \sum_{n=1}^{\infty}\sum_{y\in Fix_{n}}e^{csf^{n}-nP(cf)}\frac{k^{n}}{n}\right)\\
&\leq \limsup_{c(1-s)\rightarrow L} \frac{1}{c}\log\left( e^{-cI(1-\delta)}T \right)\\
&= -I(1-\delta).
\end{align*}
Now taking $\delta\rightarrow 0$, we get the upper bound inequality. \ \ \ \ \ \

\end{proof}
%$\blacksquare$
\bigskip
%%%%%%%%%%%%%%%%%%%%%%%%%%%%%%%%%%
%%%%%%%%%%%%%%%%%%%%%%%%%%%%%%%%%%%%%%%
%%%%%%%%%%%%%%%%%%%%%%%%%%%%%%%%%%%
%%%%%%%%%%%%%%%%%%%%%%%%%%%%%%
%%%%%%%%%%%%%%%%%%%%%%%%%%%%%%%%%%%%%%%%5
\section{The function $I$ and its extension $\widetilde{I}$}
%%%%%%%%%%%%%%%%%%%%%%%%%%%%%%%%%%%%
%%%%%%%%%%%%%%%%%%%%%%%%%%%%%%%%%%%%
%%%%%%%%%%%%%%%%%%%%%%%%%%%%%%%%%%%%%%%%%%%%%%%%%

\bigskip

For a periodic point $x$ we denote $n_{x}$ its minimum period.

Remember that for $x\in PER$ the function $I(x)$ is given by
\[I(x):=n_{x}\left(\beta(f) - \frac{f^{n_{x}}(x)}{n_{x}}\right) =n_{x}\beta(f) - f^{n_{x}}(x) .\]

We will show that  $I$  can be extended in a finite way to  a function $\widetilde{I}$  defined the all Bernoulli space $X$. This  $\widetilde{I}$ is non-negative, lower semi-continuous and such that the infimum of $I$ and $\widetilde{I}$ are the same in each cylinder set. This function $\widetilde{I}$ will be a deviation function for the family $\mu_{c,s}$ and will be different from the deviation function described in \cite{BLT} (which did not consider zeta measures).

By definition $\widetilde{I}:X\rightarrow \mathbb{R}\cup\{\infty\}$ is lower semi-continuous if  for any $x\in X$ and sequence $x_{m}\rightarrow x$ we have
\[\liminf_{m\rightarrow\infty}\widetilde{I}(x_{m})\geq \widetilde{I}(x).\]

\begin{definition}
We define $\widetilde{I}:X \rightarrow \mathbb{R}\cup\{\infty\}$ by
\[\widetilde{I}(x)= \lim_{\epsilon\rightarrow 0} \left(\inf \{I(y) : d(y,x) \leq \epsilon \}\right).\]
\end{definition}
As $I\geq 0$, and
\[\epsilon_{1} \leq \epsilon_{2} \Rightarrow \inf \{I(y) : d(y,x) \leq \epsilon_{1} \} \geq \inf \{I(y) : d(y,x) \leq \epsilon_{2} \}, \]
we have that $\widetilde{I}$ is well defined.
\begin{lemma} Suppose $x\in PER$ and $I(x)\neq 0$. Then
\[\lim_{\epsilon\rightarrow 0} \left(\inf \{I(y) : 0 < d(y,x) \leq \epsilon \}\right) = +\infty.\]
As a consequence we have:
\[I(x)=\widetilde{I}(x), \ \  x\in PER.\]
\end{lemma}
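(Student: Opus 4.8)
The plan is to first prove the displayed limit and then read off $I(x)=\widetilde I(x)$ from it. Indeed, in the formula $\widetilde I(x)=\lim_{\epsilon\to 0}\big(\inf\{I(y):y\in\text{PER},\ d(y,x)\le\epsilon\}\big)$ one may split off the point $y=x$, which is admissible once $\epsilon$ is small, to get $\inf\{I(y):y\in\text{PER},\ d(y,x)\le\epsilon\}=\min\big(I(x),\ \inf\{I(y):y\in\text{PER},\ 0<d(y,x)\le\epsilon\}\big)$; as soon as the last infimum exceeds $I(x)$ this equals $I(x)$, so letting $\epsilon\to 0$ gives $\widetilde I(x)=I(x)$. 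When $I(x)=0$ there is nothing to do, since always $0\le\widetilde I(x)\le I(x)$; so from now on $I(x)>0$.

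The first tool I would isolate is an elementary uniform estimate already contained in the proof of Lemma \ref{lema}: if $C$ is a Lipschitz constant of $f$ and $D_0:=\tfrac{C\theta}{1-\theta}$, then $f^N(y)\le N\beta(f)+D_0$ for every $y\in X$ and every $N\ge 1$. One proves this by closing the orbit segment $y,\sigma(y),\dots,\sigma^{N-1}(y)$ into the periodic point $\tilde z=\overline{y_1\cdots y_N}$: the telescoping geometric bound used in Lemma \ref{lema} gives $|f^N(y)-f^N(\tilde z)|\le D_0$, while $f^N(\tilde z)\le N\beta(f)$ because the minimal period of $\tilde z$ divides $N$ and every periodic orbit measure has $f$-average at most $\beta(f)$. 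Equivalently, any block of any orbit over-performs by at most the constant $D_0$: $\sum_{j=a}^{b-1}\big(f(\sigma^j y)-\beta(f)\big)\le D_0$ for all $0\le a\le b$.

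The second tool is a weak form of the Fine--Wilf theorem on words: if $z\in\text{PER}$ has minimal period $n_z$, if $z\neq x$, and if $z$ and $x$ agree on their first $\ell$ coordinates, then $n_z>\ell-n_x$; otherwise $x_1\cdots x_\ell$ would admit the two periods $n_x$ and $n_z$ with $\ell\ge n_x+n_z$, hence the period $\gcd(n_x,n_z)$, forcing $\gcd(n_x,n_z)=n_x=n_z$ and then $x=z$, a contradiction. Now choose $\epsilon=\theta^{\ell+1}$, so that $d(z,x)\le\epsilon$ means agreement on the first $\ell$ coordinates, and fix such a $z\in\text{PER}$, $z\neq x$, with $\ell\ge 2n_x$. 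Put $K:=\lfloor(\ell-n_x)/n_x\rfloor\ge 1$ and $N_0:=Kn_x$; then $N_0\le\ell-n_x<n_z$. Since $x$ is $n_x$-periodic, for $0\le i\le K-1$ the point $\sigma^{in_x}z$ agrees with $x=\sigma^{in_x}x$ on its first $\ell-in_x\ge n_x$ coordinates; summing the Lipschitz bounds over $i$ and over the $n_x$ coordinates of each block (with the same geometric estimate) yields $f^{N_0}(z)\le Kf^{n_x}(x)+D_0=N_0\beta(f)-KI(x)+D_0$. Applying the first tool to the point $\sigma^{N_0}z$ with $N=n_z-N_0\ge 1$ gives $f^{n_z-N_0}(\sigma^{N_0}z)\le(n_z-N_0)\beta(f)+D_0$. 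Adding these and using $I(z)=n_z\beta(f)-f^{n_z}(z)$ we obtain $I(z)\ge KI(x)-2D_0$. Since $K=K(\ell)\to\infty$ as $\epsilon\to 0$ and $I(x)>0$, the right-hand side tends to $+\infty$ uniformly over all admissible $z$, which is precisely the claimed limit.

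The real obstacle is the control of the "tail" $\sigma^{N_0}z,\dots,\sigma^{n_z-1}z$: a priori $n_z$ can be far larger than the prefix length $\ell$, so the orbit of $z$ strays arbitrarily far from the orbit of $x$, and one must exclude that it offsets the waste $\approx KI(x)$ accumulated along the prefix by over-performing afterwards. This is exactly what the uniform bound $f^N(y)\le N\beta(f)+D_0$ rules out, since it caps the over-performance of any orbit segment by a single constant; together with the combinatorial lower bound on $n_z$, it is the only ingredient beyond the geometric bookkeeping that already appears in Lemma \ref{lema}.
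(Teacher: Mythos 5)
Your proof is correct, and it reaches the paper's key estimate (every periodic $y\neq x$ shadowing $x$ for $\ell$ symbols has $I(y)\ge K(\ell)\,I(x)-\mathrm{const}$ with $K(\ell)\to\infty$) by a genuinely different organization. The paper splits the nearby periodic points into those with minimal period $n_y\le j$ and those with $n_y>j$: for the first set it compares the full period word with $x$, writes $n_y=a n_x+b$, and invokes without proof that $\inf\{n_y: y\in Y_j^-\}\to\infty$ (which is normally justified by finiteness of the set of periodic points of bounded period); for the second it cuts the period word at position $j$, closes the tail into an auxiliary periodic point $z$ and uses $I(z)\ge 0$. You avoid the case split: your Fine--Wilf bound $n_z>\ell-n_x$ shows directly, and quantitatively, that any periodic $z\neq x$ agreeing with $x$ on $\ell$ symbols has large minimal period (a proved replacement for the paper's implicit period-growth claim), and your uniform estimate $f^N(y)\le N\beta(f)+D_0$ --- obtained by the same orbit-closing argument, i.e.\ the paper's ``$I\ge 0$ for the closed tail'' packaged as a statement about arbitrary orbit segments --- handles the tail in all cases at once. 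Cutting at the multiple $N_0=Kn_x$ of the period also makes $f^{N_0}(x)=Kf^{n_x}(x)$ exact, so you do not need the paper's remainder term $b(y)$ and the attendant $n_x|f|_\infty$ slack; the net gain is the explicit uniform bound $I(z)\ge \lfloor(\ell-n_x)/n_x\rfloor\, I(x)-2D_0$ on the punctured ball, while the paper's two-case route is marginally more elementary in that it needs no word combinatorics. Your deduction of the consequences ($I=\widetilde I$ on $PER$, with the case $I(x)=0$ dispatched by $0\le\widetilde I(x)\le I(x)$) coincides with the paper's.
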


\begin{proof}
Suppose $x\in PER$ and $I(x)\neq 0$.
Let
\[Y_{j} := \{y \in PER : y\neq x \ \ and \ \ d(x,y)\leq \theta^{j}\}.\]
We only need to show that
\[\lim_{j\to \infty} \inf_{y \in Y_{j}} I(y) = +\infty.\]
Let
\[Y_{j}^{-}:=\{y\in Y_{j}: n_{y}\leq j\} \ \ and \ \ Y_{j}^{+}:=\{y\in Y_{j}: n_{y} > j\}.\]
We are going to show that
\[\lim_{j\to \infty} \inf_{y\in Y_{j}^{-}}I(y) = \lim_{j\to \infty} \inf_{y\in Y_{j}^{+}}I(y) =\infty.\]

Suppose first that $y\in Y_{j}^{-}$. By hypothesis $f \in F_{\theta}$, then there exists $C>0$ such that
\begin{align*}
f^{n_{y}}(y) &= f(y) + f(\sigma(y)) + f(\sigma^{2}(y)) +...+ f(\sigma^{n_{y}-1}(y))\\
&\leq (f(x) + C\theta^{j}) + (f(\sigma(x)) + C\theta^{j-1}) + ... + (f(\sigma^{n_{y}-1}(x)) + C\theta^{j-n_{y}+1})\\
&\leq f^{n_{y}}(x) +C\frac{\theta}{1-\theta}.
\end{align*}
We write $n_{y} = a(y)n_{x} + b(y), \ \ 0\leq b(y) < n_{x}$. Then
\begin{align*}
I(y) &= n_{y}\beta(f) - f^{n_{y}}(y) \geq n_{y}\beta(f) - f^{n_{y}}(x) - C\frac{\theta}{1-\theta}\\
&= (a(y)n_{x}+b(y))\beta(f) - f^{a(y)n_{x}+b(y)}(x) - C\frac{\theta}{1-\theta}\\
&= a(y)(n_{x}\beta(f) -f^{n_{x}}(x)) + b(y)\beta(f) -f^{b(y)}(x) - C\frac{\theta}{1-\theta}\\
&\geq a(y)I(x) -n_{x}|f|_{\infty} - C\frac{\theta}{1-\theta}\\
&= a(y)I(x) -C_{1},
\end{align*}
where $C_{1}$ not change with $y$ and $j$.
Then
\[ \lim_{j\to \infty} \inf_{y\in Y_{j}^{-}}I(y) \geq \lim_{j\to \infty} \inf_{y\in Y_{j}^{-}}a(y)I(x) -C_{1} = \infty,\]
because
\[\lim_{j\to \infty} \inf\{n_{y} : y\in Y_{j}^{-}\} = \infty.\]

\bigskip

Now suppose that $y\in Y_{j}^{+}$. Then $n_{y}=j+i, \ \ i > 0$, and we write
\[y := \overline{y_{1}...y_{j}y_{j+1}...y_{j+i}},\]
and define
\[z := \overline{y_{j+1}...y_{j+i}}.\]
Then,
\begin{align*}
f^{i}(\sigma^{j}(y)) &= f(\sigma^{j}(y)) + f(\sigma^{j+1}(y)) +...+ f(\sigma^{j+i-1}(y))\\
&\leq (f(z) + C\theta^{i}) + (f(\sigma(z)) + C\theta^{i-1}) + ... + (f(\sigma^{i-1}(z)) + C\theta)\\
&\leq f^{i}(z) + C\frac{\theta}{1-\theta},
\end{align*}
and, also
\begin{align*}
f^{j}(y) &= f(y) + f(\sigma(y)) + f(\sigma^{2}(y)) +...+ f(\sigma^{j-1}(y))\\
&\leq (f(x) + C\theta^{j}) + (f(\sigma(x)) + C\theta^{j-1}) + ... + (f(\sigma^{j-1}(x)) + C\theta)\\
&\leq f^{j}(x) +C\frac{\theta}{1-\theta}.
\end{align*}
So
\[f^{j+i}(y)=f^{j}(y)+f^{i}(\sigma^{j}(y))\leq f^{j}(x)+f^{i}(z)+ 2C\frac{\theta}{1-\theta}.\]
We write $j = a(j)n_{x} + b(j), \ \ 0\leq b(j) < n_{x}$. Then
\begin{align*}
I(y) &= (j+i)\beta(f) - f^{j+i}(y)\\
&\geq (j+i)\beta(f) - f^{j}(x) - f^{i}(z) - 2C\frac{\theta}{1-\theta}\\
&\geq i\beta(f)-f^{i}(z) +j\beta(f) -f^{j}(x) - 2C\frac{\theta}{1-\theta}\\
&\geq  I(z) + (a(j)n_{x}+b(j))\beta(f) -f^{a(j)n_{x}+b(j)}(x) - 2C\frac{\theta}{1-\theta}\\
&\geq  a(j)n_{x}\beta(f) +b(j)\beta(f) -a(j)f^{n_{x}}(x) - f^{b(j)}(x) - 2C\frac{\theta}{1-\theta}\\
&\geq  a(j)I(x)  -n_{x}|f|_{\infty} - 2C\frac{\theta}{1-\theta}\\
&=  a(j)I(x)  -C_{1},
\end{align*}
where $C_{1}$ not change with $y$ and $j$.
Then, finally
\[ \lim_{j\to \infty} \inf_{y\in Y_{j}^{+}}I(y) \geq \lim_{j\to \infty} a(t)I(x) -C_{1} = \infty.\]

\end{proof}

\begin{corollary}
Given $x\in PER$, then there is a cylinder $k$ such that $x \in k$ and $\inf_{y \in Per\cap k}I(y) = I(x)$.
\end{corollary}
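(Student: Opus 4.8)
The plan is to separate the two cases $I(x)=0$ and $I(x)\neq 0$ and, in each, to exhibit the required cylinder explicitly; the only nontrivial ingredient is the preceding Lemma, which prevents periodic points near $x$ (other than $x$ itself) from having small $I$-value.

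First I would dispose of the case $I(x)=0$. Since $I\geq 0$ on $Per$ and $x\in Per$, for \emph{every} cylinder $k$ containing $x$ one has $0\leq \inf_{y\in Per\cap k}I(y)\leq I(x)=0$, so $\inf_{y\in Per\cap k}I(y)=I(x)$ and any such $k$ works.

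Next, for $I(x)\neq 0$, I would invoke the preceding Lemma: $\inf\{I(y): y\in PER,\ 0<d(y,x)\leq\theta^{j}\}\to+\infty$ as $j\to\infty$, so one may fix $j$ so large that this infimum strictly exceeds $I(x)$. Set $k:=\{y\in X: d(y,x)\leq\theta^{j}\}$; by the form of the metric $d_\theta$ this set is exactly the cylinder determined by the first $j-1$ coordinates of $x$, and it contains $x$. Then every $y\in Per\cap k$ is either $x$ itself (contributing the value $I(x)$) or satisfies $0<d(y,x)\leq\theta^{j}$ and hence $I(y)>I(x)$; thus $I(x)$ lies in the set $\{I(y):y\in Per\cap k\}$ and is a lower bound for it, which gives $\inf_{y\in Per\cap k}I(y)=I(x)$.

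I do not expect any real obstacle: everything is delivered by the Lemma. The only points that require a moment's care are the identification of the metric ball $\{y:d(y,x)\leq\theta^{j}\}$ with a cylinder set (immediate from the definition of $d_\theta$) and the trivial remark that the infimum of a set of reals that contains its minimum equals that minimum.
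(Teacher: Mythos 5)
Your proposal is correct and follows essentially the same route as the paper: the paper's own proof is exactly the case split ``if $I(x)=0$ there is nothing to prove; if $I(x)\neq 0$ use the lemma,'' and you have simply written out the details (choice of $j$ with $\inf\{I(y):y\in PER,\ 0<d(y,x)\leq\theta^{j}\}>I(x)$, identification of the ball $\{y:d(y,x)\leq\theta^{j}\}$ with the cylinder $[x_{1}\dots x_{j-1}]$, and the observation that the infimum over $Per\cap k$ is then attained at $x$). No gaps; the extra verifications you flag are indeed immediate from the definition of $d_{\theta}$ and the nonnegativity of $I$.
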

\begin{proof}
If $I(x)=0$ there is nothing to prove.
\newline
If $I(x)\neq 0$, then we can use the lemma.
\end{proof}

\begin{corollary} Let $x\in PER$. Then, the following are equivalent:
\newline
i) $I(x)=0$
\newline
ii) $\mu_{x}$ given by $\mu_{x}(g)=\frac{g^{n_{x}}(x)}{n_{n}}$ is in $M_{max}(f)$
\newline
iii) $x \in supp(\mu_{\infty})$, for some $\mu_{\infty}\in M_{max}(f)$.
\end{corollary}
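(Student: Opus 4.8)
The implications (i)$\Leftrightarrow$(ii) and (ii)$\Rightarrow$(iii) are immediate, so I would dispose of them first. Since $\mu_x$ is the $\sigma$-invariant probability equidistributed on the finite orbit of $x$, one has $\int f\,d\mu_x=f^{n_x}(x)/n_x$, hence $I(x)=n_x(\beta(f)-\int f\,d\mu_x)\ge 0$, and $I(x)=0$ exactly when $\int f\,d\mu_x=\beta(f)$, i.e.\ exactly when $\mu_x\in M_{max}(f)$; that is (i)$\Leftrightarrow$(ii). For (ii)$\Rightarrow$(iii) one simply takes $\mu_\infty=\mu_x$, whose support is the orbit of $x$ and contains $x$.

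The real content is (iii)$\Rightarrow$(i), and the plan is to argue by contradiction, combining Atkinson's lemma (Section 2) with the preceding Lemma, namely that $\inf\{I(y):0<d(y,x)\le\epsilon\}\to+\infty$ as $\epsilon\to 0$ whenever $I(x)\neq 0$. Suppose $x\in\operatorname{supp}(\mu_\infty)$ for some $\mu_\infty\in M_{max}(f)$, but $I(x)>0$. First I would fix, via that Lemma, an $\epsilon_0>0$ such that every $y\in\text{PER}$ with $0<d(y,x)\le\epsilon_0$ satisfies $I(y)>1$. Next, for each $m$ with $\theta^m<\epsilon_0$, I would let $C_m$ be the length-$m$ cylinder determined by $x$ and observe that, since $M_{max}(f)$ is a face of $\mathcal M$ (it maximizes the continuous linear functional $\nu\mapsto\int f\,d\nu$), the ergodic decomposition of $\mu_\infty$ is carried by ergodic probabilities lying in $M_{max}(f)$; as $\mu_\infty(C_m)>0$, there is an ergodic $\nu=\nu_m\in M_{max}(f)$ with $\nu(C_m)>0$, so $\int f\,d\nu=\beta(f)$.

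Then I would apply Atkinson's lemma to $A=C_m$, $\nu$, $f$: for any prescribed $\epsilon>0$ there are $p\in C_m$ and $T\in\mathbb{N}$ with $\sigma^T(p)\in C_m$ and $|f^T(p)-T\beta(f)|<\epsilon$, and I would close this up to the periodic point $q=\overline{p_1\cdots p_T}$, of minimal period $n_q\mid T$. The two conditions $p\in C_m$ and $\sigma^T(p)\in C_m$ force $q$ to begin with $x_1\cdots x_m$, hence $d(q,x)\le\theta^m<\epsilon_0$, and force $p$ and $q$ to agree on the first $T+m$ symbols; so, exactly as in the proof of Lemma~\ref{lema}, $|f^T(p)-f^T(q)|\le C\theta^{m+1}/(1-\theta)=:\eta_m$, where $C$ is the Lipschitz constant of $f$ --- crucially $\eta_m$ depends only on $m$, not on $T$. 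Since $q$ is $n_q$-periodic with $n_q\mid T$ one has $T\beta(f)-f^T(q)=\tfrac{T}{n_q}I(q)\ge 0$, whence $I(q)\le\tfrac{n_q}{T}(\eta_m+\epsilon)\le\eta_m+\epsilon$. Taking $m$ large and $\epsilon$ small so that $\eta_m+\epsilon<1$, the inequality $I(q)<1$ together with $d(q,x)\le\theta^m<\epsilon_0$ contradicts the choice of $\epsilon_0$ unless $q=x$; so $q=x$, hence $n_x\mid T$ and $I(x)=\tfrac{n_x}{T}(T\beta(f)-f^T(q))\le\eta_m+\epsilon$. Letting $\epsilon\to 0$ and $m\to\infty$ forces $I(x)=0$, contradicting $I(x)>0$; this proves (iii)$\Rightarrow$(i).

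The step I expect to be the main obstacle is precisely (iii)$\Rightarrow$(i): a priori $x$ lies only in the closure of the union of the supports of the ergodic components of $\mu_\infty$, so one cannot assume $x$ is generic for a single ergodic maximizing measure. The way around this is to run Atkinson's lemma \emph{inside} the small cylinder $C_m$ against an ergodic component that charges $C_m$, and then to note that the Lipschitz closing error $\eta_m=C\theta^{m+1}/(1-\theta)$ is controlled by the cylinder size alone; feeding this into the isolation Lemma pins the constructed periodic point down to $x$ itself, at which point the divisibility $n_x\mid T$ closes the argument.
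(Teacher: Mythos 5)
Your proof is correct and follows essentially the same route as the paper: the implications (i)$\Leftrightarrow$(ii) and (ii)$\Rightarrow$(iii) are handled identically, and for (iii)$\Rightarrow$(i) both arguments combine the isolation lemma (periodic points near $x$ have large $I$ when $I(x)\neq 0$) with the Atkinson-plus-closing construction of Lemma 2 run inside a small cylinder around $x$ that is charged by an ergodic maximizing measure. The only difference is one of detail: you spell out explicitly the ergodic-decomposition step and the closing estimate $\eta_m=C\theta^{m+1}/(1-\theta)$ inside the cylinder, which the paper compresses into ``we can suppose $\mu_{\infty}$ is ergodic'' and ``the same ideas in the proof of Lemma 2''.
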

\begin{proof}
It is easy that
$i) \leftrightarrow ii)$, and $ii) \to iii)$. We are going to prove that:  not true  \, i)$ \,\to $  not true \, iii).
\newline
Suppose $I(x) \neq 0$.  By the corollary above there is a cylinder $k$, such that $x \in k$ and $\inf_{y \in Per\cap k}I(y) = I(x) \neq 0$.
We are going to prove that if $\mu_{\infty} \in M_{max}$, then $\mu_{\infty}(k) = 0$ (this show that $x \notin supp(\mu_{\infty})$).
We remark that we can suppose $\mu_{\infty} \in M_{max}$ is ergodic.
\newline
To prove that $\mu_{\infty}(k) = 0$, we have to  prove that if $\mu_{\infty}(k) \neq 0$, then $$\inf_{y \in Per\cap k}I(y) = I(x) = 0.$$ But, this is false.

We remark that if $\mu_{\infty}(k) \neq 0$, then the same ideas in proof of lemma \ref{lema}, that is $\inf_{x\in PER} I(x)=0$, can be used to prove that $\inf_{y \in Per\cap k}I(y) = I(x) = 0$.

\end{proof}

\begin{corollary}
Let $f\in F_{\theta}$. Suppose there is a unique $\mu_{\infty}$ in $M_{max}(f)$. Then $\mu_{\infty}$ has support in a periodic orbit, or there are no periodic points in the support of $\mu_{\infty}$.
\end{corollary}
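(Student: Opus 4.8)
The plan is to prove the contrapositive of the dichotomy: if $\mu_\infty$ is the unique element of $M_{\max}(f)$ and its support \emph{does} contain a periodic point $x$, then in fact the support equals the orbit of $x$. First I would invoke the previous corollary (the equivalence i) $\leftrightarrow$ iii)): since $x \in \mathrm{supp}(\mu_\infty)$ for some maximizing measure, we get $I(x) = 0$, i.e.\ $f^{n_x}(x) = n_x \beta(f)$. This says the periodic measure $\mu_x$ supported on the orbit of $x$ satisfies $\int f\, d\mu_x = \beta(f)$, so $\mu_x \in M_{\max}(f)$. By uniqueness, $\mu_x = \mu_\infty$, and therefore $\mathrm{supp}(\mu_\infty) = \mathrm{supp}(\mu_x)$, which is exactly the (finite) orbit of $x$ — a periodic orbit. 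This gives the first alternative in the statement.

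So the logical skeleton is short: (1) a periodic point in the support forces $I = 0$ at that point (previous corollary); (2) $I(x) = 0$ means the orbit measure $\mu_x$ is maximizing; (3) uniqueness collapses $\mu_\infty$ onto $\mu_x$; (4) hence the support is a single periodic orbit. The remaining case — no periodic point lies in $\mathrm{supp}(\mu_\infty)$ — is then simply the negation of the hypothesis in step (1), so nothing further is needed: either there is a periodic point in the support (and we land in the first alternative) or there is none (the second alternative), and these exhaust the possibilities.

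The only point that needs a word of care is step (3): strictly speaking the previous corollary is stated for $x \in PER$, and one must make sure that "$x \in \mathrm{supp}(\mu_\infty)$" is being applied to a genuinely periodic $x$; this is automatic here because the whole dichotomy concerns whether periodic points appear in the support, so the $x$ under consideration is periodic by assumption. A second routine check is that $\mu_x$ is a well-defined $\sigma$-invariant probability — it is the normalized sum of Dirac masses along the finite orbit — and that $\int f\, d\mu_x = \frac{1}{n_x} f^{n_x}(x)$, which is exactly the quantity appearing in the definition of $I$.

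The main (and essentially only) obstacle is conceptual rather than technical: one has to notice that uniqueness of the maximizing measure is strong enough to force the \emph{entire} support to be the orbit, not merely to contain it. Once that observation is made, the proof is immediate from the corollaries already established, and no estimates are required.
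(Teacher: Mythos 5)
Your proposal is correct and follows essentially the paper's own argument: the paper also takes a periodic point $x$ in the support, applies the previous corollary (there via the implication iii) $\to$ ii), you via iii) $\to$ i) $\to$ ii)) to conclude $\mu_x \in M_{\max}(f)$, and then invokes uniqueness to get $\mu_\infty = \mu_x$, whose support is the periodic orbit. The small detour through $I(x)=0$ is an immaterial difference.
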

\begin{proof}
If there is a $x \in PER$ such that $x \in supp(\mu_{\infty})$, then ( $iii) \to ii)$ ) $\mu_{x} \in  M_{max}(f)$, so $\mu_{\infty}=\mu_{x}$.
\end{proof}

\begin{lemma}
 The function $\widetilde{I}:X \rightarrow \mathbb{R}\cup\{\infty\}$ is non-negative, lower semi-continuous and for all cylinder $k$
\[\inf_{k \cap X}\widetilde{I} = \inf_{k\cap PER}I.\]
\end{lemma}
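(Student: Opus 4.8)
The plan is to verify the three claimed properties of $\widetilde{I}$ one at a time, with the cylinder-infimum identity being the main point of substance.

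\textbf{Non-negativity.} Since $I\geq 0$ on $\text{PER}$ (this is exactly Lemma \ref{lema} together with the definition of $\beta(f)$ as the supremum: for any periodic $y$, $\tfrac{1}{n_y}f^{n_y}(y)=\int f\,d\mu_y\leq\beta(f)$), every infimum $\inf\{I(y):d(y,x)\leq\epsilon\}$ is $\geq 0$, hence so is the decreasing limit defining $\widetilde{I}(x)$. (One must also note that for every $x$ and every $\epsilon>0$ the set $\{y\in\text{PER}:d(y,x)\leq\epsilon\}$ is nonempty, so $\widetilde{I}(x)<\infty$ is at least well-posed as a $\liminf$; finiteness of $\widetilde{I}$ everywhere will follow from the cylinder identity below, since periodic points with $I$ arbitrarily small exist in the cylinder containing $x$ when that cylinder meets $\text{supp}(\mu_\infty)$ — but in general $\widetilde{I}$ may take the value $+\infty$, which is allowed by the codomain $\mathbb{R}\cup\{\infty\}$.)

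\textbf{Lower semi-continuity.} Let $x_m\to x$. Fix $\epsilon>0$. For $m$ large, $d(x_m,x)\leq\epsilon/2$, so any $y$ with $d(y,x_m)\leq\epsilon/2$ satisfies $d(y,x)\leq\epsilon$; hence $\inf\{I(y):d(y,x_m)\leq\epsilon/2\}\geq\inf\{I(y):d(y,x)\leq\epsilon\}$. Taking $\liminf_{m}$ on the left gives $\liminf_m\widetilde{I}(x_m)\geq\liminf_m\inf\{I(y):d(y,x_m)\leq\epsilon/2\}\geq\inf\{I(y):d(y,x)\leq\epsilon\}$, using monotonicity of the infimum in the radius so that $\widetilde{I}(x_m)\geq\inf\{I(y):d(y,x_m)\leq\epsilon/2\}$. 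Now let $\epsilon\to 0$ to conclude $\liminf_m\widetilde{I}(x_m)\geq\widetilde{I}(x)$. (In the ultrametric $d_\theta$, $d(y,x_m)\leq\epsilon/2$ and $d(x_m,x)\leq\epsilon/2$ even give $d(y,x)\leq\epsilon/2$, which only makes the inequality cleaner.)

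\textbf{The cylinder identity $\inf_{k}\widetilde{I}=\inf_{k\cap\text{PER}}I$.} For ``$\leq$'': if $x\in k\cap\text{PER}$, then for all small $\epsilon$ the ball $\{y:d(y,x)\leq\epsilon\}$ is contained in $k$, so $\inf\{I(y):d(y,x)\leq\epsilon\}\leq I(x)$, giving $\widetilde{I}(x)\leq I(x)$; taking the infimum over $x\in k\cap\text{PER}$ yields $\inf_k\widetilde{I}\leq\inf_{k\cap\text{PER}}I$. For ``$\geq$'': let $x\in k$ be arbitrary. Since $k$ is a cylinder, for $\epsilon$ small enough $\{y:d(y,x)\leq\epsilon\}\subseteq k$, so every $y$ appearing in the infimum defining $\widetilde{I}(x)$ lies in $k\cap\text{PER}$ (recall $I$ is only defined on $\text{PER}$, so the infimum is really over $\text{PER}$-points in the ball); hence $\inf\{I(y):d(y,x)\leq\epsilon\}\geq\inf_{k\cap\text{PER}}I$, and letting $\epsilon\to 0$ gives $\widetilde{I}(x)\geq\inf_{k\cap\text{PER}}I$. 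Taking the infimum over $x\in k$ gives $\inf_k\widetilde{I}\geq\inf_{k\cap\text{PER}}I$.

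\textbf{Main obstacle.} The only genuinely delicate point is the bookkeeping in the definition of $\widetilde I$: the set $\{y:d(y,x)\leq\epsilon\}$ over which we infimize must be intersected with $\text{PER}$ (since $I$ is a priori defined only there), and one must make sure this intersection is always nonempty so that $\widetilde I$ is well defined — periodic points are dense in $X$, so any ball contains one. Beyond that, everything reduces to the monotonicity of $\epsilon\mapsto\inf\{I(y):d(y,x)\leq\epsilon\}$ (already noted in the excerpt) and the fact that, in the ultrametric $d_\theta$, a sufficiently small metric ball around any point is a single cylinder and hence lies inside any fixed cylinder $k$ once its radius is below the relevant threshold.
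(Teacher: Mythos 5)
Your proposal is correct, and on the main point it takes a genuinely more direct route than the paper. The non-negativity and lower semi-continuity arguments coincide with the paper's (the paper phrases l.s.c.\ with an auxiliary $\delta<\widetilde I(x)$, you phrase it with a fixed $\epsilon$ and monotonicity of the radius; same mechanism). The difference is in the identity $\inf_{k\cap X}\widetilde I=\inf_{k\cap PER}I$. For the inequality $\geq$ the paper argues globally: it takes a minimizing sequence $x_m\in k$ for $\widetilde I$, uses compactness of the cylinder to extract an accumulation point $x\in k$, uses lower semi-continuity to see that $\widetilde I$ attains its infimum at $x$, and then extracts periodic points $y_m\to x$ with $I(y_m)\to\widetilde I(x)$, which (implicitly by the same ball-in-cylinder fact you make explicit) eventually lie in $k$. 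You instead prove the stronger pointwise statement that $\widetilde I(x)\geq\inf_{k\cap PER}I$ for every $x\in k$, simply because in the ultrametric $d_\theta$ a small enough closed ball around $x$ is contained in the cylinder $k$, so the periodic points entering the definition of $\widetilde I(x)$ are already in $k\cap PER$; this bypasses compactness, attainment of the infimum, and also the previous lemma $\widetilde I=I$ on $PER$ (for the direction $\leq$ you only need the trivial half $\widetilde I\leq I$, since a periodic $x$ lies in every ball around itself — note that this, rather than the containment of the ball in $k$ that you cite, is what yields $\inf\{I(y):d(y,x)\leq\epsilon\}\leq I(x)$). Your remark that the infimum must be read over $PER$-points in the ball and is over a nonempty set by density of periodic points is a point the paper leaves implicit. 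Both routes are sound; yours is more elementary and self-contained, the paper's has the mild extra payoff of exhibiting a point of $k$ where $\inf_{k}\widetilde I$ is attained.
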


\begin{proof}
It is trivial that $\widetilde{I} \geq 0$.
Suppose $x$ and $\{x_{m}\}$ in $X$ are such that  $x_{m}\rightarrow x$. If $\widetilde{I}(x)=0$ there is nothing to prove. Suppose $\widetilde{I}(x)>0$. Take $\delta>0$ such that $\widetilde{I}(x)>\delta$. By definition of $\widetilde{I}(x)$, there exists $\epsilon>0$ such that for all $y\in \text{PER}$ with $d(x,y)<\epsilon$, we have that $I(y)>\delta$. If $m$ is large enough $d(x_{m},x)<\epsilon/2$. It follows that for large $m$
\[ \inf \{I(y) : d(y,x_{m}) \leq \epsilon/2 \} \geq \inf \{I(y) : d(y,x) \leq \epsilon \} \geq \delta.\]
Therefore, $\widetilde{I}(x_{m})\geq \delta$, and finally
\[\liminf_{m\rightarrow\infty} \widetilde{I}(x_{m}) \geq \delta.\]
As we take any  $\delta < \widetilde{I}(x)$, we have that
\[\liminf_{m\rightarrow\infty} \widetilde{I}(x_{m}) \geq \widetilde{I}(x),\]
and this shows that $\widetilde{I}$ is lower semi-continuous.
Now, for a fixed cylinder
$k$, we will show that:
\[\inf_{k \cap X}\widetilde{I} = \inf_{k\cap \text{PER}}I.\]
We know that for any $y \in k\cap PER$
\[\widetilde{I}(y) = I(y),\]
then
\[\inf_{k \cap X}\widetilde{I} \leq \inf_{k \cap PER}\widetilde{I}  = \inf_{k\cap PER}I.\]
We have to show that
\[\inf_{k \cap X}\widetilde{I} \geq \inf_{k\cap PER}I.\]
 Consider $x_{m}$ a sequence of elements in $k\cap X$ such that $\widetilde{I}(x_{m})\rightarrow \inf_{k\cap X}\widetilde{I}$. Denote by $x \in k \cap X$ an accumulation point of $\{x_{m}\}$. Then, as $\widetilde{I}$ is lower semi-continuous
\[\widetilde{I}(x)\leq \liminf_{m\rightarrow\infty} \widetilde{I}(x_{m}),\]
that is,
\[\widetilde{I}(x)=\inf_{k \cap X}\widetilde{I}.\]

From the definition $\widetilde{I}(x)$, there exists $\{y_{m}\}$ in $k\cap \text{PER}$ such that $y_{m}\rightarrow x$ e $I(y_{m})\rightarrow \widetilde{I}(x)$. It follows that
\[ \inf_{k \cap X}\widetilde{I} = \widetilde{I}(x) \geq \inf_{k\cap \text{PER}}I.\]
\end{proof}

From this lemma  and theorem \ref{ldt} it follows that

\begin{corollary}

The probabilities $\mu_{c,s}$ satisfies a Large Deviation Principle with deviation function $\widetilde{I}$:
for fixed $L>0$, and for any cylinder  $k\subset X$
\[\lim_{c(1-s)\rightarrow L}\frac{1}{c}\log \mu_{c,s}(k) = - \inf_{x\in k}\widetilde{I}(x),\]
where $\widetilde{I}$ is lower semi-continuous and non-negative.
The same is true if we have:
\[\liminf_{c\to\infty, \, s\to1}c(1-s)=L>0.\]
\end{corollary}
The equilibrium measures $\mu_{c\,f}$ for $cf$ converge to $\mu_\infty$ (when $\mu_\infty \in M_{max}(f)$ is unique). According to \cite{BLT} they satisfy a L. D. P. with deviation function $I_{BLT}$:

 That is, when $\mu_\infty \in M_{max}(f)$ is unique, for any cylinder $k\subset X=\{0,1\}^{\mathbb{N}}$
\[\lim_{c\to\infty} \frac{1}{c}\log(\mu_{c\,f}(k))=-\inf_{x\in k}I_{BLT}(x).\]

The deviation function $ I_{BLT}$ is non-negative, lower semi-continuous but is finite only in the pre-images of points in the support of the maximizing probability.

We will show that:

\begin{proposition} Suppose  $\mu_\infty$ is the unique maximizing probability for $f$ as above.
Then, there exists a cylinder $k$ such that
\[\inf_{x\in k}\widetilde{I} \neq \inf_{x\in k} I_{BLT}.\]
\end{proposition}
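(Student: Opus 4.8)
The plan is to exploit a contrast already established above: $\widetilde{I}$ agrees with $I$ on $\text{PER}$, hence takes a finite value at every periodic point, whereas $I_{BLT}$ is finite only on $\bigcup_{n\ge 0}\sigma^{-n}(\text{supp}\,\mu_\infty)$ and therefore equals $+\infty$ at many periodic points. The idea is to pick a periodic point $p$ with $I_{BLT}(p)=+\infty$ while $\widetilde{I}(p)=I(p)<\infty$, and then to convert this pointwise gap into a gap between the two infima over a small cylinder around $p$, using that $I_{BLT}$ is lower semicontinuous.

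First I would check that $\text{supp}\,\mu_\infty\ne X$. This is where uniqueness of the maximizing probability enters: by the dichotomy Corollary proved above, $\text{supp}\,\mu_\infty$ is then either a single periodic orbit or a set containing no periodic point, and in either case it is a proper closed subset of $X$, since $X$ is infinite (the alphabet has $d\ge 2$ symbols) and contains periodic points. Since periodic points are dense and $\text{supp}\,\mu_\infty$ is closed, $\text{PER}$ is not contained in $\text{supp}\,\mu_\infty$, so I may fix a periodic point $p\notin\text{supp}\,\mu_\infty$.

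Next I would observe that $p\notin\bigcup_{n\ge 0}\sigma^{-n}(\text{supp}\,\mu_\infty)$. Indeed $\text{supp}\,\mu_\infty$ is forward invariant under $\sigma$ (being the support of a $\sigma$-invariant probability), so if some $\sigma^{n}(p)$ belonged to it, then — the orbit of $p$ being a finite cycle — every point of that orbit, in particular $p$ itself, would lie in $\text{supp}\,\mu_\infty$, contrary to the choice of $p$. By the quoted property of $I_{BLT}$ this forces $I_{BLT}(p)=+\infty$, while $\widetilde{I}(p)=I(p)=n_p\beta(f)-f^{n_p}(p)$ is finite (indeed $>0$, because $p\notin\text{supp}\,\mu_\infty$ gives $I(p)\ne 0$ by the characterization of $\{I=0\}$ on $\text{PER}$ established above). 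To finish, choose any real $M>I(p)$, say $M=I(p)+1$; by lower semicontinuity of $I_{BLT}$ the set $\{x:I_{BLT}(x)>M\}$ is open and contains $p$, hence it contains a cylinder $k$ with $p\in k$, and then
\[
\inf_{x\in k}I_{BLT}(x)\ \ge\ M\ >\ I(p)\ =\ \widetilde{I}(p)\ \ge\ \inf_{x\in k}\widetilde{I}(x),
\]
so that $\inf_{k}\widetilde{I}\ne\inf_{k}I_{BLT}$, as required.

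I expect the only delicate point to be the verification that $\text{supp}\,\mu_\infty\ne X$; the remainder is bookkeeping with lower semicontinuity and with forward invariance of the support. It is worth noting that the statement genuinely requires $d\ge 2$, since for a one-symbol alphabet $X$ is a single point, $f$ is constant, and $\widetilde{I}\equiv I_{BLT}\equiv 0$. I would also make sure the argument uses only the two properties of $I_{BLT}$ imported from \cite{BLT} — lower semicontinuity, and finiteness solely on $\bigcup_{n\ge 0}\sigma^{-n}(\text{supp}\,\mu_\infty)$ — so that nothing depends on whether the calibrated subaction happens to be unique.
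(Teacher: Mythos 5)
Your proposal is correct and follows essentially the same route as the paper: both arguments hinge on locating a periodic point $p$ with $I_{BLT}(p)=\infty$ but $\widetilde{I}(p)=I(p)<\infty$, and then using lower semicontinuity of $I_{BLT}$ to produce a small cylinder $k\ni p$ on which $\inf_k I_{BLT}$ exceeds $I(p)\ge \inf_k\widetilde{I}$ (you do this directly via the open set $\{I_{BLT}>M\}$, the paper by a sequence-and-contradiction argument --- the same idea). The one genuine improvement in your write-up is that you actually verify the existence of such a periodic point, via the support dichotomy and the forward invariance of $\operatorname{supp}\mu_\infty$, whereas the paper simply fixes one without justification; that verification is correct and worth keeping.
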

\begin{proof}

 We fix a periodic point $x$ such that $ I_{BLT}(x) = \infty$, and for each  $m$ we consider the cylinder $k_{m}=[x_{1}...x_{m}]$.
We know that
\[\widetilde{I}(x) = I(x) < \infty \ \ \ \text{and} \ \ \ \ I_{BLT}(x) = \infty.\]
As, for each $m$
\[\inf_{k_{m} \cap X}\widetilde{I} \leq \widetilde{I}(x),\]
we just have to show that:
\newline
\textbf{Claim:} there exists $m$ such that $\inf_{k_{m} \cap X}\,I_{BLT} > \widetilde{I}(x)$.
\newline

 The proof will be by contradiction. Suppose that for any $m$, we have that $\inf_{k_{m} \cap X}I_{BLT} \leq \widetilde{I}(x)$. Then, for each $m$ denote $x_{m} \in (k_{m} \cap X)$ a point which realizes $\inf_{k_{m}\cap X}I_{BLT}$. Therefore, we get a  sequence $x_{m}\rightarrow x$, such that
\[\liminf_{m \rightarrow \infty} I_{BLT}(x_{m}) \leq \widetilde{I}(x) < I_{BLT} (x),\]
and this is in contradiction with the fact that $I_{BLT}$ is lower semi-continuous \cite{BLT}.
\end{proof}

%%%%%%%%%%%%%%%%%%%%%%%%%%%%%%%%%%%%%%%%%%%%%%%%
%%%%%%%%%%%%%%%%%%%%%%%%%%%%%%%%%%%%%%%%%%%%%%%%%%%%%%%%%
%%%%%%%%%%%%%%%%%%%%%%%%%%%%%%%%%%%%%%%%%%%%%%%%%%%%%%%%%
\section{the case $c(1-s) \rightarrow 0$}
%%%%%%%%%%%%%%%%%%%%%%%%%%%%%%%%%%%%%%%%%%%%%%%%%%%%
%%%%%%%%%%%%%%%%%%%%%%%%%%%%%%%%%%%%%%%%%%%%%%%%%%%%%%%%%%%%
In the previous sections, under the condition $c(1-s)\rightarrow L>0$, we get a L. D. P. with deviation function $\widetilde{I} \neq I_{BLT}$.
\newline

This raises the question: what happens if $c(1-s)\rightarrow 0$?
We will show here that  the final conclusion is quite different in this case. We have that:
\begin{lemma}\label{sto1}
For each continuous function $k:X\to \mathbb{R}$
\[\lim_{s \to 1} \mu_{c,s}(k) = \mu_{cf}(k),\]
where $\mu_{cf}$ is the invariant equilibrium state for $cf$
\end{lemma}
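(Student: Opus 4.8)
The plan is to show that the zeta probability $\mu_{c,s}$ converges weakly to the equilibrium state $\mu_{cf}$ as $s\to 1$ (with $c$ fixed), which is essentially the classical fact from Parry--Pollicott \cite{PP} that the zeta function encodes the Gibbs state in its pole at $s=1$. Since $c$ is fixed throughout, I would first absorb $cf$ into a single H\"older potential $\phi := cf - P(cf)$, so that $P(\phi)=0$ and, writing $\phi^n$ for Birkhoff sums, the defining formula becomes
\[
\int k\, d\mu_{c,s} \;=\; \frac{\sum_{n\ge 1}\frac{1}{n}\sum_{x\in Fix_n} e^{s\,\phi^n(x)+(s-1)nP(cf)}\,k^n(x)}{\sum_{n\ge 1}\frac{1}{n}\sum_{x\in Fix_n} e^{s\,\phi^n(x)+(s-1)nP(cf)}}.
\]
Actually it is cleaner to keep $cf$: set $g := sf$ won't help because the normalization $P(cf)$ stays fixed. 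So I would work directly with the weights $w_n(x,s) = e^{cs f^n(x) - nP(cf)}$ and note $P(csf - P(cf)) = P(csf) - P(cf)$, which is $<0$ for $s<1$ and $\to 0^-$ as $s\to 1$ by continuity and real-analyticity of $t\mapsto P(tf)$.

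The key steps, in order: (1) Recall the Ruelle--Perron--Frobenius machinery for the normalized transfer operator $L_{cf - P(cf)}$: there is a positive eigenfunction $h$, eigenvalue $1$, and the Gibbs/equilibrium state $\mu_{cf}$ with $\int k\, d\mu_{cf} = \lim_{n} \sum_{x\in Fix_n} e^{(cf-P(cf))^n(x)} k^n(x) \big/ \sum_{x\in Fix_n} e^{(cf-P(cf))^n(x)}$, together with the exponential estimate $\sum_{x\in Fix_n} e^{(cf-P(cf))^n(x)} = 1 + O(\rho^n)$ for some $\rho<1$, and more generally the standard asymptotics $\sum_{x\in Fix_n} e^{(cf-P(cf))^n(x)} k^n(x) = n\int k\, d\mu_{cf} + O(n\rho^n) + O(1)$ — these are exactly Theorem~5.5 / the zeta-function chapter of \cite{PP}. (2) For $s<1$, write $e^{csf^n(x) - nP(cf)} = e^{(cf-P(cf))^n(x)} \cdot e^{(s-1)cf^n(x)}$, and bound $e^{(s-1)cf^n(x)}$ between $e^{(s-1)cn\|f\|_\infty}$ and $e^{(s-1)cn|f|_-}$ (using $f>0$, $|f|_- := \inf f > 0$), so the extra factor lies in $[e^{-(1-s)cn\|f\|_\infty}, e^{-(1-s)cn|f|_-}]$ — uniformly geometric in $n$ with ratio $\to 1$ as $s\to 1$. (3) Feed these bounds into numerator and denominator and take $s\to 1$: the tail contributions $n\ge N$ are uniformly small (geometric with ratio bounded away from $1$ once $(1-s)c|f|_-$ is even slightly positive, combined with $|Fix_n|=d^n$ and the $e^{(cf-P(cf))^n}$ normalization absorbing the $d^n$), and for each fixed block $n\le N$ the factor $e^{(s-1)cf^n(x)}\to 1$. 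A dominated-convergence / Abelian-limit argument (this is really the Hardy--Littlewood Tauberian-style observation that $\sum_n a_n x^n / \sum_n b_n x^n \to (\lim a_n/b_n)$ as $x\to 1^-$ when the Cesàro averages converge) then gives
\[
\lim_{s\to 1}\int k\, d\mu_{c,s} \;=\; \lim_{N\to\infty}\frac{\sum_{n=1}^{N}\frac{1}{n}\sum_{x\in Fix_n} e^{(cf-P(cf))^n(x)}k^n(x)}{\sum_{n=1}^{N}\frac{1}{n}\sum_{x\in Fix_n} e^{(cf-P(cf))^n(x)}},
\]
and by step (1) the right-hand side is $\int k\, d\mu_{cf}$. (4) Finally extend from cylinder/continuous $k$ to all continuous $k$ by uniform approximation, exactly as in the proof of Theorem~\ref{teo}; and note that since all $\mu_{c,s}$ and $\mu_{cf}$ are $\sigma$-invariant Borel probabilities on the compact space $X$, pointwise convergence of integrals on a dense subalgebra of $C(X)$ gives weak$^*$ convergence.

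The main obstacle is making the interchange of limits in step (3) fully rigorous: one must control the $s\to 1$ limit \emph{uniformly in the tail} $n\to\infty$, since naively the series $\sum_n \frac1n \sum_{Fix_n} e^{csf^n - nP(cf)}$ diverges at $s=1$. The clean way is to observe that $P(csf)-P(cf)\to 0^-$ with a definite sign for $s<1$, so that for $s$ close to $1$ the partial sums are dominated by $\sum_n \frac1n e^{n(P(csf)-P(cf))}\cdot C$ which converges, and to split each sum at an index $N=N(s)\to\infty$ chosen so that the head converges to the Cesàro limit while the tail is $o(1)$; the estimate $e^{(s-1)cf^n}\le e^{-(1-s)c|f|_- n}$ together with $f>0$ is precisely what makes the tail summable with a ratio tending to (but staying below) $1$. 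Alternatively, one can quote directly that the zeta function $\zeta(s,z)$ has a simple pole at $s=1$ whose residue computation (done in \cite{PP}) yields $\mu_{cf}$; I would present the elementary splitting argument but remark that it is the Abelian half of the Parry--Pollicott analysis.
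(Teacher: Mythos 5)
There is a genuine gap in step (3), and it is exactly at the point you flag as "the main obstacle." The weight you must control is $e^{(s-1)cf^{n}(x)}$, which depends on the point $x$ and not only on $n$, so it is not of the form $x^{n}$ and the Abelian/Stolz--Ces\`aro statement "$\sum a_{n}x^{n}/\sum b_{n}x^{n}\to\lim a_{n}/b_{n}$" does not apply directly. Your proposed remedy — squeezing the factor between $x_{1}^{n}$ and $x_{2}^{n}$ with $x_{1}=e^{-(1-s)c|f|_{\infty}}$, $x_{2}=e^{-(1-s)c|f|_{-}}$ (where $|f|_{\infty}=\sup f$, $|f|_{-}=\inf f$) — is lossy in the limit: writing $a_{n}=\sum_{Fix_{n}}e^{(cf-P(cf))^{n}}\frac{k^{n}}{n}\to\int k\,d\mu_{cf}$ and $b_{n}=\sum_{Fix_{n}}e^{(cf-P(cf))^{n}}\to 1$, the squeeze only gives $\mu_{c,s}(k)\le \bigl(\sum_{n}a_{n}x_{2}^{n}\bigr)/\bigl(\sum_{n}b_{n}x_{1}^{n}\bigr)$, and since $\sum_{n}x_{2}^{n}/\sum_{n}x_{1}^{n}\to \frac{1-x_{1}}{1-x_{2}}\to |f|_{\infty}/|f|_{-}$ as $s\to 1$, you end up with $\frac{|f|_{-}}{|f|_{\infty}}\int k\,d\mu_{cf}\le\liminf\le\limsup\le\frac{|f|_{\infty}}{|f|_{-}}\int k\,d\mu_{cf}$, not the exact limit. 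The head/tail framing is also backwards: both series diverge at $s=1$, so as $s\to1$ essentially all the mass sits in indices $n$ of order $\frac{1}{(1-s)c}$ and beyond; for a fixed block $n\le N$ the factor indeed tends to $1$ but that block is negligible, while for $N=N(s)\to\infty$ comparable to $\frac{1}{(1-s)c}$ the factor $e^{(s-1)cf^{n}(x)}$ is \emph{not} close to $1$ on the dominant range. So neither dominated convergence nor the same-$x$ Abelian lemma closes the argument as written. A correct elementary repair would be to note that the normalization $e^{-nP(cf)}$ cancels in the ratio of the $n$-th terms, so $a_{n}(s)/b_{n}(s)=\bigl(\sum_{Fix_{n}}e^{csf^{n}}\frac{k^{n}}{n}\bigr)/\bigl(\sum_{Fix_{n}}e^{csf^{n}}\bigr)\to\int k\,d\mu_{csf}$, and then prove this convergence \emph{uniformly in $s$ near $1$} (uniform spectral gap for $L_{csf}$), combine it with continuity of $s\mapsto\int k\,d\mu_{csf}$ and with the divergence of the denominator; you do not do this, and it is the whole point.

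The paper avoids the issue entirely by your "alternative" remark, made precise: it introduces $\alpha(s,z)=\exp\sum_{n}\frac{1}{n}\bigl[\sum_{Fix_{n}}e^{csf^{n}+zk^{n}-nP(cf)}-e^{nP(csf+zk-P(cf))}\bigr]$, which by Parry--Pollicott (Theorem 5.5(ii)) is analytic up to $s=1$, and uses the identity $\alpha(s,z)=\zeta(s,z)\,(1-e^{P(csf+zk-P(cf))})$ to get $\frac{\zeta_{2}(s,0)}{\zeta(s,0)}=\frac{\alpha_{2}(s,0)}{\alpha(s,0)}+\frac{e^{P(csf)-P(cf)}}{1-e^{P(csf)-P(cf)}}\int k\,d\mu_{csf}$; the divergent part is thus isolated with the \emph{exact} coefficient $\int k\,d\mu_{csf}$, and taking the quotient with the $k\equiv 1$ case and letting $s\to1$ gives the lemma. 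If you want to keep your outline, either carry out the uniform-in-$s$ periodic-orbit estimate indicated above, or promote your closing aside (simple pole/residue at $s=1$) from a remark to the actual proof along the paper's lines. Also, a small slip in step (1): the ratio $\sum_{Fix_{n}}e^{(cf-P(cf))^{n}}k^{n}\big/\sum_{Fix_{n}}e^{(cf-P(cf))^{n}}$ is asymptotic to $n\int k\,d\mu_{cf}$, so the factor $\frac{1}{n}$ must be kept on $k^{n}$.
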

From this lemma, it is not surprise that:

\begin{proposition}\label{zero}
Suppose that $\mu_\infty \in M_{max}(f)$ is unique and $X=\{0,1\}^{\mathbb{N}}$.
If $c(1-s)\rightarrow 0$ fast enough, then for all cylinder $k$
\[\lim_{c(1-s)\rightarrow 0 } \,\,\frac{1}{c}\log (\mu_{c,s}(k)) = \lim_{c\to\infty} \frac{1}{c}\log(\mu_{c\,f}(k)) = -\inf_{x\in k}I_{BLT}(x),\]
where $\mu_{cf}$ is the invariant equilibrium state for $cf$.
\end{proposition}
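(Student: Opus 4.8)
The plan is to combine Lemma~\ref{sto1} (which says $\mu_{c,s}\to\mu_{cf}$ weakly as $s\to1$ with $c$ fixed) with the known Large Deviation Principle for $\mu_{cf}$ from \cite{BLT}, by a careful quantitative interleaving of the two limits. The heart of the matter is that when $c(1-s)\to0$ ``fast enough'', the error committed in replacing $\mu_{c,s}(k)$ by $\mu_{cf}(k)$ is of order $e^{o(c)}$, so it washes out after taking $\frac1c\log(\cdot)$ and letting $c\to\infty$. Thus I first need an \emph{effective} version of Lemma~\ref{sto1}: a bound of the form
\[
\bigl|\mu_{c,s}(k)-\mu_{cf}(k)\bigr|\le \Phi(c,s),
\]
where $\Phi(c,s)\to0$ in a way I can control as a function of $c(1-s)$. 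To get this I would go back to the zeta-function expression defining $\mu_{c,s}$, write the numerator and denominator series and compare them termwise with the corresponding objects at $s=1$ (which, via the standard Ruelle--Perron--Frobenius / zeta-function machinery in \cite{PP}, reproduce $\mu_{cf}$). The factor $e^{cs f^n(x)-nP(cf)}$ differs from $e^{cf^n(x)-nP(cf)}$ by $e^{c(s-1)f^n(x)}$; since $0<|f|_-\le f\le|f|_\infty$ we have $e^{-c(1-s)\,n|f|_\infty}\le e^{c(s-1)f^n(x)}\le e^{-c(1-s)\,n|f|_-}<1$, so each term is perturbed by a multiplicative factor in an interval shrinking to $\{1\}$ at a rate governed by $c(1-s)$. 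Summing against the (summable, because $P(cf-P(cf))<0$) weights, this yields $\Phi(c,s)$ bounded by something like $K_c\,c(1-s)$, where $K_c$ may grow with $c$ but only polynomially/subexponentially — and this is exactly what the phrase ``$c(1-s)\to0$ fast enough'' is meant to absorb: one chooses $s=s_c$ so that $c(1-s_c)\,K_c\to0$, or more crudely so that $\frac1c\log\Phi(c,s_c)\to0$.

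Next I would assemble the chain of equalities. Write
\[
\frac1c\log\mu_{c,s}(k)=\frac1c\log\mu_{cf}(k)+\frac1c\log\!\Bigl(\frac{\mu_{c,s}(k)}{\mu_{cf}(k)}\Bigr),
\]
and argue that the second term tends to $0$. For the ratio, I use a two-sided bound: $\mu_{cf}(k)\ge e^{-c(\inf_{k}I_{BLT}+o(1))}$ from the lower LDP bound in \cite{BLT}, so as long as $\Phi(c,s_c)=o\bigl(e^{-c(\inf_k I_{BLT}+1)}\bigr)$ — again a ``fast enough'' condition on the speed of $c(1-s)\to0$ — the additive error $\Phi(c,s_c)$ is negligible compared with $\mu_{cf}(k)$ itself, hence the log-ratio is $o(1)$. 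Then the middle term $\frac1c\log\mu_{cf}(k)$ converges to $-\inf_{x\in k}I_{BLT}(x)$ by the LDP of \cite{BLT} (here uniqueness of $\mu_\infty$ and $X=\{0,1\}^{\mathbb N}$ are exactly the hypotheses under which that LDP is stated). This gives all three quantities in the displayed formula equal, which is the claim.

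I would also spell out, as the paper promises (``we want to present a sufficient analytic estimate that allows one to find $s_c$ as a function of $c$''), an explicit admissible schedule: e.g. it suffices that $c(1-s_c)\le e^{-c\,M}$ for a constant $M$ exceeding $\sup_k \inf_{k}I_{BLT}$ over the finitely many cylinders of any fixed level plus a safety margin — or, since one only needs the conclusion cylinder-by-cylinder, for each fixed cylinder $k$ one needs $c(1-s_c)=o(e^{-cM_k})$ with $M_k>\inf_k I_{BLT}$. The honest statement of the proposition should perhaps quantify ``fast enough'' per cylinder, which is harmless since the LDP is formulated cylinder-by-cylinder anyway.

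The main obstacle I anticipate is making the effective version of Lemma~\ref{sto1} genuinely quantitative with control on how the constant $K_c$ depends on $c$: the naive termwise comparison gives a factor that could in principle blow up (the spectral gap of the transfer operator for $cf$ degenerates as $c\to\infty$, which is precisely the temperature-zero phenomenon), and one must check that this blow-up is at most exponential in $c$ with a rate that can be beaten by choosing $s_c$ appropriately. A clean way around this is to avoid uniform spectral estimates entirely and instead bound $\mu_{c,s}(k)-\mu_{cf}(k)$ directly through the series representation, using only $0<|f|_-\le f\le|f|_\infty$ and the convergence of $\sum_n\sum_{x\in Fix_n}e^{cf^n(x)-nP(cf)}$ to a finite value $Z_c$: then $K_c$ can be taken comparable to $|f|_\infty\cdot(\text{derivative-type bound on }Z_c\text{ in }s)$, which one estimates crudely by differentiating the dominating geometric series. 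That keeps everything elementary and makes the ``fast enough'' hypothesis explicit and verifiable.
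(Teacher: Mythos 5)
There is a genuine gap, and it sits exactly where you placed your ``clean way around'': your proposed elementary bound rests on ``the convergence of $\sum_{n}\sum_{x\in Fix_n}e^{cf^{n}(x)-nP(cf)}$ to a finite value $Z_c$'', but this series \emph{diverges}. Since $P(cf-P(cf))=0$, one has $\sum_{x\in Fix_n}e^{cf^{n}(x)-nP(cf)}\to 1$ as $n\to\infty$ (this is precisely the content of Theorem 5.5 of \cite{PP}, used in the paper's Lemma 17), so the general term does not tend to zero; both the numerator and the denominator defining $\mu_{c,s}(k)$ blow up as $s\to 1^-$, and $\mu_{cf}(k)$ only appears as the limit of a ratio of two divergent quantities. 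Consequently there is no ``corresponding object at $s=1$'' against which to compare termwise, and the additive estimate $|\mu_{c,s}(k)-\mu_{cf}(k)|\le K_c\,c(1-s)$ with subexponentially controlled $K_c$ does not follow from your sketch. A genuine quantitative version would require uniform-in-$c$ control of the regularized zeta function (equivalently of the spectral gap of $L_{cf}$), which, as you yourself note, degenerates as $c\to\infty$; your explicit schedule $c(1-s_c)\le e^{-cM}$ is therefore unsupported. The indispensable step is the pole subtraction: the paper works with $\alpha(s,z)=\zeta(s,z)\bigl(1-e^{P(csf+zk-P(cf))}\bigr)$, which \emph{is} analytic up to $s=1$, and from it derives the exact identity (\ref{3}) expressing $\mu_{c,s}(k)$ as $\int k\,d\mu_{csf}$ plus correction terms carrying the explicit prefactor $\frac{1-e^{P(csf)-P(cf)}}{e^{P(csf)-P(cf)}}$.

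The paper then avoids all uniform estimates by a soft, per-cylinder choice: for each fixed $c$ and each cylinder $k_i$ with $\mu_\infty(k_i)=0$, analyticity of $\alpha_2/\alpha$ and $\beta_2/\beta$ at $s=1$ together with $\frac{1-e^{P(csf)-P(cf)}}{e^{P(csf)-P(cf)}}\to 0$ (as $s\to1$, $c$ fixed) yields an $s_c^i$ with $c(1-s_c^i)\to0$ and $\tfrac13\int k\,d\mu_{csf}\le\mu_{c,s}(k)\le 3\int k\,d\mu_{csf}$ for $s_c^i<s<1$; the multiplicative factor $3$ is killed by $\frac1c\log$, the BLT large deviation principle is applied along the parameter $cs\to\infty$ (note the comparison is with $\mu_{csf}$, not $\mu_{cf}$), and finally one diagonalizes over the countably many cylinders to get a single schedule $c\mapsto s_c$. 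A soft variant of your plan would also work --- for each fixed $c$ use Lemma \ref{sto1} and $\mu_{cf}(k)>0$ to pick $s_c$ with $|\mu_{c,s}(k)-\mu_{cf}(k)|\le\tfrac12\mu_{cf}(k)$ and $c(1-s_c)\to0$, then diagonalize --- but that abandons the explicit ``analytic estimate'' you aimed for and is essentially the paper's argument in disguise; as written, your quantitative core lemma is not established and its proposed derivation fails.
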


We will need some results presented in \cite{PP}.

\begin{lemma}
Consider $g_{0}$ a real Lipschitz potential in $F_{\theta}$.
\newline
a)If $P(g_{0})<0$, then, for $g$ close by $g_{0}$ (in the Lipschitz norm)
\[\sum_{n=1}^{\infty}\frac{1}{n}\,|\sum_{\text{Fix}_{n}}e^{g^{n}(x)}\,| < \infty.\]
b) If $P(g_{0})=0$  then, for $g$ close by $g_{0}$ (in the Lipschitz norm)
\[\sum_{n=1}^{\infty}\frac{1}{n}\left|\sum_{x\in \text{Fix}_{n}}e^{g^{n}(x)}-e^{nP(g)}\right| < \infty.\]
\end{lemma}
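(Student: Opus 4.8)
The plan is to reduce both statements to standard spectral properties of the Ruelle--Perron--Frobenius (transfer) operator $\mathcal{L}_g$ acting on $F_\theta$, exactly as in Chapter 5 of \cite{PP}. The starting point is the identity, valid for any Lipschitz $g$, relating the periodic-orbit sums to the trace of iterates of the transfer operator: one shows that $\sum_{x\in \text{Fix}_n} e^{g^n(x)}$ is comparable, up to controlled error, to $e^{nP(g)}$ times a factor converging to $1$, via the fact that $\mathcal{L}_g^n$ has leading eigenvalue $e^{nP(g)}$ with the remainder of the spectrum contained in a disk of strictly smaller radius (spectral gap, from the Ruelle operator theorem). Concretely, from \cite{PP} one has $\sum_{x\in \text{Fix}_n} e^{g^n(x)} = e^{nP(g)}\bigl(1 + O(\rho^n)\bigr)$ for some $\rho = \rho(g) \in (0,1)$, where $\rho$ can be chosen uniformly on a Lipschitz-norm neighborhood of $g_0$ because the spectral gap is stable under small perturbations of the potential (the eigenprojections depend continuously on $g$).

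For part (a), assume $P(g_0) < 0$. Since $g \mapsto P(g)$ is continuous (indeed real-analytic) in the Lipschitz norm, there is a neighborhood of $g_0$ and a constant $\gamma > 0$ with $P(g) < -\gamma < 0$ for all $g$ in it. Then $\bigl|\sum_{\text{Fix}_n} e^{g^n(x)}\bigr| \leq e^{nP(g)}(1 + C\rho^n) \leq C' e^{-\gamma n}$, so $\sum_n \frac1n \bigl|\sum_{\text{Fix}_n} e^{g^n(x)}\bigr| \leq C' \sum_n \frac1n e^{-\gamma n} < \infty$. For part (b), assume $P(g_0) = 0$. Here one cannot discard the leading term, so instead one writes
\[
\sum_{x\in \text{Fix}_n} e^{g^n(x)} - e^{nP(g)} = e^{nP(g)}\cdot O(\rho^n),
\]
using the same asymptotic expansion, and then $\sum_n \frac1n \bigl|\sum_{\text{Fix}_n} e^{g^n(x)} - e^{nP(g)}\bigr| \leq C \sum_n \frac1n e^{nP(g)}\rho^n$. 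For $g$ close to $g_0$ we have $P(g)$ close to $0$, hence $e^{P(g)}\rho < 1$ provided the neighborhood is small enough (since $\rho(g_0) < 1$ and $\rho$ is upper semicontinuous, shrink the neighborhood so that $\rho(g) < \rho_0 < 1$ uniformly and $e^{P(g)} < 1/\rho_0$), and the series converges geometrically.

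The main obstacle is establishing the uniform geometric error term $O(\rho^n)$ with $\rho$ bounded away from $1$ on a whole Lipschitz-norm neighborhood of $g_0$, rather than just at $g_0$ itself; this is where one must invoke the stability of the spectral gap of $\mathcal{L}_g$ under perturbation. I would handle this by citing the relevant estimates from \cite{PP} (Chapter 5) directly, since they already package the trace asymptotics together with their uniformity in the potential, and then the two convergence statements follow by the elementary summation arguments above. Everything else — continuity of the pressure, the comparison of periodic sums to transfer-operator traces — is standard and quotable.
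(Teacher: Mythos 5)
Your proposal is correct and takes essentially the same route as the paper, whose proof of this lemma is simply a citation of Theorems 5.4 and 5.5(ii) in Chapter 5 of \cite{PP} (together with the remark that $\rho(L_{g})=e^{P(g)}$ for real $g$). The uniform spectral-gap asymptotics $\sum_{x\in \text{Fix}_{n}}e^{g^{n}(x)}=e^{nP(g)}\bigl(1+O(\rho^{n})\bigr)$ on a Lipschitz neighborhood of $g_{0}$, which you invoke from \cite{PP} and then sum, is precisely the content packaged in those cited theorems, so your elementary summation steps just make explicit what the paper quotes wholesale.
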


\begin{proof}
For  a) see page 80, Theo. 5.4 \cite{PP} and
 for b) see  page 81 Theo. 5.5 (ii) \cite{PP}. Note that for a real  $g$ we have the spectral radius $\rho(L_{g})=e^{P(g)}.$
\end{proof}

 Note that for $s \in (0,1)$ we have $P(csf-P(cf))<0$, therefore, for any $k\in F_{\theta}$ fixed
\[(s,z) \rightarrow \sum_{n=1}^{\infty}\frac{1}{n}\sum_{Fix_{n}}e^{csf^{n}+zk^{n}-P(cf)n},\]
is analytic for $s \in (0,1)$ and $|z|$ small (in a small neighborhood that depends of $s$ and $c$). When convenient $z$ will be real.
\newline
From this the function
\[\zeta(s,z)=\exp\left( \sum_{n=1}^{\infty}\frac{1}{n}\sum_{Fix_{n}}e^{csf^{n}+zk^{n}-P(cf)n} \right),\]
 is not zero at $z=0$, and is analytic for $s \in (0,1)$ and $|z|$ small (in this small neighborhood that depends on  $s$ and $c$). Therefore:

\begin{proposition}
If we denote the partial derivative of  $\zeta$ in the variable $z$ by $\zeta_{2}$, then
\[\frac{\zeta_{2}(s,0)}{\zeta(s,0)} = \sum_{n=1}^{\infty}\sum_{x\in Fix_{n}}e^{csf^{n}-nP(cf)}\frac{k^{n}}{n}\]
is analytic for $s\in(0,1)$.
\end{proposition}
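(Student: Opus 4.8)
The plan is to exploit the identity $\zeta = e^{\Phi}$, where
\[\Phi(s,z) := \sum_{n=1}^{\infty}\frac{1}{n}\sum_{x\in Fix_{n}}e^{csf^{n}(x)+zk^{n}(x)-nP(cf)}\]
is exactly the exponent appearing in the definition of the zeta function. Since $\zeta$ is analytic and nowhere zero on the region described in the preceding proposition (for $s\in(0,1)$ and $|z|$ small, in a neighborhood depending on $s$ and $c$), the chain rule gives $\zeta_{2}=\Phi_{2}\,e^{\Phi}=\Phi_{2}\,\zeta$, hence $\zeta_{2}/\zeta=\Phi_{2}$ there; in particular $\zeta_{2}(s,0)/\zeta(s,0)=\Phi_{2}(s,0)$.

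Next I would justify differentiating $\Phi$ term by term in $z$. For fixed $c$ and $s\in(0,1)$ one has $P(csf-P(cf))<0$, so by part (a) of the lemma quoted from \cite{PP} the series $\Phi$ converges absolutely, and (as already used to establish its analyticity) uniformly on compact subsets of the bidisk of analyticity; therefore $\partial_{z}$ may be interchanged with the double sum. Differentiating the generic term gives $\frac{k^{n}(x)}{n}\,e^{csf^{n}(x)+zk^{n}(x)-nP(cf)}$, and evaluating at $z=0$ yields $\frac{k^{n}(x)}{n}\,e^{csf^{n}(x)-nP(cf)}$. Summing over $x\in Fix_{n}$ and over $n\ge 1$ reproduces precisely the series in the statement, so
\[\frac{\zeta_{2}(s,0)}{\zeta(s,0)}=\Phi_{2}(s,0)=\sum_{n=1}^{\infty}\sum_{x\in Fix_{n}}e^{csf^{n}(x)-nP(cf)}\,\frac{k^{n}(x)}{n}.\]

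Finally, for analyticity in $s$: the preceding proposition already records that $(s,z)\mapsto\Phi(s,z)$ is jointly analytic on a set of the form $(s_{0}-\delta,s_{0}+\delta)\times\{|z|<r\}$ for every $s_{0}\in(0,1)$ (with $\delta,r$ depending on $s_{0}$ and $c$). The partial derivative in $z$ of a jointly analytic function is again jointly analytic, so $s\mapsto\Phi_{2}(s,0)$ is analytic on a neighborhood of each $s_{0}\in(0,1)$; since analyticity is a local property and $s_{0}$ is arbitrary, $s\mapsto\zeta_{2}(s,0)/\zeta(s,0)$ is analytic on all of $(0,1)$.

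The only delicate points are the interchange of $\partial_{z}$ with the infinite sum and the fact that the domain of analyticity in $z$ shrinks as $s\to 1$; both are harmless, because for each \emph{fixed} $s\in(0,1)$ the strict negativity $P(csf-P(cf))<0$ supplies the local uniform convergence required, and analyticity on the open interval $(0,1)$ only needs a neighborhood of each interior point.
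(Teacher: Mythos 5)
Your proposal is correct and takes essentially the same route as the paper: there the proposition is deduced directly from the observation, made just before its statement, that the exponent $\Phi(s,z)=\sum_{n\ge 1}\frac{1}{n}\sum_{x\in Fix_{n}}e^{csf^{n}(x)+zk^{n}(x)-nP(cf)}$ is analytic (because $P(csf-P(cf))<0$) and that $\zeta=\exp\Phi$ is nonzero at $z=0$, so that $\zeta_{2}/\zeta=\Phi_{2}$. Your write-up merely makes explicit the routine steps (term-by-term differentiation in $z$ and locality of analyticity in $s$) that the paper leaves implicit.
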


Moreover:
\begin{proposition}
For each real value $c$:
\newline
i) the function
\[\alpha(s,z) = \exp \left( \sum_{n=1}^{\infty}\frac{1}{n}\left[\left(\sum_{Fix_{n}}e^{csf^{n}+zk^{n}-P(cf)n}\right)-e^{nP(csf+zk-P(cf))}\right]\right),\]
is analytic for $s\in (0,1]$ and $z$ in a small neighborhood that depends on $s$ and $c$.
\newline
ii) For $s\in(0,1)$ and  $z$ in  a small neighborhood that depends on $s$ and $c$
\[\alpha(s,z)=\zeta(s,z)(1-e^{P(csf+zk-P(cf))}).\]
\end{proposition}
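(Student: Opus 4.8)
The plan is to prove (ii) first by pure formal manipulation of the defining exponential series, and then deduce (i) from the known analyticity of $\zeta$ for $s\in(0,1)$ together with a separate argument that extends analyticity across $s=1$. For (ii), I would start from the two defining identities
\[
\zeta(s,z)=\exp\left(\sum_{n=1}^{\infty}\frac{1}{n}\sum_{x\in Fix_{n}}e^{csf^{n}(x)+zk^{n}(x)-nP(cf)}\right),
\]
\[
\alpha(s,z)=\exp\left(\sum_{n=1}^{\infty}\frac{1}{n}\left[\left(\sum_{x\in Fix_{n}}e^{csf^{n}(x)+zk^{n}(x)-nP(cf)}\right)-e^{nP(csf+zk-P(cf))}\right]\right),
\]
valid for $s\in(0,1)$ and $|z|$ small. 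Subtracting the two exponents gives
\[
\frac{\zeta(s,z)}{\alpha(s,z)}=\exp\left(\sum_{n=1}^{\infty}\frac{1}{n}\,e^{nP(csf+zk-P(cf))}\right).
\]
Since $P(csf-P(cf))<0$ for $s\in(0,1)$, by continuity of the pressure one has $P(csf+zk-P(cf))<0$ for $|z|$ small, so the exponent is $-\log\!\bigl(1-e^{P(csf+zk-P(cf))}\bigr)$ using $\sum_{n\ge1}t^n/n=-\log(1-t)$ for $|t|<1$. Hence $\zeta(s,z)/\alpha(s,z)=\bigl(1-e^{P(csf+zk-P(cf))}\bigr)^{-1}$, which rearranges to the claimed identity $\alpha(s,z)=\zeta(s,z)\bigl(1-e^{P(csf+zk-P(cf))}\bigr)$.

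For (i), the key point is that the series defining $\log\alpha$ has better convergence than the one defining $\log\zeta$: the subtraction of $e^{nP(csf+zk-P(cf))}$ removes the dominant spectral term. Concretely I would invoke the Lemma quoted from \cite{PP} (part b): at $s=1$, $z=0$ we have $P(cf - P(cf))=0$, so for $g=csf+zk-P(cf)$ close to $g_0=cf-P(cf)$ in the Lipschitz norm — i.e. for $s$ near $1$ and $|z|$ small — the series $\sum_n \frac1n\bigl|\sum_{Fix_n}e^{g^n(x)}-e^{nP(g)}\bigr|$ converges. For $s\in(0,1)$ near $1$ one likewise has $P(g_0)<0$ and part a) of that Lemma, or simply the fact already recorded that each term is analytic, gives absolute convergence. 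Uniform absolute convergence on compact subsets of the parameter domain, together with analyticity of each term $\frac1n[(\sum_{Fix_n}e^{csf^n+zk^n-nP(cf)})-e^{nP(csf+zk-P(cf))}]$ in $(s,z)$ (here one uses that $s\mapsto P(csf+zk-P(cf))$ is real-analytic by the standard analytic perturbation theory of the Ruelle operator, since $e^{P(g)}$ is a simple isolated eigenvalue), yields that $\log\alpha$, hence $\alpha$, is analytic for $s\in(0,1]$ and $z$ in a small neighborhood depending on $s$ and $c$.

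The main obstacle is the analyticity across the boundary point $s=1$: for $s<1$ everything follows from $P(csf-P(cf))<0$ and the already-stated analyticity of $\zeta$, but at $s=1$ the zeta function itself has a pole-type singularity (the factor $1-e^{P(csf+zk-P(cf))}$ vanishes), and one must check that the cancellation in the definition of $\alpha$ is exactly enough to cure it. This is precisely where part b) of the \cite{PP} Lemma is essential — it is the statement that, after removing the spectral term $e^{nP}$, the remaining sum over $Fix_n$ decays geometrically in $n$ uniformly for $g$ near $g_0$, which is what makes $\log\alpha$ a locally uniformly convergent series of analytic functions up to and including $s=1$. A secondary technical point is confirming that $(s,z)\mapsto P(csf+zk-P(cf))$ extends analytically to a neighborhood of $s=1$; this is standard via the implicit function theorem applied to the leading eigenvalue of the transfer operator $L_{csf+zk-P(cf)}$, which is simple by the Ruelle–Perron–Frobenius theorem, and I would cite \cite{PP} for it rather than reprove it.
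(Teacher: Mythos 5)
Your proposal is correct and follows essentially the same route as the paper: part (ii) by the formal identity $\sum_{n\ge 1}t^n/n=-\log(1-t)$ applied to $t=e^{P(csf+zk-P(cf))}<1$, and part (i) by combining (ii) with analyticity of $P$ (and $\zeta$) for $s\in(0,1)$, while handling $s=1$ via part (b) of the quoted lemma from \cite{PP} together with analyticity of $\exp$. Your remarks on locally uniform convergence and on the analyticity of $(s,z)\mapsto P(csf+zk-P(cf))$ only make explicit what the paper leaves implicit.
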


\begin{proof}
 For ii) we just have to  use  \[\sum_{n=1}^{\infty}\frac{1}{n}z^{n}= -\log(1-z),|z|<1\]
 Therefore, for $s\in(0,1)$ we have $P(csf+zk-P(cf))<0$, for $z$   in a small neighborhood that depends on $s$ and $c$. In particular,  $e^{P(csf+zk-P(cf))}<1$ and we can write
\[\log(1-e^{P(csf+zk-P(cf))})=-\sum_{n=1}^{\infty}\frac{1}{n}e^{nP(csf+zk-P(cf))}.\]
It follows that:

\begin{align*}
\alpha(s,z) &= \exp \left( \sum_{n=1}^{\infty}\frac{1}{n}\left[\left(\sum_{FIX_{n}}e^{csf^{n}+zk^{n}-P(cf)n}\right)-e^{nP(csf+zk-P(cf))}\right]\right)\\
&= \exp\left( \sum_{n=1}^{\infty}\frac{1}{n}\sum_{Fix_{n}}e^{csf^{n}+zk^{n}-P(cf)n} \right) \exp\left(-\sum_{n=1}^{\infty}\frac{1}{n}e^{nP(csf+zk-P(cf))}\right)\\
&= \zeta(s,z)\,e^{\log(1-e^{P(csf+zk-P(cf))}}\\
&= \zeta(s,z)\,(1-e^{P(csf+zk-P(cf))}).
\end{align*}
 Now we prove i).
\newline
When  $s\in(0,1)$, we just have to use ii) and the fact that $P$ is analytic.
When $s=1$ we have to use the previous lemma (b) and the fact that $\exp$ is analytic.
\end{proof}

As $\alpha(s,0)\neq 0$, we can calculate $\frac{\alpha_{2}(s,0)}{\alpha(s,0)}$. Then we get:

\begin{lemma}\label{alpha}
The function $\frac{\alpha_{2}(s,0)}{\alpha(s,0)}$ is analytic for  $s\in(0,1]$ and $z$ in a small neighborhood that depends on $s$ and $c$.
\newline
For $s\in(0,1)$
\[\frac{\alpha_{2}(s,0)}{\alpha(s,0)} =\frac{\zeta_{2}(s,0)}{\zeta(s,0)} -\frac{e^{P(csf)-P(cf)}}{1-e^{P(csf)-P(cf)}}\int k d\mu_{csf}.\]
\end{lemma}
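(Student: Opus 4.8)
The statement to prove is Lemma~\ref{alpha}: that $\frac{\alpha_2(s,0)}{\alpha(s,0)}$ is analytic for $s\in(0,1]$, and that for $s\in(0,1)$ one has the identity
\[\frac{\alpha_{2}(s,0)}{\alpha(s,0)} =\frac{\zeta_{2}(s,0)}{\zeta(s,0)} -\frac{e^{P(csf)-P(cf)}}{1-e^{P(csf)-P(cf)}}\int k d\mu_{csf}.\]

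The plan is to differentiate the factorization $\alpha(s,z)=\zeta(s,z)\bigl(1-e^{P(csf+zk-P(cf))}\bigr)$ from part (ii) of the previous Proposition with respect to $z$, evaluate at $z=0$, and take the logarithmic derivative. First I would set $Q(s,z):=P(csf+zk-P(cf))$, so that for $s\in(0,1)$ we have $Q(s,0)=P(csf)-P(cf)<0$ and $1-e^{Q(s,0)}\neq 0$. The product rule gives $\alpha_2(s,0)=\zeta_2(s,0)\bigl(1-e^{Q(s,0)}\bigr)+\zeta(s,0)\bigl(-e^{Q(s,0)}Q_2(s,0)\bigr)$, and dividing by $\alpha(s,0)=\zeta(s,0)\bigl(1-e^{Q(s,0)}\bigr)$ yields
\[\frac{\alpha_2(s,0)}{\alpha(s,0)}=\frac{\zeta_2(s,0)}{\zeta(s,0)}-\frac{e^{Q(s,0)}}{1-e^{Q(s,0)}}Q_2(s,0).\]
It remains only to identify $Q_2(s,0)=\partial_z P(csf+zk-P(cf))\big|_{z=0}$ with $\int k\,d\mu_{csf}$. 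This is the standard first-derivative formula for topological pressure: $\frac{d}{dt}P(g+tk)\big|_{t=0}=\int k\,d\mu_g$, where $\mu_g$ is the equilibrium state of $g$; here $g=csf-P(cf)$, whose equilibrium state coincides with that of $csf$ (adding a constant does not change the equilibrium state), namely $\mu_{csf}$. This gives exactly the claimed identity.

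For the analyticity claim on the full interval $s\in(0,1]$, I would argue as follows. Part (i) of the previous Proposition already asserts that $\alpha(s,z)$ is analytic jointly in $(s,z)$ for $s\in(0,1]$ and $z$ small (the point being that the series defining $\alpha$ converges even at $s=1$, via part (b) of the Parry--Pollicott lemma, whereas $\zeta$ itself may blow up as $s\to1$). Since $\alpha(s,0)\neq0$ — which holds because $\alpha$ is an exponential — the quotient $\frac{\alpha_2(s,0)}{\alpha(s,0)}$ is analytic in $s$ on $(0,1]$ as a ratio of analytic functions with non-vanishing denominator; differentiating an analytic function of two variables in $z$ and restricting to $z=0$ preserves analyticity in $s$.

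The main obstacle — really the only non-bookkeeping step — is the justification that $\partial_z P(csf+zk-P(cf))\big|_{z=0}=\int k\,d\mu_{csf}$, i.e., the differentiability of the pressure functional and the formula for its derivative. This is classical (see \cite{PP}, the analyticity of pressure for Hölder/Lipschitz potentials and the identification of its gradient with the equilibrium-state integral), so I would simply invoke it; the rest is the product rule and the observation that the factorization in part (ii) of the preceding Proposition makes everything algebraic. One should also note that the displayed identity is only asserted for $s\in(0,1)$, precisely because $1-e^{Q(s,0)}$ may vanish at $s=1$ (when $P(csf)\to P(cf)$), so no care is needed there for the identity itself — only the analyticity statement extends to $s=1$, and that comes directly from part (i).
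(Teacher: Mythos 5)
Your proposal is correct and follows essentially the same route as the paper: differentiate the factorization $\alpha(s,z)=\zeta(s,z)\bigl(1-e^{P(csf+zk-P(cf))}\bigr)$ at $z=0$, divide by $\alpha(s,0)$, and invoke the classical pressure-derivative formula $\partial_z P(csf+zk)\big|_{z=0}=\int k\,d\mu_{csf}$ (the paper cites \cite{PP}, p.~60), with the analyticity on $(0,1]$ coming from part (i) of the preceding Proposition together with $\alpha(s,0)\neq 0$. Your added remark that subtracting the constant $P(cf)$ does not change the equilibrium state is a small clarification the paper leaves implicit, but the argument is the same.
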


\begin{proof}
We remark that
$\left. \frac{\partial P(csf+zk)}{\partial z}\right| _{z=0}=\int k d\mu_{csf}$ (see \cite{PP} page 60).
\newline
For $s\in(0,1)$ we have \[\alpha(s,z)=\zeta(s,z)(1-e^{P(csf+zk-P(cf))}),\]
then
\begin{align*}
\frac{\alpha_{2}(s,0)}{\alpha(s,0)} &= \frac{\zeta_{2}(s,0)(1-e^{P(csf)-P(cf)})-\zeta(s,0)e^{P(csf)-P(cf)}\left. \frac{\partial P(csf+zk)}{\partial z}\right| _{z=0}}{\zeta(s,0)(1-e^{P(csf)-P(cf)})}\\
&= \frac{\zeta_{2}(s,0)}{\zeta(s,0)} - \frac{e^{P(csf)-P(cf)}}{1-e^{P(csf)-P(cf)}}\left. \frac{\partial P(csf+zk)}{\partial z}\right| _{z=0}\\
&= \frac{\zeta_{2}(s,0)}{\zeta(s,0)} -\frac{e^{P(csf)-P(cf)}}{1-e^{P(csf)-P(cf)}}\int k d\mu_{csf}.
\end{align*}
\end{proof}
%%%%%%%%%%%%%%%%%%%%%%%%%%%%%%%%%%%%%%%%%%%%%
%%%%%%%%%%%%%%%%%%%%%%%%%
%%%%%%%%%%%%%%%%%%%%
Now we will show the proof of Lemma \ref{sto1}

\begin{proof}
Fix a cylinder $k$. We know that
\begin{align*}
\sum_{n=1}^{\infty}\sum_{x\in Fix_{n}}e^{csf^{n}-nP(cf)}\frac{k^{n}}{n} &=
\frac{\zeta_{2}(s,0)}{\zeta(s,0)}\\
&=  \frac{\alpha_{2}(s,0)}{\alpha(s,0)} + \frac{e^{P(csf)-P(cf)}}{1-e^{P(csf)-P(cf)}}\int k d\mu_{csf}.
\end{align*}
This means
\begin{align*}
\frac{1-e^{P(csf)-P(cf)}}{e^{P(csf)-P(cf)}}\sum_{n=1}^{\infty}\sum_{x\in Fix_{n}}e^{csf^{n}-nP(cf)}\frac{k^{n}}{n} =  \frac{1-e^{P(csf)-P(cf)}}{e^{P(csf)-P(cf)}}\frac{\alpha_{2}(s,0)}{\alpha(s,0)} + \int k d\mu_{csf}.
\end{align*}
We can also consider  the same reasoning for $k \equiv 1$. Now, taking the quotient we get
\begin{equation}
\mu_{c,s}(k) =  \huge \frac{\frac{1-e^{P(csf)-P(cf)}}{e^{P(csf)-P(cf)}}\frac{\alpha_{2}(s,0)}{\alpha(s,0)} + \int k d\mu_{csf}}{\frac{1-e^{P(csf)-P(cf)}}{e^{P(csf)-P(cf)}}\frac{\beta_{2}(s,0)}{\beta(s,0)} + 1} \normalsize,
\label{3}
\end{equation}
where $\beta$ represents the function $\alpha$ when $k\equiv 1$.

It is known that for fixed $c$, the value $\int k d\mu_{csf}$ depends in a continuous way on $s$ (it's the derivative in $z=0$ of $P(c\,s\,f + z\,k$).
Then when we take $s \to 1$ on $(\ref{3})$ we have
\[\lim_{s\to1}\mu_{c,s}(k)=\mu_{cf}(k).\]
Now, when $g:X\to\mathbb{R}$ is continuous, we approximate  by functions that can be written as linear combinations of characteristic functions of   cylinders and repeat the argument on proof of Theorem \ref{teo}.
So
\[g \to \lim_{s\to1}\mu_{c,s}(g)\]
is a measure and have the same value that $\mu_{cf}$ on cylinders. Then
\[\lim_{s\to1}\mu_{c,s}(g)=\mu_{cf}(g).\]
\end{proof}

Now we will show the proof of Proposition \ref{zero}

\begin{proof}
We just have to investigate cylinders $k$ such that $\mu_\infty(k)=0$. Consider a enumeration $k_{1},k_{2},...$ of all cylinders such that $\mu_\infty(k)=0$. We begin with a fixed $k_{i}$ with this property and denote this by $k$.

For fixed $c$, the value $\int k d\mu_{csf}$ depends in a continuous way on $s$. In particular, as $\mu_{csf}$ are Gibbs states and therefore positive in open sets, then $\int k d\mu_{c\,s\,f} > 0$ for each $s$. It follows that
\[A_{c} = \inf_{s\in[1/2,1]}\int k d\mu_{c\,s\,f} > 0.\]
\newline
As for each fixed $c$, (with the notation of the proof above)
\[\frac{\beta_{2}(s,0)}{\beta(s,0)}\ \ \ \ \text{and} \ \ \ \ \frac{\alpha_{2}(s,0)}{\alpha(s,0)}\]
are analytic on  $s=1$, and, as for each fixed $c$
\[\frac{1-e^{P(csf)-P(cf)}}{e^{P(csf)-P(cf)}}\stackrel{s\rightarrow 1}{\rightarrow} 0,\]
then, we can find for each $c$, a value $s_{c}^{i}$ (remark that $i$ is the index of the cylinder $k=k_{i}$ fixed) such that $c(1-s_{c}^{i})\rightarrow 0$, and if $s_{c}^{i}<s<1$:
\begin{align*}
& a) -1/2 < \frac{1-e^{P(csf)-P(cf)}}{e^{P(csf)-P(cf)}}\frac{\beta_{2}(s,0)}{\beta(s,0)} < 1/2;\\
& \\
& b) - \frac{\int k d\mu_{csf}}{2} \leq -A_{c}/2 < \frac{1-e^{P(csf)-P(cf)}}{e^{P(csf)-P(cf)}}\frac{\alpha_{2}(s,0)}{\alpha(s,0)} < A_{c}/2 \leq \frac{\int k d\mu_{csf}}{2}.
\end{align*}

It follows from $(\ref{3})$ above that for each $c$ and $s_{c}^{i}<s<1$:
\[\frac{-\frac{\int k d\mu_{csf}}{2} + \int k d\mu_{csf}}{3/2} \leq \mu_{c,s}(k) \leq \frac{\frac{\int k d\mu_{csf}}{2} + \int k d\mu_{csf}}{1/2}.\]
This means
\[\frac{\int k d\mu_{csf}}{3} \leq \mu_{c,s}(k) \leq 3\int k d\mu_{csf}.\]
The conclusion is
\begin{align*}
\lim_{(c\to\infty,\, s_{c}^{i}<s<1)}\frac{1}{c}\log(\mu_{c,s}(k)) &= \lim_{(c\rightarrow\infty,\, s_{c}^{i}<s<1)}\frac{1}{sc}\log(\mu_{c,s}(k))\\
&= \lim_{(c\rightarrow\infty, \, s_{c}^{i}<s<1)}\frac{1}{sc}\log(\mu_{csf}(k))\\
&= -\inf_{x\in k}I_{BLT}(x).
\end{align*}
We remember that in this argument $k$ is fixed. For each $k_{i}$ we have a association $c\to s_{c}^{i}$ described above. Consider now the association $c\to s_{c}$, where for each integer $n>0$:
\[c \in [n,n+1) \Rightarrow s_{c} = sup_{i\in\{1,...,n\}}s_{c}^{i} < 1.\]
Then for each cylinder $k_{i}$ we have that $c > i \Rightarrow s_{c}^{i}<s_{c}$, so has above
\[ \lim_{c\rightarrow\infty,\, s_{c}<s<1}\frac{1}{c}\log(\mu_{c,s}(k)) = \lim_{c\rightarrow\infty,\, s_{c}^{j}<s<1}\frac{1}{c}\log(\mu_{c,s}(k))
=  -\inf_{x\in k}I_{BLT}(x).\]

\end{proof}

%%%%%%%%%%%%%%%%%%%%%%%%%%
%%%%%%%%%%%%%%%%%%%%%%%%%%%
%%%%%%%%%%%%%%%%%%%%%%%%%%
%%%%%%%%%%%%%%%%%%%%%%%%%%%%%%%%
\section{ About the measures $\pi_{c,N}$ and $\eta_{c,N}$ }
%%%%%%%%%%%%%%%%%%%%%%%%%%%%%%%%%%%
%%%%%%%%
%%%%%%%%
%%%%%%%%
%%%%%%%%%
%%%%%%%%%%

There are other ways to approximate $\mu_{cf}$ and $\mu_{\infty}$: we can use, for example,  the measures $\pi_{c,N}$ and $\eta_{c,N}$, $c \in \mathbb{R}$ and $N\in \mathbb{N}$, given by
\[\pi_{c,N}(k) =
 \frac{\sum_{n=1}^{N}\sum_{x\in Fix_{n}}e^{c\,f^{n}(x)-n\,P(c\,f)}\frac{k^{n}(x)}{n}}{\sum_{n=1}^{N}\sum_{x\in Fix_{n}}e^{c\,f^{n}(x)-n\,P(c\,f)}}\]
and
\[\eta_{c,N}(k) =  \frac{\sum_{n=1}^{N}\sum_{x\in Fix_{n}}e^{c\,f^{n}(x)}\frac{k^{n}(x)}{n}}{\sum_{n=1}^{N}\sum_{x\in Fix_{n}}e^{c\,f^{n}(x)}}.\]

We will prove:

\begin{lemma}\label{lemma2}
For fixed $c$ and continuous $g:X \to \mathbb{R}$:
\[\lim_{N \to \infty} \pi_{c,N}(g) = \lim_{N\to \infty}\eta_{c,N}(g) = \mu_{c\,f}(g),\]
where $\mu_{c\,f}$ is the Gibbs state for $cf$.
\end{lemma}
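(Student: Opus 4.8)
The plan is to exploit the convergence results from \cite{PP} for zeta functions, exactly as in the proof of Lemma \ref{sto1}, but now passing the cutoff $N\to\infty$ instead of $s\to 1$. First I would fix the cylinder $k$ and recall that, since $P(cf - P(cf)) = 0$, Theorem 5.5(ii) of \cite{PP} (quoted in the excerpt) gives absolute convergence of
\[
\sum_{n=1}^{\infty}\frac{1}{n}\left|\sum_{x\in\mathrm{Fix}_n}e^{cf^n(x)} - e^{nP(cf)}\right| < \infty,
\]
and the analogous statement with $e^{cf^n + zk^n}$ in place of $e^{cf^n}$ for small $|z|$. Writing $a_n := \sum_{x\in\mathrm{Fix}_n} e^{cf^n(x) - nP(cf)}$, the point is that $a_n = 1 + r_n$ with $\sum_n |r_n|/n < \infty$, so $\sum_{n=1}^N a_n/n$ behaves like $\log N$ as $N\to\infty$ and in particular diverges, while $\sum_{n=1}^N a_n \tfrac{k^n}{n}$ also diverges at the same $\log N$ rate with a controlled correction. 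Dividing, the $\log N$ leading terms cancel and the limit is governed by the regular (bounded, summable) parts.

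Concretely, I would introduce $b_n := \sum_{x\in\mathrm{Fix}_n} e^{cf^n(x) - nP(cf)}\,k^n(x)$ and use the identity, valid from the $z$-derivative at $z=0$ of the $\alpha$-type function of the last section (or directly from Proposition 6.5 in the $s=1$ case of the estimates already established), that $\sum_{n=1}^\infty \tfrac1n(b_n - e^{nP(cf)}\,n\!\int k\,d\mu_{cf})$ — more precisely $\sum_{n=1}^\infty \tfrac1n(b_n - \int k\,d\mu_{cf})$ after the $-nP(cf)$ normalization — converges absolutely, because $\frac{\partial}{\partial z}P(cf+zk)|_{z=0} = \int k\,d\mu_{cf}$ and the spectral-radius identity $\rho(\mathcal L_{cf}) = e^{P(cf)}$ make the $e^{nP(cf+zk-P(cf))}$ term contribute exactly $\int k\,d\mu_{cf}$ per unit $\tfrac1n$. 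Then
\[
\pi_{c,N}(k) = \frac{\sum_{n=1}^N b_n/n}{\sum_{n=1}^N a_n/n} = \frac{\left(\int k\,d\mu_{cf}\right)\sum_{n=1}^N \tfrac1n + O(1)}{\sum_{n=1}^N \tfrac1n + O(1)} \xrightarrow[N\to\infty]{} \int k\,d\mu_{cf},
\]
since $\sum_{n=1}^N \tfrac1n \to \infty$. For $\eta_{c,N}$ the argument is the same except one does not divide by $e^{nP(cf)}$; instead both numerator and denominator are dominated by their largest term and, after factoring out $e^{NP(cf)}$ (or rather using that $\sum_{n\le N}\sum_{\mathrm{Fix}_n} e^{cf^n}$ grows like $e^{NP(cf)}$ up to a bounded factor, by the transfer-operator asymptotics $\sum_{\mathrm{Fix}_n} e^{cf^n} \sim e^{nP(cf)}$), the ratio again converges to $\int k\,d\mu_{cf}$; in fact $\eta_{c,N}(k)$ and $\pi_{c,N}(k)$ have the same limit because the extra weights $e^{nP(cf)}$ are the same in numerator and denominator up to the $k^n/n$ factor, which is bounded.

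Finally, to pass from cylinders (equivalently, finite linear combinations of indicator functions of cylinders) to a general continuous $g$, I would repeat verbatim the sandwiching argument used in the proof of Theorem \ref{teo}: given $\epsilon>0$ pick cylinder-combinations $k_1 \le g \le k_2 \le k_1 + \epsilon$, note $\pi_{c,N}$ and $\eta_{c,N}$ are probability measures so the inequalities are preserved, take $\liminf$ and $\limsup$ in $N$, use the cylinder case just proved, and let $\epsilon\to 0$; this shows $\lim_N \pi_{c,N}(g)$ exists and equals $\mu_{cf}(g)$, and likewise for $\eta_{c,N}$. The main obstacle is the bookkeeping in the first step: one must be careful that the ``regular part'' of $\sum b_n/n$ really is summable and that the divergent $\sum 1/n$ pieces in numerator and denominator carry exactly the matching coefficients $\int k\,d\mu_{cf}$ and $1$ respectively — this is where the identity $\partial_z P(cf+zk)|_{z=0} = \int k\,d\mu_{cf}$ together with the $P(\cdot)=0$ convergence lemma from \cite{PP} is essential, and it is the only place where anything beyond elementary estimates enters.
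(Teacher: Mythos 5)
Your overall strategy --- the Parry--Pollicott pressure-zero convergence theorem, the $\alpha$-type function together with $\left.\frac{\partial}{\partial z}P(cf+zk)\right|_{z=0}=\int k\,d\mu_{cf}$, a summation argument, and the final sandwich argument for continuous $g$ --- is essentially the paper's, but the central bookkeeping step is wrong as written, in two compensating ways. First, you misquote the definition of $\pi_{c,N}$: its denominator is $\sum_{n=1}^{N}\sum_{x\in Fix_{n}}e^{cf^{n}(x)-nP(cf)}$, with no factor $1/n$, and since $\sum_{x\in Fix_{n}}e^{cf^{n}(x)-nP(cf)}\to 1$ this denominator grows like $N$, not like $\log N$. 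Second, your ``regular part'' claim is false: with $b_{n}:=\sum_{x\in Fix_{n}}e^{cf^{n}(x)-nP(cf)}k^{n}(x)$, the convergent series produced by $\alpha_{2}(1,0)/\alpha(1,0)$ is $\sum_{n}\bigl(\frac{b_{n}}{n}-\int k\,d\mu_{cf}\bigr)$ (the term $-nP(cf)$ normalization does not remove the factor $n$ multiplying $\int k\,d\mu_{cf}$), so $b_{n}\sim n\int k\,d\mu_{cf}$; the series $\sum_{n}\frac{1}{n}\bigl(b_{n}-\int k\,d\mu_{cf}\bigr)$ that you assert converges absolutely has terms tending to $\int k\,d\mu_{cf}>0$ and diverges. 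Hence both the numerator and the true denominator of $\pi_{c,N}(k)$ grow linearly in $N$, there is no $\log N$ cancellation, and the displayed computation is not a proof; it only outputs the right number because the two errors offset each other. (Note also that $\sum_{n}|r_{n}|/n<\infty$ alone does not give $a_{n}\to 1$; the paper gets the termwise limits from the convergence of the series defining $\alpha_{2}(1,0)/\alpha(1,0)$.)

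The repair is short and is what the paper does: analyticity of $\alpha_{2}(s,0)/\alpha(s,0)$ at $s=1$ forces the general term of that series to zero, i.e. $\sum_{x\in Fix_{n}}e^{cf^{n}(x)-nP(cf)}\frac{k^{n}(x)}{n}\to\int k\,d\mu_{cf}$ and, taking $k\equiv 1$, $\sum_{x\in Fix_{n}}e^{cf^{n}(x)-nP(cf)}\to 1$; then a Stolz--Ces\`aro step (the paper's ``well known result'', using that for a cylinder $k$ the terms are eventually bounded below, so the partial sums of the denominator diverge) gives $\pi_{c,N}(k)\to\int k\,d\mu_{cf}$. The same per-$n$ ratio, combined with the same Ces\`aro lemma applied to the sums weighted by $e^{nP(cf)}$, handles $\eta_{c,N}$; your remark that the weights ``are the same in numerator and denominator up to a bounded factor'' is not by itself a proof that the limits coincide. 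The final approximation step from cylinder functions to continuous $g$ is fine and matches the paper.
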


After that we will analyze what happens when $c \to \infty$, simultaneously, with $N \to \infty$. We prove that:

\begin{theorem}\label{teo2}

When $c,N\rightarrow\infty$ any accumulation point of $\pi_{c,N}$ (or $\eta_{c,N}$) is in $M_{max}(f)$.
Moreover, if $g:X\rightarrow \mathbb{R}$ is a continuous function, $c_{j}\rightarrow\infty$ and $N_{j}\rightarrow \infty$, are such that there exist
\[\lim_{j\rightarrow\infty}\pi_{c_{j},N_{j}}(g), \]
then, this limit is $\int g d\mu$ for some accumulation point $\mu$ of  $\pi_{c,N}$ in the weak* topology. (the same happens for $\eta_{c,N}$).
\newline
\newline
In particular, if $\mu_\infty$ is unique in $M_{max}(f)$, then for any continuous  $g:X\rightarrow \mathbb{R}$
\[\lim_{c,N \to \infty} \pi_{c,N}(g) = \lim_{c,N\to \infty}\eta_{c,N}(g)=\int g d\mu_\infty.\]
\end{theorem}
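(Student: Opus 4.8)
\textbf{Proof plan for Theorem \ref{teo2}.}

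The plan is to mirror, with the appropriate modifications, the two-step argument used for Theorem \ref{teo}: first establish the analogue of the auxiliary Lemma \ref{aux}, namely that
\[\liminf_{c,N\to\infty}\pi_{c,N}(f)\geq\beta(f)\quad\text{and}\quad\liminf_{c,N\to\infty}\eta_{c,N}(f)\geq\beta(f),\]
and then run the compactness-plus-diagonal-sequence argument verbatim. The key point for the first step is that the weighting $e^{cf^n(x)}$ concentrates, as $c\to\infty$, on those periodic points whose ergodic average $f^n(x)/n$ is as close as possible to $\beta(f)$; this is exactly the mechanism already exploited in Lemma \ref{aux}. Concretely, with $\epsilon>0$ fixed and $A_n,B_n$ defined as in the proof of Lemma \ref{aux} (splitting $\mathrm{Fix}_n$ according to whether $f^n(x)/n<\beta(f)-\epsilon$), I would bound the $A_n$-contribution: for $\eta_{c,N}$ one has $\sum_{n=1}^N\sum_{x\in A_n}e^{cf^n(x)}\le\sum_{n=1}^N\sum_{x\in A_n}e^{cn(\beta(f)-\epsilon)}\le\sum_{n=1}^N d^n e^{cn(\beta(f)-\epsilon)}$, while (using Lemma \ref{lema} to pick a periodic point $x_0$ with $I(x_0)<\epsilon/2$, whose period $n_{x_0}$ is fixed once $\epsilon$ is) the $B_n$-contribution is at least the single term $e^{cf^{n_{x_0}}(x_0)}\ge e^{cn_{x_0}(\beta(f)-\epsilon/2)}$. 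Taking the quotient and letting $c\to\infty$ (with $N\ge n_{x_0}$, which is automatic since $N\to\infty$) kills the $A_n$-part relative to the $B_n$-part; the same computation with the extra factor $f^n/n$ (bounded by $\beta(f)-\epsilon$ on $A_n$, and $\ge|f|_-$ throughout) gives $\liminf\eta_{c,N}(f)\ge\beta(f)-\epsilon$, and $\epsilon\to 0$ finishes it. For $\pi_{c,N}$ the only change is the extra factor $e^{-nP(cf)}=e^{-nc\beta(f)-n\epsilon_c}$, which is the same normalization already handled in Lemma \ref{aux}; since $\epsilon_c\to h_f$ is bounded, it contributes only a harmless $e^{-n\epsilon_c}$ that is dominated by the exponential gap $e^{-cn\epsilon}$ coming from $A_n$.

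Once $\liminf\pi_{c,N}(f)\ge\beta(f)$ (resp. for $\eta$) is in hand, the rest is identical to the proof of Theorem \ref{teo}: any weak$^*$ accumulation point $\mu$ of $\pi_{c,N}$ is $\sigma$-invariant and satisfies $\mu(f)\ge\beta(f)$, hence $\mu(f)=\beta(f)$, i.e. $\mu\in M_{\max}(f)$. For the second assertion, given a continuous $g$ and sequences $c_j,N_j\to\infty$ along which $\pi_{c_j,N_j}(g)$ converges, apply the diagonal Cantor argument over the countable family of cylinder indicators to extract a subsequence along which $\pi_{c_{j_i},N_{j_i}}(k)$ converges for every cylinder $k$, define $\mu$ on cylinders by this limit, and then the sandwiching of an arbitrary continuous $h$ between finite linear combinations $k_1\le h\le k_2\le k_1+\epsilon$ of cylinder indicators shows $\liminf$ and $\limsup$ of $\pi_{c_{j_i},N_{j_i}}(h)$ coincide, so $\mu$ extends to a genuine accumulation point of the family and $\lim_j\pi_{c_j,N_j}(g)=\mu(g)$. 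The final "in particular" clause is then immediate: if $M_{\max}(f)=\{\mu_\infty\}$, every accumulation point equals $\mu_\infty$, so $\pi_{c,N}(g)\to\mu_\infty(g)$ and likewise $\eta_{c,N}(g)\to\mu_\infty(g)$.

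The main obstacle, and the only place where one must be slightly careful, is that the sums defining $\pi_{c,N}$ and $\eta_{c,N}$ are \emph{finite} (they run $n=1,\dots,N$) rather than convergent infinite series, so the estimates must be uniform in $N$ and one must make sure the chosen periodic point $x_0$ from Lemma \ref{lema} actually lies in the range $n\le N$; this is why the limit is taken with $N\to\infty$ and why no convergence-of-zeta-series input (unlike in Theorem \ref{ldt} or Lemma \ref{sto1}) is needed here. A secondary point is that for $\eta_{c,N}$ there is no $P(cf)$ normalization, so the bare sums $\sum_{n\le N}\sum_{x\in\mathrm{Fix}_n}e^{cf^n(x)}$ grow like $e^{Nc\beta(f)}$ (roughly); but since we only ever look at the \emph{ratio} of two such sums with the same $c,N$, the common growth cancels and the concentration argument on $B_n$ versus $A_n$ goes through unchanged. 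No new ideas beyond those in the proofs of Lemma \ref{aux} and Theorem \ref{teo} are required — the statement is, as the authors note, "not a surprise."
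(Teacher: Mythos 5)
Your handling of $\pi_{c,N}$ and the entire second half of the argument (diagonal extraction over cylinders, then sandwiching a continuous $h$ between $k_1\le h\le k_2\le k_1+\epsilon$) coincide with the paper's proof, and they are correct: after the normalization $e^{-nP(cf)}=e^{-nc\beta(f)-n\epsilon_c}$ with $\epsilon_c\ge h_f\ge 0$, the $A_n$-part is bounded by a geometric series uniformly in $N$, so the single periodic term $e^{-cI(x_0)-n_{x_0}\epsilon_c}\ge e^{-c\epsilon/2-n_{x_0}\epsilon_c}$ suffices. The genuine gap is in your estimate for $\eta_{c,N}$. There the sums carry no $e^{-nP(cf)}$ factor, and since $f>0$ one has $\beta(f)-\epsilon>0$ for small $\epsilon$, so your upper bound for the $A_n$-part, $\sum_{n=1}^N d^n e^{cn(\beta(f)-\epsilon)}$, grows like $e^{cN(\beta(f)-\epsilon)+N\log d}$ and is unbounded in $N$ for every fixed $c$; against it you put only the single term $e^{cf^{n_{x_0}}(x_0)}\ge e^{cn_{x_0}(\beta(f)-\epsilon/2)}$, which does not depend on $N$ at all. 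Since $c$ and $N$ tend to infinity independently here (there is no constraint such as $N/c\to 0$ in this theorem), one can fix $c$ arbitrarily large and let $N\to\infty$: your quotient then diverges to $+\infty$ rather than vanishing, so the claim that letting $c\to\infty$ ``kills the $A_n$-part'' fails, and your remark that ``the common growth cancels'' is not realized by the bounds you actually wrote, because your lower bound on the $B_n$-part has no growth in $N$ to do the cancelling.

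The paper repairs exactly this point: it bounds the $B_n$-part from below not by one term but by the geometric sum over all repetitions of the chosen periodic orbit inside $\mathrm{Fix}_{jn_{x_0}}$,
\[
\sum_{j=1}^{[N/n_{x_0}]} e^{c\,j\,f^{n_{x_0}}(x_0)}\;\ge\; e^{cn_{x_0}(\beta(f)-\epsilon/2)}\,\frac{e^{c[N/n_{x_0}]n_{x_0}(\beta(f)-\epsilon/2)}-1}{e^{cn_{x_0}(\beta(f)-\epsilon/2)}-1},
\]
which grows in $N$ at exponential rate roughly $cN(\beta(f)-\epsilon/2)$. The ratio of the $A_n$ upper bound to this lower bound is then of order $\exp\left(-cN\epsilon/2+N\log d+O(c)\right)$, which does tend to $0$ because $c\to\infty$ eventually forces $c\epsilon/2>\log d$. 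With this single replacement your argument for $\eta_{c,N}$ goes through; the rest of your proposal is correct and follows the same route as the paper.
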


We also study this result for $\pi_{c,N}$ in order to get a L.D.P..

\begin{theorem}\label{ldt2}
Suppose $f$ is Lipschitz. Then for all cylinder $k\subset X$
\[\lim_{\frac{N}{c} \to 0} \frac{1}{c}\log(\pi_{c,N}(k))=-\inf_{x\in k, \,x \in \text{Per}}I(x) = - \inf_{x \in k}\widetilde{I}(x).\]
\end{theorem}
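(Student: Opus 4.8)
The plan is to mimic the proof of Theorem \ref{ldt}, replacing the full zeta-series over all $n$ by a truncated sum over $n \le N$, and to check that the condition $N/c \to 0$ is exactly what allows the two bounds to still match. As in Theorem \ref{ldt}, it suffices to show that for every fixed cylinder $k$
\[
\lim_{N/c \to 0} \frac{1}{c}\log \sum_{n=1}^{N}\sum_{y\in \mathrm{Fix}_n} e^{cf^n(y)-nP(cf)}\frac{k^n(y)}{n} = -\inf_{x\in k,\,x\in\mathrm{PER}} I(x),
\]
since taking $k = X$ gives that the denominator $\sum_{n=1}^{N}\sum_{y\in\mathrm{Fix}_n}e^{cf^n(y)-nP(cf)}$ has $\frac{1}{c}\log(\cdot) \to -\inf_{x\in\mathrm{PER}} I(x) = 0$ by Lemma \ref{lema}, and then the quotient defining $\pi_{c,N}(k)$ is handled by subtraction. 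Note that here $s=1$, so the term $n_x c(s-1)\beta(f)$ that appeared in Theorem \ref{ldt} is simply absent, which actually simplifies matters; the role previously played by $c(1-s)$ is now played by the truncation.

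For the lower bound, I would fix a periodic point $x \in k$ with minimal period $n_x$. Provided $c$ is large enough that $N = N(c) \ge n_x$ (which holds eventually since $N \to \infty$ along any sequence with $N/c \to 0$ — one needs $c \to \infty$, which follows from $N/c\to 0$ together with $N\to\infty$, or one simply restricts to that regime as in the earlier proofs), the single term $n = n_x$ is present in the truncated sum, giving
\[
\sum_{n=1}^{N}\sum_{y\in\mathrm{Fix}_n} e^{cf^n(y)-nP(cf)}\frac{k^n(y)}{n} \ge e^{cf^{n_x}(x)-n_x P(cf)} = e^{-cI(x) - n_x\epsilon_c},
\]
using $P(cf) = c\beta(f)+\epsilon_c$ and the definition of $I(x)$. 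Dividing by $c$, taking $\liminf$, and using $\epsilon_c \to h_f$ (so $n_x\epsilon_c/c \to 0$), we get $\liminf \ge -I(x)$; taking the infimum over periodic $x \in k$ finishes the lower bound.

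For the upper bound, write $I := \inf_{x\in k,\,x\in\mathrm{PER}} I(x)$, fix $\delta > 0$, and estimate, exactly as in Theorem \ref{ldt} but with $s=1$,
\[
\sum_{n=1}^{N}\sum_{y\in\mathrm{Fix}_n}e^{cf^n(y)-nP(cf)}\frac{k^n(y)}{n} \le e^{-cI(1-\delta)} \sum_{n=1}^{N}\sum_{y\in\mathrm{Fix}_n} e^{-nc\delta(\beta(f)-f^n(y)/n) - nh_f}\frac{k^n(y)}{n},
\]
where I have split $\beta(f)-f^n(y)/n = (1-\delta)(\beta(f)-f^n(y)/n) + \delta(\beta(f)-f^n(y)/n)$ and bounded the $(1-\delta)$-part below by $(1-\delta)I/n$ using that $n(\beta(f)-f^n(y)/n) \ge I$ whenever $k^n(y)\neq 0$ and $y$ is periodic of period dividing $n$ lying in $k$ — here one must be slightly careful: the inequality $I(y')\ge I$ holds for the minimal-period point $y'$ on the orbit of $y$, and $n(\beta(f)-f^n(y)/n)$ is a positive multiple of $I(y')$. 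The remaining sum $\sum_{n=1}^{N}\sum_{y\in\mathrm{Fix}_n} e^{c_0\delta f^n(y) - nP(c_0\delta f) - n\psi'}$-type bound of Theorem \ref{ldt} cannot be reused verbatim because $c$ is not bounded below by a fixed $c_0$ in the same way; instead I would bound the residual sum crudely: each term is at most $e^{n(c\delta|\beta(f)| + |f|_\infty c\delta + |h_f|)}$ in absolute value, there are at most $d^n$ points in $\mathrm{Fix}_n$, and there are only $N$ values of $n$, so the whole truncated residual sum is at most $N \cdot e^{CN(c\delta + 1)}$ for a constant $C$ depending only on $f$ and $d$. Therefore
\[
\frac{1}{c}\log \sum_{n=1}^{N}\sum_{y\in\mathrm{Fix}_n}e^{cf^n(y)-nP(cf)}\frac{k^n(y)}{n} \le -I(1-\delta) + \frac{\log N}{c} + \frac{CN(c\delta+1)}{c},
\]
and the key point is that $\frac{\log N}{c}\to 0$ and $\frac{CN(c\delta+1)}{c} = C\delta N + \frac{CN}{c} \to 0$ precisely because $N/c \to 0$ forces both $N/c\to 0$ and (since $N\ge 1$) $C\delta N$ — wait, this needs $N$ bounded, which is false. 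This is the main obstacle: the naive bound $d^n$ on $|\mathrm{Fix}_n|$ loses too much. The fix is to keep the structure of Theorem \ref{ldt}'s argument, namely to absorb the $\delta$-part of the exponent into a genuinely convergent zeta-series: one writes the residual as $\sum_{n=1}^{N}\sum_{y\in\mathrm{Fix}_n}e^{-nc\delta(\beta(f)-f^n(y)/n) - nh_f}\frac{k^n(y)}{n} \le \sum_{n=1}^{\infty}\sum_{y\in\mathrm{Fix}_n} e^{-nc_1\delta(\beta(f)-f^n(y)/n)-nh_f}$ for any fixed $c_1 > 0$ once $c \ge c_1$ (monotonicity in $c$, since $\beta(f)-f^n(y)/n \ge 0$), and this last series equals $\sum_n\sum_{y\in\mathrm{Fix}_n}e^{c_1\delta f^n(y) - nP(c_1\delta f) + n(P(c_1\delta f) - c_1\delta\beta(f)) - nh_f} = \sum_n\sum_{y\in\mathrm{Fix}_n}e^{c_1\delta f^n(y) - nP(c_1\delta f) + n(\epsilon_{c_1\delta} - h_f)}$, which converges to a finite constant $T = T(\delta,c_1)$ provided $c_1$ is chosen large enough that $\epsilon_{c_1\delta} - h_f$ is small — exactly as in the cited Theorem 5.4/5.5 of \cite{PP}, since $\epsilon_{c_1\delta}\to h_f$. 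With that, $\frac{1}{c}\log(\cdot) \le \frac{1}{c}\log(e^{-cI(1-\delta)}T) \to -I(1-\delta)$, and letting $\delta\to 0$ gives the upper bound. The identity $-\inf_{x\in k,x\in\mathrm{PER}}I(x) = -\inf_{x\in k}\widetilde{I}(x)$ is immediate from the Lemma stating $\inf_{k\cap X}\widetilde I = \inf_{k\cap\mathrm{PER}} I$. I expect the only real subtlety to be this choice of an auxiliary fixed parameter $c_1$ (independent of the running $c$) to dominate the residual truncated sum by a convergent full zeta-series; the role of $N/c\to 0$ is then confined to killing the harmless prefactors $\frac{\log N}{c}$ that arise from the $\frac1n$ weights and the fact that the lower-bound single-orbit term is present once $N \ge n_x$.
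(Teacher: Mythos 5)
Your lower bound is fine and is the same as the paper's. The gap is in the mechanism you finally settle on for the upper bound: dominating the truncated residual by the infinite series $\sum_{n\geq1}\sum_{y\in\mathrm{Fix}_n}e^{-nc_1\delta(\beta(f)-f^n(y)/n)-nh_f}$ with a fixed $c_1$. That series diverges. Rewriting the exponent as $c_1\delta f^n(y)-nP(c_1\delta f)+n(\epsilon_{c_1\delta}-h_f)$, the comparison potential has pressure $\epsilon_{c_1\delta}-h_f\geq 0$ (since $\epsilon_c$ decreases \emph{to} $h_f$ it never lies below it), so $\sum_{y\in\mathrm{Fix}_n}$ of each block is bounded below by a positive constant and the sum over $n$ blows up; the criterion of Theorem 5.4 in \cite{PP} requires strictly negative pressure, and choosing $c_1$ "large enough that $\epsilon_{c_1\delta}-h_f$ is small" cannot achieve that. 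In the proof of Theorem \ref{ldt} the convergence of the comparison zeta series was purchased precisely by the extra factor $e^{-c(1-s)n|f|_{-}}\leq e^{-2\psi n|f|_{-}}$, i.e.\ by the hypothesis $\liminf c(1-s)=L>0$; at $s=1$ this factor is absent and the untruncated series genuinely diverges --- which is exactly why one truncates at $N$ in the first place. So this part of your argument cannot be repaired by tuning $c_1$.

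The correct route --- and the paper's --- is the "crude" one you discarded, used with the right sign information. For $y\in\mathrm{Fix}_n$ one has $\beta(f)-f^n(y)/n\geq 0$, and whenever $k^n(y)\neq0$ one even has $n\beta(f)-f^n(y)\geq I:=\inf_{k\cap\mathrm{PER}}I$, because some shift $z=\sigma^i(y)$ is a periodic point lying in $k$ and $n\beta(f)-f^n(y)=\frac{n}{n_y}I(z)\geq I(z)\geq I$. Hence no $\delta$-splitting is needed at all: the truncated sum is at most $e^{-cI}\sum_{n=1}^{N}d^{\,n}e^{-n\epsilon_c}\leq e^{-cI}\,N d^{\,N}$, so $\frac1c\log(\cdot)\leq -I+O\bigl(\tfrac{N\log d+\log N}{c}\bigr)\to -I$, exactly because $N/c\to0$. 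Your "main obstacle" (the non-vanishing $C\delta N$ term) was an artifact of bounding the residual exponent by its absolute value; since that exponent is $\leq 0$, each residual term is $\leq1$ and the $d^n$ count is harmless in the regime $N/c\to0$. With that replacement your argument becomes the paper's proof; the final identification $\inf_{k\cap\mathrm{PER}}I=\inf_{k}\widetilde{I}$ is, as you say, just the lemma of Section 5.
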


We point out  that we take $c,N \to \infty$.
\newline
\newline
\textbf{We start with the proof of Lemma \ref{lemma2}:}

\begin{proof}
By Lemma \ref{alpha} the function

\[ \frac{\alpha_{2}(s,0)}{\alpha(s,0)} =\sum_{n=1}^{\infty}\left[ \left(\sum_{Fix_{n}}e^{csf^{n}-nP(cf)}\frac{k^{n}}{n}\right) - e^{nP(csf)-nP(cf)}\int k d\mu_{csf} \right]  \]
is analytic on $s=1$.

Then:
\[-\infty <\frac{\alpha_{2}(1,0)}{\alpha(1,0)} = \sum_{n=1}^{\infty}\left[\left(\sum_{Fix_{n}}e^{cf^{n}-nP(cf)}\frac{k^{n}}{n}\right)-\int kd\mu_{cf}\right] < \infty.\]
So
\begin{equation}
\left(\sum_{Fix_{n}}e^{cf^{n}-nP(cf)}\frac{k^{n}}{n}\right) \stackrel{n\to\infty}{\to} \int kd\mu_{cf}. \label{5}
\end{equation}
Then,
\[\frac{\sum_{Fix_{n}}e^{cf^{n}-nP(cf)}\frac{k^{n}}{n}}{\sum_{Fix_{n}}e^{cf^{n}-nP(cf)}}\stackrel{n\to\infty}{\to} \int kd\mu_{cf},\]
and
\[\frac{\sum_{Fix_{n}}e^{cf^{n}}\frac{k^{n}}{n}}{\sum_{Fix_{n}}e^{cf^{n}}}\stackrel{n\to\infty}{\to} \int kd\mu_{cf}.\]
Now we use the following well known result:

\bigskip

\begin{center}
``Let $a_{n}$ and $b_{n}$ sequences of real numbers $\geq0$. Suppose that $\frac{a_{n}}{b_{n}} \to L$, and there is $\epsilon>0$ such that $a_{n}>\epsilon$ and $b_{n}>\epsilon$, $n>>0$ .
\newline
Then $\frac{\sum_{n=1}^{N}a_{n}}{\sum_{n=1}^{N}b_{n}}$ goes to $L$ when $N\rightarrow\infty$."
\end{center}

\bigskip

We have then (using (\ref{5}) in order to get the $> \epsilon$ property) that for $k>0$ (or, cylinder sets):
\[\lim_{N\to\infty}\pi_{c,N}(k) = \lim_{N\to\infty}\frac{\sum_{n=1}^{N}\sum_{Fix_{n}}e^{cf^{n}-nP(cf)}\frac{k^{n}}{n}}{\sum_{n=1}^{N}\sum_{Fix_{n}}e^{cf^{n}-nP(cf)}}=\int kd\mu_{cf},\]
and
\[\lim_{N\to\infty}\eta_{c,N}(k) = \lim_{N\to\infty}\frac{\sum_{n=1}^{N}\sum_{Fix_{n}}e^{cf^{n}}\frac{k^{n}}{n}}{\sum_{n=1}^{N}\sum_{Fix_{n}}e^{cf^{n}}}=\int kd\mu_{cf}.\]
When $g:X \to \mathbb{R}$ is continuous, we use aproximation arguments.
\end{proof}

\bigskip

\textbf{Now we prove the Theorem \ref{teo2}:}

\begin{proof}
We start with $\pi_{c,N}$. Using the same arguments that we used in Theorem \ref{teo} we only need prove that
\[\liminf_{c,N\to\infty}\pi_{c,N}(f) \geq \beta(f).\]
We will use the same notations and ideas that Lemma \ref{aux}, so for fixed $\epsilon>0$ we only need prove that:
\[\frac{\sum_{n=1}^{N}\sum_{x\in A_{n}}e^{cf^{n}-nP(cf)}}{\sum_{n=1}^{N}\sum_{x\in B_{n}}e^{cf^{n}-nP(cf)}} \stackrel{c,N \to \infty}{\to} 0.\]
Now:
\begin{align*}
\sum_{n=1}^{N}\sum_{x\in A_{n}}e^{cf^{n}-nP(cf)} &\leq \sum_{n=1}^{N}\sum_{x\in A_{n}}e^{cn(\beta(f) - \epsilon)-nc\beta(f) -n\epsilon_{c}}\\
&=  \sum_{n=1}^{N}\sum_{x\in A_{n}}e^{- nc\epsilon -n\epsilon_{c}}\\
&\leq  \sum_{n=1}^{N}e^{- nc\epsilon + n\log(d)}\\
&\leq \frac{e^{- c\epsilon + \log(d)}}{1-e^{- c\epsilon + \log(d)}}.
\end{align*}

 On the other hand, by lemma $\ref{lema}$, there exists a periodic point $x$ such that:
$$I(x)=n_{x}\left(\beta(f) - \frac{f^{n_{x}}(x)}{n_{x}}\right)<\epsilon/2.$$
Therefore,
\begin{align*}
\sum_{n=1}^{N}\sum_{x\in B_{n}}e^{cf^{n}-nP(cf)} &\geq e^{cf^{n_{x}}(x)-n_{x}P(cf)}\\
&= e^{-cI(x) -n_{x}\epsilon_{c}} \\
&\geq e^{-c\epsilon/2 -n_{x}\epsilon_{c}}.
\end{align*}

It follows that
\[
\frac{\sum_{n=1}^{N}\sum_{x\in A_{n}}e^{cf^{n}-nP(cf)}}{\sum_{n=1}^{N}\sum_{x\in B_{n}}e^{cf^{n}-nP(cf)}}
\leq \frac{e^{- c\epsilon + \log(d)}}{e^{-c\epsilon/2 -n_{x}\epsilon_{c}}}\frac{1}{1-e^{- c\epsilon + \log(d)}} \to 0. \]
\bigskip
\bigskip

Now we consider $\eta_{c,N}$:
\newline
In the same way as above, using the same notations and ideas of Lemma \ref{aux}, we only need prove that for $\epsilon>0$ fixed:
\[\frac{\sum_{n=1}^{N}\sum_{x\in A_{n}}e^{cf^{n}}}{\sum_{n=1}^{N}\sum_{x\in B_{n}}e^{cf^{n}}} \stackrel{c,N \to \infty}{\to} 0.\]

We have
\begin{align*}
\sum_{n=1}^{N}\sum_{x\in A_{n}}e^{cf^{n}} &\leq \sum_{n=1}^{N}\sum_{x\in A_{n}}e^{cn(\beta(f) - \epsilon)}\\
&\leq  \sum_{n=1}^{N}e^{cn(\beta(f) - \epsilon)+n\log(d)}\\
&= e^{c(\beta(f) - \epsilon)+\log(d)}\frac{e^{cN(\beta(f) - \epsilon)+N\log(d)}-1}{e^{c(\beta(f) - \epsilon)+\log(d)}-1}.
\end{align*}

By the other side, there exists a periodic point $x$ such that:
$$ \frac{f^{n_{x}}(x)}{n_{x}} > \beta(f) -\epsilon/2.$$
Therefore,
\begin{align*}
\sum_{n=1}^{N}\sum_{x\in B_{n}}e^{cf^{n}} &\geq \sum_{j=1}^{[N/n_{x}]}e^{cjf^{n_{x}}(x)}\\
&\geq \sum_{j=1}^{[N/n_{x}]}e^{cjn_{x}(\beta(f)-\epsilon/2)}\\
&\geq e^{cn_{x}(\beta(f)-\epsilon/2)}\frac{e^{c[N/n_{x}]n_{x}(\beta(f)-\epsilon/2)}-1}{e^{cn_{x}(\beta(f)-\epsilon/2)}-1}.
\end{align*}
It follows that
\begin{align*}
\frac{\sum_{n=1}^{N}\sum_{x\in A_{n}}e^{cf^{n}}}{\sum_{n=1}^{N}\sum_{x\in B_{n}}e^{cf^{n}}}
&\leq \frac{e^{c(\beta(f) - \epsilon)+\log(d)}\frac{e^{cN(\beta(f) - \epsilon)+N\log(d)}-1}{e^{c(\beta(f) - \epsilon)+\log(d)}-1}}{e^{cn_{x}(\beta(f)-\epsilon/2)}\frac{e^{c[N/n_{x}]n_{x}(\beta(f)-\epsilon/2)}-1}{e^{cn_{x}(\beta(f)-\epsilon/2)}-1}}\\
&= \frac{e^{cN(\beta(f) - \epsilon)+N\log(d)}-1}{e^{c[N/n_{x}]n_{x}(\beta(f)-\epsilon/2)}-1}\frac{e^{c(\beta(f) - \epsilon)+\log(d)}}{e^{c(\beta(f) - \epsilon)+\log(d)}-1}\frac{e^{cn_{x}(\beta(f)-\epsilon/2)}-1}{e^{cn_{x}(\beta(f)-\epsilon/2)}}.
\end{align*}
Now, we have:
\[\lim_{c,N\to\infty}\frac{e^{cn_{x}(\beta(f)-\epsilon/2)}-1}{e^{cn_{x}(\beta(f)-\epsilon/2)}}=1 \ \ \ \ \text{and} \ \ \ \
\lim_{c,N\to\infty}\frac{e^{c(\beta(f) - \epsilon)+\log(d)}}{e^{c(\beta(f) - \epsilon)+\log(d)}-1} = 1.\]
By the other side,
\[\lim_{c,N\to\infty}\frac{e^{cN(\beta(f) - \epsilon)+N\log(d)}-1}{e^{cN(\beta(f) - \epsilon)}}=1 \ \ \ \ \text{and} \ \ \ \
\lim_{c,N\to\infty}\frac{e^{c[N/n_{x}]n_{x}(\beta(f)-\epsilon/2)}-1}{e^{cN(\beta(f)-\epsilon/2)}}=1.\]
Then
\[\lim_{c,N\to\infty}\frac{e^{cN(\beta(f) - \epsilon)+N\log(d)}-1}{e^{c[N/n_{x}]n_{x}(\beta(f)-\epsilon/2)}-1} = \lim_{c,N\to\infty}\frac{e^{cN(\beta(f) - \epsilon)}}{e^{cN(\beta(f)-\epsilon/2)}}=0.\]
So we have
\[\frac{\sum_{n=1}^{N}\sum_{x\in A_{n}}e^{cf^{n}}}{\sum_{n=1}^{N}\sum_{x\in B_{n}}e^{cf^{n}}} \stackrel{c,N \to \infty}{\to} 0 .\]

\bigskip
\bigskip

Now, given $g$, $c_{j}\to\infty$ and $N_{j}\to\infty$, such that exist $\lim_{j\to\infty}\pi_{c_{j},N_{j}}(g)$, we repeat the proof for $\mu_{c,s}$ and obtain an accumulation point $\pi_{\infty}$ such that
\[\lim_{j\to\infty}\pi_{c_{j},N_{j}}(g) = \pi_{\infty}(g).\]

The same is true for $\eta_{c,N}$.

\end{proof}

\textbf{Now we prove the Theorem \ref{ldt2}}

\begin{proof}

We just repeat the ideas used in the proof of Theorem \ref{ldt}.  We only need to prove that
\[\lim_{N/c\to 0} \frac{1}{c}\log\left( \sum_{n=1}^{N}\sum_{x\in Fix_{n}}e^{cf^{n}-nP(cf)}\frac{k^{n}}{n}   \right) = -\inf_{x\in k, \,x \in \text{Per}}I(x).\]

First we will show the lower inequality:
\[\liminf_{N/c\to 0} \frac{1}{c}\log\left( \sum_{n=1}^{N}\sum_{x\in Fix_{n}}e^{cf^{n}-nP(cf)}\frac{k^{n}}{n}   \right) \geq -\inf_{x\in k, \,x \in \text{Per}}I(x).\]
\newline
Consider a generic  point  $x\in k$ which is part of a periodic orbit $\{x,...,\sigma^{n_{x}-1}x\}$. For $N>>1$ we use that:
\begin{align*}
\sum_{n=1}^{N}\sum_{x\in Fix_{n}}e^{cf^{n}-nP(cf)}\frac{k^{n}}{n} &\geq
\sum_{\{x,...,\sigma^{(n_{x}-1)}x\}}e^{cf^{n_{x}}-n_{x}P(cf)}\frac{k^{n_{x}}}{n_{x}}\\
&= e^{cf^{n_{x}}(x)-n_{x}P(cf)}k^{n_{x}}(x)\\
&\geq  e^{cf^{n_{x}}(x)-n_{x}P(cf)}\\
&= e^{-cI(x)-n_{x}\epsilon_{c}}.
\end{align*}

From this follows that
\[
\liminf_{c,N \to \infty} \frac{1}{c}\log \left( \sum_{n=1}^{N}\sum_{x\in Fix_{n}}e^{cf^{n}-nP(cf)}\frac{k^{n}}{n}\right)\geq -I(x).\]

\bigskip

Now we will show the upper inequality
\[\limsup_{N/c \to 0} \frac{1}{c}\log \left( \sum_{n=1}^{N}\sum_{x\in Fix_{n}}e^{cf^{n}-nP(cf)}\frac{k^{n}}{n}\right)\leq-\inf_{x\in k, \, x \in \text{PER}}I(x).\]
We will denote the value $\inf_{x\in k, \, x \in \text{PER}}I(x)$ by $I$. Then:
\begin{align*}
\sum_{n=1}^{N}\sum_{x\in Fix_{n}}e^{cf^{n}-nP(cf)}\frac{k^{n}}{n} &\leq \sum_{n=1}^{N}\sum_{x\in Fix_{n}}e^{-cn\left(\beta(f)-\frac{f^{n}}{n}\right)-n\epsilon_{c}}\frac{k^{n}}{n}\\
&\leq \sum_{n=1}^{N}\sum_{x\in Fix_{n}}e^{-cI -n\epsilon_{c}}\frac{k^{n}}{n}\\
&\leq \sum_{n=1}^{N}e^{-cI -n\epsilon_{c}+n\log(d)}\\
&= e^{-cI}e^{ -\epsilon_{c}+\log(d)}\frac{e^{ -N\epsilon_{c}+N\log(d)}-1}{e^{ -\epsilon_{c}+\log(d)}-1}.
\end{align*}

It follows that
\[
\limsup_{N/c \to 0} \frac{1}{c}\log \left( \sum_{n=1}^{N}\sum_{x\in Fix_{n}}e^{cf^{n}-nP(cf)}\frac{k^{n}}{n}\right) \leq
-I - \frac{-N\epsilon_{c}+N\log(d)}{c} = -I.\]

\end{proof}

\bigskip

\end{document}